\newtheorem{theorem}{Theorem}[section]
\newtheorem{lemma}[theorem]{Lemma}
\newtheorem{assumption}[theorem]{Assumption}
\newtheorem{corollary}[theorem]{Corollary}
\newtheorem{example}[theorem]{Example}
\newtheorem{remark}[theorem]{Remark}
\numberwithin{equation}{section}
\newcommand{\eps}{\varepsilon}
\newcommand {\beq} {\begin{equation}}
\newcommand {\eeq} {\end{equation}}
\begin{document}

\title{Uniform error analysis of a rectangular Morley finite element method on a Shishkin mesh for a 4th-order singularly perturbed boundary value problem}

\author{
\normalsize Xiangyun Meng$^\dagger$,~~Martin Stynes$^\ast$\\ \normalsize $^\dagger$ School of Mathematics and Statistics,
Beijing Jiaotong University, \\
\normalsize Beijing 100044, China\\
\normalsize $^\ast$ Applied and Computational Mathematics Division, Beijing Computational Science Research Center,\\
\normalsize Beijing 100193, China\\\vspace{2mm}
\normalsize email: xymeng1@bjtu.edu.cn; ~~  m.stynes@csrc.ac.cn(corresponding author);\normalsize
}
\date{}

\maketitle

\begin{abstract}
The singularly perturbed reaction-diffusion problem
$\eps^2\Delta^2 u - \mathrm{div}\left(c\nabla u\right) = f$ is considered on
the unit square~$\Omega$ in~$\mathbb{R}^2$
with homogenous Dirichlet boundary conditions. Its solution typically contains boundary layers on
all sides of~$\Omega$. It is discretised by a finite element method that uses
rectangular Morley elements on a Shishkin mesh.
In an associated energy-type norm that is natural for this problem, we prove an
$O(\eps^{1/2}N^{-1}+\eps N^{-1}\ln N + N^{-3/2})$ rate of convergence
for the error in the computed solution,
where $N$~is the number of mesh intervals in each coordinate direction.
Thus in the most troublesome regime when $\eps \approx N^{-1}$,
our method is proved to attain an $O(N^{-3/2})$ rate of convergence,
which is shown to be sharp by our numerical experiments
and is superior to the $O(N^{-1/2})$ rate that is proved in
Meng \& Stynes, Adv. Comput. Math. 2019
when Adini finite elements are used to solve the same problem on the same mesh.
\end{abstract}

\section{Introduction}\label{sec:intro}
Set $\Omega:=(0,1)^2 \subset \mathbb{R}^2$, with closure~$\bar\Omega$ and boundary~$\partial\Omega$.
The bending of a simply supported plate is modelled \cite{Frankbook,GLN12,Semper92}
by the following 4th-order singularly perturbed boundary value problem
in which $u$ represents the deflection of the plate under the transverse loading $f$:
\begin{subequations}\label{BVP}
\begin{align}
\eps^2\Delta^2 u(x,y) - \mathrm{div}\left(c(x,y)\nabla u(x,y)\right)
	&= f(x,y) \quad \mbox{in}\ \Omega, \label{BVPeq}
\\
u(x,y) = \frac{\partial u(x,y)}{\partial n} &= 0 \quad\mbox{on}\ \partial\Omega, \label{BVPbdry}
\end{align}
\end{subequations}
where the parameter $\eps$ (which is the ratio of bending rigidity to tensile stiffness of the plate) satisfies $0<\eps\ll 1$, the function $c$ lies in $C(\bar\Omega)$ with  $c\ge c_0>0$ for some constant~$c_0$ and $f$~lies in $C^1(\bar\Omega)$.
The vector $n = (n_x,n_y)^T$ is the outward-pointing unit normal vector to the boundary $\partial\Omega$ of~$\Omega$,
so $\partial u(x,y)/\partial n$ denotes the normal derivative on~$\partial\Omega$.

During the past two decades, various finite element methods (FEMs) have ben proposed
for the numerical solution of 4th-order singularly perturbed problems.
Error analyses on quasi-uniform meshes using nonconforming FEMs are discussed in
\cite{ChenShi05,GLN12,Huang21,LiMing23,Winther01,Xie13,WangXuHu06},
and a $C^0$ interior penalty method in~\cite{Brenner11}.
Because it is troublesome to impose global $C^1$ continuity on the computed solution in the 2D domain~$\Omega$ for 4th-order problems,
the $C^0$ interior penalty method relaxes the global $C^1$ continuity by adding penalty terms \cite[eq.(2.3)]{Brenner11}  to a globally $C^0$ finite element space.
Similarly, the discontinuous Galerkin method  adds stabilisation terms \cite[(3.4) and (3.13)]{FKMC07}
to the finite element space without imposing global $C^0$~continuity.
Compared with these two approaches, nonconforming FEMs discard the global $C^1$~continuity condition but do not usually add any stabilisation terms.

The well-known nonconforming triangular Morley element for 4th-order problems
was constructed in \cite{Morley68} and appears in many textbooks  \cite{BrennerScottbook,Ciarletbook,GanesanTobiskabook,ShiWang}.
This element does not however yield convergence for 2nd-order boundary value problems
and is therefore unsuitable for our singularly perturbed problem~\eqref{BVP};
this issue is discussed in~\cite[Section~1]{Winther01}.
Thus to solve~\eqref{BVP}, various modifications of the triangular Morley  element
have been proposed \cite{Winther01,WangXuHu06}.

An alternative departure from the triangular Morley element is
the \emph{rectangular} Morley element which was introduced
in~\cite{WangShiXu07} and \cite[Section 2.7.1]{ShiWang}. It uses the same degrees of freedom
as the triangular Morley element, viz., the point value at each node
and the integral mean of the outer normal derivative on each edge.
But unlike its triangular counterpart, the rectangular Morley element is convergent for both 4th-order and 2nd-order problems, so it is reasonable to consider it for solving~\eqref{BVP},
as has already been pointed out in~\cite{Xie13}.

Note also that the rectangular Morley element has only eight degrees of freedom in each element,
which compares well with the twelve degrees of freedom of the nonconforming Adini element
and the sixteen d.o.f. of the conforming Bogner-Fox-Schmit element (these two finite elements are also suitable for solving the problem~\eqref{BVP}).
We shall use the rectangular Morley element method  to solve~\eqref{BVP}.

To solve \eqref{BVP} effectively, one must also use an appropriate mesh.
Since the exact solution of~\eqref{BVP} exhibits boundary layers (see Assumption~\ref{ass:S} below),
the mesh should be refined near~$\partial\Omega$.
Suitable meshes were used with an Adini nonconforming FEM in~\cite{MengStynes19},
a $C^0$ interior penalty method in~\cite{Franz14},
a mixed FEM in~\cite{Franz16} and a mixed $hp$-FEM in~\cite{Constantinou19}.
One sees easily that the meshes most suitable for~\eqref{BVP} are severely anisotropic,
so one cannot use results from the literature that assume quasi-uniformity of the mesh,
but mesh quasi-uniformity is assumed in many of the earlier papers that we mentioned earlier.
Consequently, for example, while Lemmas~\ref{consis1-1} and~\ref{consis2-1} below
were proved previously assuming mesh quasi-uniformity,  we have to prove them
on meshes without this assumption.

We shall use a tensor-product Shishkin mesh when solving~\eqref{BVP}.
In an energy-type norm that is natural for this problem,
we prove $O(\eps^{1/2}N^{-1}+\eps N^{-1}\ln N + N^{-3/2})$ convergence,
where $N$~is the number of mesh elements in each coordinate direction
and this convergence is uniform in the small parameter~$\eps$.
This result is much better than the $O(N^{-1/2})$ rate that is attained
on quasi-uniform meshes \cite[Corollary 4.1]{Xie13}.

Furthermore, comparing our Morley result with the nonconforming
Adini element studied in~\cite{MengStynes19},
in the most troublesome regime when $\eps \approx N^{-1}$,
our method achieves optimal $O(N^{-3/2})$ convergence,
which is better than the  $O(N^{-1/2})$ attained in~\cite{MengStynes19}; see Remark \ref{AvM}.

The paper is structured as follows.
Section~\ref{sec:decomp} gives a decomposition of the solution~$u$
of~\eqref{BVP} that is needed for the entire analysis.
The rectangular Morley element is defined in Section~\ref{sec:Morley}.
In Section~\ref{sec:discreteproblem} we define the discrete analogue of~\eqref{BVP} that is solved
by our FEM, and we derive various preliminary results for this discretisation.
The analysis in this section is valid on arbitrary tensor-product meshes.
The Shishkin mesh is defined in Section~\ref{sec:Shishkin}, then we continue our earlier analysis of the
numerical method, finally proving its convergence in Theorem~\ref{thm:cgce}.
The paper is completed by numerical experiments in Section~\ref{sec:numer}.
\\[2mm]

\noindent\emph{Notation.} Throughout the paper, $C$ denotes a generic constant that is independent of $\eps$
and of the mesh.  Standard Lebesgue spaces $L^p(\Omega')$ and Sobolev spaces $H^k(\Omega')$ are used,
where $\Omega'$ is any measurable subset of $\Omega$.
Their associated norms are $\|\cdot\|_{L^2(\Omega')}$, $\|\cdot\|_{H^k(\Omega')}$
and the associated semi-norm is $\left|\cdot\right|_{H^k(\Omega')}$.
The inner product in $L^2(\Omega')$ is denoted by $(\cdot,\cdot)_{\Omega'}$, but we omit the subscript when $\Omega' = \Omega$.

\subsection{A decomposition of the solution $u$}\label{sec:decomp}
If $c\in C(\bar\Omega)\subset L^2(\Omega)$ and $f\in C^1(\bar\Omega)\subset L^2(\Omega)$,
then (cf.~\cite[Theorem~5.1]{GanesanTobiskabook}) one can use the Lax-Milgram lemma to show that   \eqref{BVP} has a unique solution in~$H^2(\Omega)$,
but for our analysis we need more precise information about the behaviour of~$u$,
especially near the boundary~$\partial\Omega$.

Number the sides of $\bar\Omega$ in clockwise order as $1,2,3,4$, where $1$ is associated with the interval $[0,1]$ on the $x$-axis. Then denote the corner where the sides $i$ and~$j$ meet by  $(i;j)$ with $i<j$.
As in the papers \cite{Franz16,MengStynes19}, we make the following assumption.

\begin{assumption}\label{ass:S}
The solution $u$ of the boundary value problem~\eqref{BVP} can be decomposed as
\begin{align*}
u(x,y) &= S(x,y) + \sum_{i = 1}^4 E_i(x,y) + E_{12}(x,y) + E_{23}(x,y) + E_{34}(x,y) + E_{14}(x,y)\\
&:= S(x,y) + u^E(x,y)
\end{align*}
for $(x,y)\in\bar\Omega$,
where $S(x,y)$ is a smooth function,
each $E_i(x,y)$  is a boundary layer component associated with the side $i$ of~$\bar\Omega$,
and each $E_{ij}(x,y)$ is a corner layer component associated with the corner~$(i;j)$, i.e.,
there exists a constant $C$ such that for all $(x,y)\in\overline{\Omega}$ and $0\le i+j\le 4$ one has
\begin{subequations}\label{apriori}
\begin{align}
\left|\frac{\partial^{i+j}S(x,y)}{\partial x^i\partial y^j}\right|&\le C, \label{priS}\\
\left|\frac{\partial^{i+j}E_1(x,y)}{\partial x^i\partial y^j}\right|&\le C\eps^{1-j}e^{-y/\eps}, \label{priE1}\\
\left|\frac{\partial^{i+j}E_{12}(x,y)}{\partial x^i\partial y^j}\right|&\le C\eps^{1-i-j}e^{-x/\eps}e^{-y/\eps}, \label{priE12}
\end{align}
\end{subequations}
with analogous bounds for the remaining components of the decomposition.
\end{assumption}

\section{The rectangular Morley element on anisotropic meshes}\label{sec:Morley}
For the moment we use an arbitrary tensor product mesh on~$\Omega$.
In Section~\ref{sec:Shishkin} we shall specialise our analysis to a suitable Shishkin mesh.

Let $N$ be a positive integer.
Let $0 = x_0 < x_1  \dots < x_N = 1$ and $0 = y_0 < y_1  \dots < y_N = 1$ be arbitrary meshes
in the interval $[0,1]$ on the $x$ and $y$ axes respectively.
Draw vertical and horizontal lines through these points to partition $\bar\Omega$ into mesh rectangular elements with
vertices $\{(x_i,y_j): 0\le i,j\le N\}$. We write $\mathcal{T}_N$ for the union of these mesh rectangles, so  $\mathcal{T}_N$ is a partition of~$\bar\Omega$.

Let $K$ be a typical mesh rectangle as shown in Figure~\ref{fig:DOF}.
Its vertices are $a_1, a_2, a_3, a_4$ and its edges are $e_1, e_2, e_3,e_4$ with unit normal vectors $n_{e_j}$ for $j=1,2,3,4$.

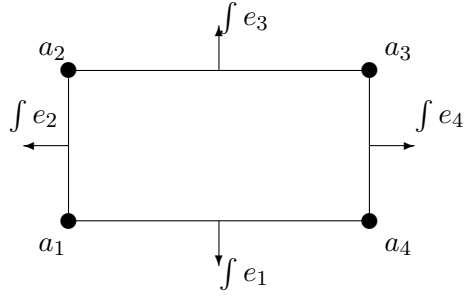
\begin{figure}[htbp]\unitlength0.5pt
\begin{center}
\setlength{\unitlength}{2cm}
\begin{picture}(2,2)

\put(0,0.5){\line(1,0){2}}
\put(0,0.5){\line(0,1){1}}
\put(2,0.5){\line(0,1){1}}
\put(0,1.5){\line(1,0){2}}

\put(0,0.5){\circle*{0.1}}
\put(0,1.5){\circle*{0.1}}
\put(2,0.5){\circle*{0.1}}
\put(2,1.5){\circle*{0.1}}

\put(1,0.5){\vector(0,-1){0.3}}
\put(1,1.5){\vector(0,1){0.3}}
\put(0,1){\vector(-1,0){0.3}}
\put(2,1){\vector(1,0){0.3}}

\put(1,0.1){$\int e_1$}
\put(1,1.8){$\int e_3$}
\put(-0.4,1.15){$\int e_2$}
\put(2.3,1.15){$\int e_4$}

\put(-0.2,0.3){$a_1$}
\put(2.1,0.3){$a_4$}
\put(2.1,1.6){$a_3$}
\put(-0.2,1.6){$a_2$}

\linethickness{0.6mm}
\end{picture}
\end{center}
\caption{degrees of freedom on element $K$}
\label{fig:DOF}
\end{figure}

Let $(x_{c},y_{c})$ be the center of $K$ and suppose $K$ has dimensions $2h_x \times 2h_y$.
Then the coordinates of the vertices $a_j$ are $a_1=(x_c-h_x,y_c-h_y)$, $a_2=(x_c-h_x,y_c+h_y)$,
$a_3=(x_c+h_x,y_c+h_y)$, $a_4=(x_c+h_x,y_c-h_y)$.
For $j=1,2,3,4$, let  $|e_j|$ denote the length of edge $e_j$ and write $n_{e_j}$ for the outward-pointing unit normal to $e_j$.

We now describe the rectangular Morley element of~ \cite{ShiXieNMPDE10,ShiWang,WangShiXu07}.
For each mesh rectangle~$K$, let $P(K)$ denote the span of $\{1,x,y,x^2,xy,y^2,x^3,y^3\}$.

Define a set of 8 functions in $P(K)$ as follows:
\begin{equation*}
\begin{aligned}
&p_1(x,y) = \frac{1}{8}\left[2\left(1-\frac{x-x_{c}}{h_x}\right)\left(1-\frac{y-y_{c}}{h_y}\right)
+\left(\frac{x-x_{c}}{h_x}\right)^3-\frac{x-x_{c}}{h_x}
+\left(\frac{y-y_{c}}{h_y}\right)^3-\frac{y-y_{c}}{h_y}\right],\\
&p_2(x,y)  = \frac{1}{8}\left[2\left(1-\frac{x-x_{c}}{h_x}\right)\left(1+\frac{y-y_{c}}{h_y}\right)
+\left(\frac{x-x_{c}}{h_x}\right)^3-\frac{x-x_{c}}{h_x}
-\left(\frac{y-y_{c}}{h_y}\right)^3+\frac{y-y_{c}}{h_y}\right],\\
&p_3(x,y)  = \frac{1}{8}\left[2\left(1+\frac{x-x_{c}}{h_x}\right)\left(1+\frac{y-y_{c}}{h_y}\right)
-\left(\frac{x-x_{c}}{h_x}\right)^3+\frac{x-x_{c}}{h_x}
-\left(\frac{y-y_{c}}{h_y}\right)^3+\frac{y-y_{c}}{h_y}\right],\\
&p_4(x,y)  = \frac{1}{8}\left[2\left(1+\frac{x-x_{c}}{h_x}\right)\left(1-\frac{y-y_{c}}{h_y}\right)
-\left(\frac{x-x_{c}}{h_x}\right)^3+\frac{x-x_{c}}{h_x}
+\left(\frac{y-y_{c}}{h_y}\right)^3-\frac{y-y_{c}}{h_y}\right],\\
&q_1(x,y)  = -\frac{h_y}{4}\left(\frac{y-y_{c}}{h_y}+1\right)\left(\frac{y-y_{c}}{h_y}-1\right)^2,
\ \  q_2(x,y) = -\frac{h_x}{4}\left(\frac{x-x_{c}}{h_x}+1\right)\left(\frac{x-x_{c}}{h_x}-1\right)^2,\\
&q_3(x,y)  = \frac{h_y}{4}\left(\frac{y-y_{c}}{h_y}+1\right)^2\left(\frac{y-y_{c}}{h_y}-1\right),
\ \  q_4(x,y) = \frac{h_x}{4}\left(\frac{x-x_{c}}{h_x}+1\right)^2\left(\frac{x-x_{c}}{h_x}-1\right).
\end{aligned}
\end{equation*}
One can verify (see~\cite[eq.(2.5)]{MengYangZhangSCM16}) that
\begin{equation}\label{nodeedge}
\begin{cases}
p_i(a_j)=\delta_{ij} \ \text{ and }\ \frac{1}{|e_j|}\int_{e_j}\frac{\partial p_i}{\partial {n}_{e_j}}\mathrm{d}s=0
	&\text{ for }i,j=1,2,3,4,\\
q_i(a_j)=0\ \text{ and }\ \frac{1}{|e_j|}\int_{e_j}\frac{\partial q_i}{\partial {n}_{e_j}}\mathrm{d}s=\delta_{ij}
	&\text{for }i,j=1,2,3,4,
\end{cases}
\end{equation}
where $\delta_{ij}$ is the Kronecker symbol.
Hence the $p_i$ and $q_j$ are linearly independent and form a basis for~$P(K)$.

The \emph{rectangular Morley element space $V_N$} is now defined to be the set of functions $v \in L^2(\Omega)$
such that  $v|_K \in P(K)$ for each mesh rectangle~$K$,
$v$~is continuous at all mesh vertices in~$\Omega$ as one moves from one mesh rectangle to another,
$v$~vanishes at all vertices in~$\partial\Omega$,
and for all $K$ one has $\int_e \left[\frac{\partial v}{\partial n_e}\right]\,\mathrm{d}s =0$ for all edges~$e$ of~$K$
where $n_e$~is the unit normal to~$e$ that points out of~$K$;
here $\left[\frac{\partial v}{\partial n_e}\right]$ denotes the jump in $\frac{\partial v}{\partial n_e}$ as one moves
from one side of~$e$ to the other side (i.e., from one mesh rectangle to another) if  $e\not\subset\partial\Omega$,
while $\left[\frac{\partial v}{\partial n_e}\right] = \frac{\partial v}{\partial n}$ if $e\subset\partial\Omega$.

\section{The discrete problem and its error analysis}\label{sec:discreteproblem}
In this section we discretise \eqref{BVP} using a finite element method based on the rectangular Morley element
and begin the error analysis of this method.

\subsection{Discretisation of \eqref{BVP}}\label{sec:discretisation}
Any function $v_N\in V_N$ may be only piecewise continuous on~$\Omega$ so one cannot apply
differential operators (such as $\nabla$ and $\Delta$) to $v_N$ globally. Thus, we apply these operators
only piecewise on the interiors of the elements $K\in \mathcal{T}_N$ and indicate this modification by writing
$\nabla_N$ and $\Delta_N$ instead of $\nabla$ and $\Delta$, e.g., $(\nabla_N v_N)|_K  = \nabla (v_N|_K)$.
For $v_N, w_N\in V_N$, define the bilinear forms
$b_N(v_N,w_N):=(c\nabla_N v_N,\nabla_N w_N)$ and $a_N(v_N,w_N):=(\Delta_N v_N,\Delta_N w_N)$.
We shall also use expressions like $b_N(u,w_N)$ and $a_N(u,w_N)$, which are defined in the same way.

One can write \eqref{BVP} in the equivalent weak form
\[
\eps^2(\Delta u, \Delta v) + \left(c\nabla u, \nabla v\right) =  (f,v) \ \text{ for all } v \in H_0^2(\Omega),
\]
where as usual $H_0^2(\Omega) = \big\{g\in H^2(\Omega): g|_{\partial\Omega} = \frac{\partial g}{\partial n}|_{\partial\Omega} = 0\big\}$.

Then our discretisation of~\eqref{BVP} is: Find $u_N \in V_N$ such that
\begin{equation}
\eps^2 a_N(u_N,v_N)+b_N(u_N,v_N)=(f,v_N)_{L^2(\Omega)}\;\quad  \forall v_N \in V_N. \label{BVP_FEM}
\end{equation}
Existence and uniqueness of the solution $u_N$ of \eqref{BVP_FEM} follows readily from the Lax-Milgram lemma.

\subsection{Preliminaries}\label{sec:prelim}

For each $K\in{\mathcal T}_N$,
let $P_0(K)$ be the space of constant functions defined on~$K$ and
let $P_K^0: L^2(K)\rightarrow P_0(K)$ be the $L^2$ projector, viz.,
\begin{equation*}
(v,w_N)_K = (P_K^0 v,w_N)_{K}\quad \forall v\in L^2(K), \, \forall w_N\in P_0(K).
\end{equation*}
From~\cite[Lemma~7.5]{Georgoulis06}, for all $v\in H^1(K)$ one has the anisotropic error bound
\begin{align}\label{L2pro}
\left\|v-P_K^0v \right\|_{L^2(K)}^2\le C\left(h_x^2\left\|\frac{\partial v}{\partial x} \right\|_{L^2(K)}^2+h_y^2\left\|\frac{\partial v}{\partial y} \right\|_{L^2(K)}^2\right),
\end{align}
with $C$ independent of $v$ and $K$.

Let $Q_1(K)$ denote the usual space of polynomials of the form $c_0+c_1x+c_2y+c_3xy$ defined on~$K$.
Set $Q_N=\big\{v_N\in H^1(\Omega): v_N|_K\in Q_1(K) \,\forall K\in \mathcal{T}_N\big\}$.
Let $I_N: C(K)\rightarrow Q_N$ denote the standard piecewise bilinear nodal interpolation operator,
viz., given $v\in C(K)$, for each vertex $P$ of $K$ we set $I_N v(P) = v(P)$.
Then for each $K$ and all $v\in H^1(K) \cap C(K)$, by \cite[Theorem 2.7]{Apelbook} one has
\begin{align}\label{L2inter}
\left\|v-I_Nv \right\|_{L^2(K)}^2\le C\left(h_x^2\left\|\frac{\partial v}{\partial x} \right\|_{L^2(K)}^2+h_y^2\left\|\frac{\partial v}{\partial y} \right\|_{L^2(K)}^2\right)
\end{align}
and
\begin{align}\label{H1stab}
\left\|I_N v \right\|_{H^1(K)}\le C\left\|v \right\|_{H^1(K)}.
\end{align}

By applying a standard scaling argument to the $x$ and $y$ variables separately,
one can modify the proof of \cite[Lemma~4.5.3]{BrennerScottbook}
to obtain the following anisotropic inverse inequalities:
for each $K\in{\mathcal T}_N$ and any $\xi_N \in P(K)$, one has
\begin{subequations}
\begin{align}
h_x\left\|\frac{\partial^2 \xi_N}{\partial x^2}\right\|_{L^2(K)}
	+ h_y\left\|\frac{\partial^2 \xi_N}{\partial x\partial y}\right\|_{L^2(K)}
		&\le C\left\|\frac{\partial \xi_N}{\partial x}\right\|_{L^2(K)},\label{inverse1}\\
h_y\left\|\frac{\partial^2 \xi_N}{\partial y^2}\right\|_{L^2(K)}
	+ h_x\left\|\frac{\partial^2 \xi_N}{\partial x\partial y}\right\|_{L^2(K)}
		&\le C\left\|\frac{\partial \xi_N}{\partial y}\right\|_{L^2(K)},\label{inverse2}
\end{align}
\end{subequations}
where the constants $C$ are independent of $K$ and $\xi_N$.

\subsection{The error equation}
Let $v_N\in V_N$ be arbitrary. Set $\xi_N = u_N-v_N$ and $\eta_N = u-v_N$.
Then \eqref{BVP_FEM} and \eqref{BVP} yield
\begin{align*}
\eps^2 a_N(\xi_N,\xi_N) &+ b_N(\xi_N,\xi_N)  \notag \\
& = \eps^2 a_N(\eta_N,\xi_N)+\eps^2 a_N(u_N-u,\xi_N) + b_N(\eta_N,\xi_N) + b_N(u_N-u,\xi_N)  \notag \\
&= \eps^2 a_N(\eta_N,\xi_N) + b_N(\eta_N,\xi_N)+(\eps^2\Delta^2 u
	- \mathrm{div}(c\nabla u), I_N \xi_N)\notag \\
&\qquad+(f,\xi_N-I_N \xi_N)-\eps^2 a_N(u,\xi_N)-b_N(u,\xi_N).
\end{align*}
Integrating by parts and using $I_N\xi_N=0$ on $\partial\Omega$,
as in \cite[(3.4)]{HuShiYang16} we get
\[
(\Delta^2 u,I_N\xi_N)= -(\nabla\Delta u,\nabla (I_N \xi_N) )  \text{ and }
-(\mathrm{div}(c\nabla u),I_N\xi_N)-b_N(u, \xi_N)=b_N(u,I_N \xi_N-\xi_N).
\]
Integration by parts on each $K\in \mathcal{T}_N$ gives, like \cite[(3.5)]{HuShiYang16},
\begin{align*}
&a_N(u,\xi_N)=\sum_{K\in \mathcal{T}_N}(\Delta u,\Delta \xi_N)_{L^2(K)}\notag\\
&\quad= -\sum_{K\in \mathcal{T}_N}(\nabla\Delta u,\nabla \xi_N)_{L^2(K)}+\sum_{K\in \mathcal{T}_N}\int_{\partial K}\frac{\partial^2 u}{\partial n^2}\frac{\partial \xi_N}{\partial n}\;\mathrm{d}s
+\sum_{K\in \mathcal{T}_N}\int_{\partial K}\frac{\partial ^2u}{\partial s\partial n}\frac{\partial \xi_N}{\partial s}\;\mathrm{d}s,
\end{align*}
where $\partial/\partial s$ and $\partial/\partial n$ are, respectively, the tangential and
outward-pointing normal derivatives
along the boundary $\partial K$.
From these calculcations we get the \emph{error equation}
\begin{align}
\eps^2 a_N(\xi_N,\xi_N) &+ b_N(\xi_N,\xi_N)  \notag\\
 &= \eps^2 a_N(\eta_N,\xi_N) + b_N(\eta_N,\xi_N)-b_N(u,\xi_N-I_N \xi_N)+(f,\xi_N-I_N \xi_N)\notag\\
&\qquad-\eps^2\left[\sum_{K\in \mathcal{T}_N}\int_{\partial K}\frac{\partial^2 u}{\partial n^2}\frac{\partial \xi_N}{\partial n}\;\mathrm{d}s+\sum_{K\in \mathcal{T}_N}\int_{\partial K}\frac{\partial ^2u}{\partial s\partial n}\frac{\partial \xi_N}{\partial s}\;\mathrm{d}s\right]\notag\\
&\qquad+\eps^2\sum_{K\in \mathcal{T}_N}\int_{K} (\nabla\Delta u) \nabla (\xi_N-I_N \xi_N)\;\mathrm{d}x\,\mathrm{d}y. \label{errorequation}
\end{align}

\begin{remark}
The error equation~\eqref{errorequation} is closely related to~\cite[(3.7)]{Xie13};
see also~\cite[(3.6)]{HuShiYang16}.
The analysis of~\eqref{errorequation} that we now develop
has some similarity to the analysis in~\cite{Xie13},
but our work is greatly complicated (compared with~\cite{Xie13})
by the extreme anisotropy of the Shishkin mesh
and by our aim of proving error bounds
that show convergence even when $\eps$~is very small;
in~\cite{Xie13} the mesh is shape-regular and the bounds established in
Theorems~4.1 and~4.2 blow up if one takes the limit as~$\eps\to 0$,
while the bound of \cite[Theorem~4.3]{Xie13} (which remains valid as $\eps\to 0$)
is only $O(N^{-1/2})$, which is inferior to the $O(N^{-3/2})$ bound
that we shall prove in Theorem~\ref{thm:cgce}.
\end{remark}

\subsection{Bounds on error equation integrals along $\partial K$}
In this subsection we bound the terms
$
\eps^2\sum_{K\in \mathcal{T}_N}\int_{\partial K}\frac{\partial^2 u}{\partial n^2}\frac{\partial \xi_N}{\partial n}\;\mathrm{d}s$
and
$\eps^2\sum_{K\in \mathcal{T}_N}\int_{\partial K}\frac{\partial ^2u}{\partial s\partial n}\frac{\partial \xi_N}{\partial s}\;\mathrm{d}s$
from the error equation \eqref{errorequation}.

For all $v_N\in V_N$, define $|v_N|_{1,N}^2:=(\nabla_N v_N,\nabla_N v_N), \, |v_N|_{2,N}^2:=(\Delta_N v_N,\Delta_N v_N)$
and the norm $\|v_N\|_{\eps,N}^2:=\eps^2|v_N|_{2,N}^2+|v_N|_{1,N}^2$.
One can define $|u|_{1,N}, |u|_{2,N}$ and $\|u\|_{\eps,N}$ in the same way.

Let the edges of each $K\in \mathcal{T}_N$ be labelled $e_i$ for $i=1,\dots,4$ as in Figure~\ref{fig:DOF}.
For each $w\in L^2(e_i)$ define
\[
\Pi^0_{e_i}w=\frac{1}{|e_i|}\int_{e_i}w\,\mathrm{d}s
\ \text{ and }\
\mathcal{R}^0_{e_i}w=w-\Pi^0_{e_i}w.
\]

The argument used in the next lemma imitates in part the proof of \cite[Lemma~3.2]{HuShiYang16},
where only a uniform mesh was considered.

\begin{lemma}\label{consis1-1}
Assume that $w\in H_0^2(\Omega)\cap H^4(\Omega)$.
Then there exists a constant $C$ such that for all $\xi_N\in V_N$, one has
\begin{align}
&\left|\sum_{K\in \mathcal{T}_N}\int_{\partial K}
	\frac{\partial^2 w}{\partial n^2}\frac{\partial \xi_N}{\partial n}\;\mathrm{d}s\right| \notag\\
&\qquad\le C \left[\left(\sum_{K\in \mathcal{T}_N}h_x^4\left\|\frac{\partial^4 w}{\partial x\partial y^3}\right\|_{L^2(K)}^2\right)^{1/2}
	+\left(\sum_{K\in \mathcal{T}_N}h_y^4\left\|\frac{\partial^4 w}{\partial x^3\partial y}\right\|_{L^2(K)}^2\right)^{1/2}\right]
		|\xi_N|_{2,N}  \label{consis1-1-eq}
\intertext{and}
&\left|\sum_{K\in \mathcal{T}_N}\int_{\partial K}
	\frac{\partial^2 w}{\partial n^2}\frac{\partial \xi_N}{\partial n}\;\mathrm{d}s\right| \notag\\
&\qquad\le C \left[\sum_{K\in \mathcal{T}_N}\left(h_x^2\left\|\frac{\partial^4 w}{\partial x\partial y^3}\right\|_{L^2(K)}^2\right)^{1/2}+\sum_{K\in \mathcal{T}_N}\left(h_y^2\left\|\frac{\partial^4 w}{\partial x^3\partial y}\right\|_{L^2(K)}^2\right)^{1/2}\right] |\xi_N|_{1,N}. \label{consis1-2-eq}
\end{align}
\end{lemma}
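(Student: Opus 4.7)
The plan is to follow the standard consistency-error strategy for nonconforming elements, taking care of the mesh anisotropy. First, $\partial^2 w/\partial n^2$ is insensitive to the orientation of $n$ (an even-order normal derivative), and the membership requirement of $V_N$ gives $\int_e[\partial\xi_N/\partial n_e]\,\mathrm ds=0$ on every edge $e$ of $\mathcal T_N$ (the ``jump'' being understood as the trace on boundary edges, per the definition of $V_N$). Hence I rewrite
\[
\sum_{K\in\mathcal T_N}\int_{\partial K}\frac{\partial^2 w}{\partial n^2}\frac{\partial\xi_N}{\partial n}\,\mathrm ds=\sum_{e}\int_{e}\mathcal{R}^0_{e}\!\left(\frac{\partial^2 w}{\partial n_e^2}\right)\left[\frac{\partial\xi_N}{\partial n_e}\right]\,\mathrm ds,
\]
the Morley moment condition absorbing the edge mean of the smooth factor.

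Next, bound each edge contribution by Cauchy--Schwarz. For a horizontal edge $e$ of $K$ with $n_e=\hat y$, the discrete factor is completely explicit: $P(K)$ contains $xy$ as its only bilinear monomial, so $\partial\xi_N|_K/\partial y$ is affine in $x$ on $e$ with mean-zero part $\bigl(\partial^2\xi_N/(\partial x\partial y)\bigr)|_K\,(x-x_c)$, and a direct calculation yields
\[
\bigl\|\mathcal{R}^0_{e}(\partial\xi_N|_K/\partial y)\bigr\|_{L^2(e)}\le C\,h_x h_y^{-1/2}\,\bigl\|\partial^2\xi_N/(\partial x\partial y)\bigr\|_{L^2(K)}.
\]
For the smooth factor $\phi:=\partial^2 w/\partial y^2$, exploit that $\mathcal{R}^0_{e}$ annihilates any function of $y$ alone to subtract the $x$-average $\bar\phi^{\,x}(y):=|e|^{-1}\int_{x_c-h_x}^{x_c+h_x}\phi(x,y)\,\mathrm dx$, then apply an anisotropic trace on $K$ to $\phi-\bar\phi^{\,x}$ and close by invoking \eqref{L2pro} on $\phi-\bar\phi^{\,x}$ and on its $y$-derivative, so that the factors of $h_x^2$ from tangential Poincar\'e and of $h_y$ from the normal direction combine into the target $h_x^2 h_y\,\|\partial^4 w/(\partial x\partial y^3)\|_{L^2(K)}^2$-type estimate. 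Vertical edges are treated symmetrically and produce the $\partial^4 w/(\partial x^3\partial y)$ contribution.

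Summing the element-by-edge estimates closes the argument. For \eqref{consis1-1-eq}, a discrete Cauchy--Schwarz pairs $\bigl(\sum_K h_x^4\,\|\partial^4 w/(\partial x\partial y^3)\|_{L^2(K)}^2\bigr)^{1/2}$ (plus the analogous $h_y^4$-term) with $\bigl(\sum_K\|\partial^2\xi_N/(\partial x\partial y)\|_{L^2(K)}^2\bigr)^{1/2}$; the latter is bounded by $C|\xi_N|_{2,N}$ through the discrete $H^2$-identity that, for elements of $V_N$ with vanishing nodal boundary values and Morley moment conditions, controls the mixed second derivative by the discrete Laplacian (the conforming analogue being the standard integration-by-parts identity in $H^2_0$). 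For \eqref{consis1-2-eq}, the inverse estimates \eqref{inverse1}--\eqref{inverse2} applied at the element level convert $\|\partial^2\xi_N/(\partial x\partial y)\|_{L^2(K)}$ into $h^{-1}$-scaled first-derivative norms of $\xi_N$; the $h$-powers match those of the smooth factor and a triangle inequality assembles the $\ell^1$-form bound multiplied by $|\xi_N|_{1,N}$.

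The main obstacle is producing a $\|\mathcal{R}^0_{e}(\partial^2 w/\partial y^2)\|_{L^2(e)}^2$ bound in which only the fourth derivatives of $w$ appear, with the correct $h_x,h_y$ weights. A direct tangential Poincar\'e plus anisotropic trace leaves behind an unwanted $h_x^2 h_y^{-1}\|\partial^3 w/(\partial x\partial y^2)\|_{L^2(K)}^2$ contribution; removing it requires the preliminary subtraction of $\bar\phi^{\,x}(y)$ together with a careful use of \eqref{L2pro} in the tangential $x$-direction, so that the remaining $L^2(K)$-norm of $\partial_y(\phi-\bar\phi^{\,x}) = \partial_y\phi - \overline{\partial_y\phi}^{\,x}$ can be converted by a 1D Poincar\'e in $x$ into a clean norm of $\partial^4 w/(\partial x\partial y^3)$. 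The secondary difficulty is the elementwise-to-global passage $\bigl(\sum_K\|\partial^2\xi_N/(\partial x\partial y)\|_{L^2(K)}^2\bigr)^{1/2}\le C|\xi_N|_{2,N}$ on an anisotropic mesh, which must be verified through the inverse inequalities \eqref{inverse1}--\eqref{inverse2} rather than by any quasi-uniform argument as in~\cite{HuShiYang16}.
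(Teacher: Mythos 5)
Your opening rewrite $\sum_K\int_{\partial K}\frac{\partial^2 w}{\partial n^2}\frac{\partial\xi_N}{\partial n}\,\mathrm ds=\sum_{e}\int_{e}\mathcal{R}^0_{e}(\frac{\partial^2 w}{\partial n_e^2})[\frac{\partial\xi_N}{\partial n_e}]\,\mathrm ds$ is correct, and your estimate for the discrete factor, $\|\mathcal{R}^0_e(\partial\xi_N/\partial y)\|_{L^2(e)}\le Ch_xh_y^{-1/2}\|\partial^2\xi_N/(\partial x\partial y)\|_{L^2(K)}$ on a horizontal edge, is also correct and matches \eqref{part1} in the paper (with the roles of $x,y$ swapped). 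The fatal problem is the smooth-factor estimate you claim next, namely a per-edge bound of the form $\|\mathcal{R}^0_e(\partial^2 w/\partial y^2)\|_{L^2(e)}^2\le Ch_x^2h_y\|\partial^4 w/(\partial x\partial y^3)\|_{L^2(K)}^2$. This inequality is false. Take $w=\tfrac12 x^2y^2$ locally near a horizontal edge $e$: then $\partial^2 w/\partial y^2=x^2$, so $\mathcal{R}^0_e(\partial^2 w/\partial y^2)=x^2-\Pi^0_e(x^2)\neq 0$ on $e$, while $\partial^4 w/(\partial x\partial y^3)\equiv 0$. Your own diagnostic is accurate: a trace estimate on $K$ applied to $\phi-\bar\phi^x$ produces
$h_y^{-1}\|\phi-\bar\phi^x\|_{L^2(K)}^2+h_y\|\partial_y(\phi-\bar\phi^x)\|_{L^2(K)}^2
\lesssim h_y^{-1}h_x^2\|\partial_x\phi\|_{L^2(K)}^2+h_yh_x^2\|\partial_x\partial_y\phi\|_{L^2(K)}^2$,
and the first term, which carries a third derivative and an $h_y^{-1}$ factor, does not disappear after the $\bar\phi^x$ subtraction: that subtraction only makes $\partial_y(\phi-\bar\phi^x)=\partial_y\phi-\overline{\partial_y\phi}^x$ nice, it does not alter the $L^2(K)$ term of the trace inequality. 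On a Shishkin layer region $h_y\approx\eps N^{-1}\ln N$, so the $h_y^{-1}$ factor is exactly the kind of $\eps$-blow-up the lemma is designed to exclude.

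The paper avoids this obstruction by grouping terms per \emph{element}, not per \emph{edge}. Writing $J_1,\dots,J_4$ for the four edge contributions of a fixed $K$, the paper treats the two opposite edges $J_2+J_4$ (vertical) and $J_1+J_3$ (horizontal) together. The key algebraic fact, special to $P(K)=\mathrm{span}\{1,x,y,x^2,xy,y^2,x^3,y^3\}$, is that
$(\mathcal{R}^0_{e_2}\partial_x\xi_N)(x_c-h_x,y)=(\mathcal{R}^0_{e_4}\partial_x\xi_N)(x_c+h_x,y)$
for all $y$; this lets one factor that common function out and be left with the \emph{difference} $\partial^2_{xx}w(x_c+h_x,\cdot)-\partial^2_{xx}w(x_c-h_x,\cdot)$, which already carries one extra $x$-derivative. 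A further $\mathcal{R}^0_{e}$ in the tangential ($y$) direction then delivers the fourth derivative $\partial^4 w/(\partial x^3\partial y)$ with the clean weights $h_xh_y^2$, with no negative powers of $h_x$ or $h_y$. Your edge-jump grouping (pairing the two traces across one internal edge, rather than the two opposite edges of one element) cannot see this element-level cancellation, which is why your smooth-factor bound stalls at third derivatives with an inverse mesh factor. In the $w=\tfrac12 x^2y^2$ example the per-element sum $J_1+J_3$ vanishes identically because $\partial^2 w/\partial y^2$ is independent of $y$, whereas each per-edge term does not vanish: the cancellation is real and it lives at the element level.

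To repair the argument you would need to abandon the edge-jump decomposition and instead follow the paper's device: fix $K$, pair its two opposite edges, use the $P(K)$-structure to pull out the common mean-zero trace of $\partial_n\xi_N$, and realise the remaining smooth factor as a double difference of $\partial^2_{nn}w$ (one difference across $K$, one in the tangential direction). This is precisely \cite[Theorem~4.1]{ShiMaoChen05} and \cite[Lemma~3.2]{HuShiYang16} extended anisotropically, and it is the mechanism that makes the bound uniform in the aspect ratio. A secondary point, shared with the paper and not itself a gap: the passage from $(\sum_K\|\partial^2_{xy}\xi_N\|_{L^2(K)}^2)^{1/2}$ to $C|\xi_N|_{2,N}$ relies on the norm equivalence of the broken $H^2$-seminorm and the piecewise Laplacian on the rectangular Morley space; this should be cited rather than asserted as a ``discrete $H^2$-identity'', since it is not true element-by-element.
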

\begin{proof}
For any internal edge~$e_i$,  the edge-jump condition in the definition of~$V_N$
implies that $\Pi^0_{e_i}\frac{\partial \xi_N}{\partial n}$ has the same magnitude but different signs
when calculated on the two adjacent mesh rectangles~$K$ that share~$e_i$.
Consequently $\sum_{K\in \mathcal{T}_N}\sum_{i=1}^4\int_{e_i}\frac{\partial^2 w}{\partial n^2}
	\left(\Pi^0_{e_i}\frac{\partial \xi_N}{\partial n}\right)\;\mathrm{d}s =0$,
as in these integrals $\frac{\partial^2 w}{\partial n^2}$ depends on~$e_i$ on each internal edge  but not on the associated~$K$,
while if $e_i\subset\partial\Omega$ then $\frac{\partial \xi_N}{\partial n}=0$.
Hence
\begin{align}
\sum_{K\in \mathcal{T}_N}\int_{\partial K}\frac{\partial^2 w}{\partial n^2}\frac{\partial \xi_N}{\partial n}\;\mathrm{d}s
&= \sum_{K\in \mathcal{T}_N}\sum_{i=1}^4\int_{e_i}\frac{\partial^2 w}{\partial n^2}
	\frac{\partial \xi_N}{\partial n}\;\mathrm{d}s \notag\\
&= \sum_{K\in \mathcal{T}_N}\sum_{i=1}^4\int_{e_i}\frac{\partial^2 w}{\partial n^2}
	\left(\mathcal{R}^0_{e_i}\frac{\partial \xi_N}{\partial n}\right)\;\mathrm{d}s
=: \sum_{K\in \mathcal{T}_N}\sum_{i=1}^4J_i,\ \text{say.}  \label{Ji}
\end{align}
	
We now imitate the proof of  \cite[Theorem~4.1]{ShiMaoChen05} by considering $|J_2+J_4|$
and also borrow some techniques from \cite[Lemma~3.2]{HuShiYang16}.
For any $K\in \mathcal{T}_N$, from Figure~\ref{fig:DOF} one sees that
$\frac{\partial \xi_N}{\partial n}=-\frac{\partial \xi_N}{\partial x}$ on $e_2$,
$\frac{\partial \xi_N}{\partial n}=\frac{\partial \xi_N}{\partial x}$ on $e_4$
and $\frac{\partial^2 w}{\partial n^2} = \frac{\partial^2 w}{\partial x^2}$ on $e_2$ and $e_4$.
Thus
\begin{align}
J_2+J_4&=\int_{e_2}\frac{\partial^2 w}{\partial n^2}
	\mathcal{R}^0_{e_2}\frac{\partial \xi_N}{\partial n}\;\mathrm{d}s
		+\int_{e_4}\frac{\partial^2 w}{\partial n^2}
		\mathcal{R}^0_{e_4}\frac{\partial \xi_N}{\partial n}\;\mathrm{d}s  \notag\\
&=-\int_{e_2}\frac{\partial^2 w}{\partial x^2}
	\mathcal{R}^0_{e_2}\frac{\partial \xi_N}{\partial x}\;\mathrm{d}y
	+\int_{e_4}\frac{\partial^2 w}{\partial x^2}
	\mathcal{R}^0_{e_4}\frac{\partial \xi_N}{\partial x}\;\mathrm{d}y  \notag\\
&=\int_{y_{c}-h_y}^{y_{c}+h_y}
	\left[\left(\frac{\partial^2 w}{\partial x^2}
	\mathcal{R}^0_{e_4}\frac{\partial \xi_N}{\partial x}\right)(x_c+h_x,y)
	- \left(\frac{\partial^2 w}{\partial x^2}
	\mathcal{R}^0_{e_2}\frac{\partial \xi_N}{\partial x}\right)(x_c-h_x,y) \right]\,\mathrm{d}y.
	\label{J2J4a}
\end{align}
As $\xi_N\in V_N = \text{span} \{1,x,y,x^2,xy,y^2,x^3,y^3\}$,
it is easy to see that
\begin{equation*}
\left(\mathcal{R}^0_{e_2}\frac{\partial \xi_N}{\partial x}\right)(x_c-h_x,y)
	= \left(\mathcal{R}^0_{e_4}\frac{\partial \xi_N}{\partial x}\right)(x_c+h_x,y)
		\ \text{ for }y_{c}-h_y \le y \le y_{c}+h_y ,
\end{equation*}
and the definition of $\mathcal{R}^0_{e_2}$ implies that
$\int_{y_c-h_y}^{y_c+h_y}\left(\mathcal{R}^0_{e_2}\frac{\partial \xi_N}{\partial x}\right)(x_c-h_x,y)\, \mathrm{d}y=0.$
Hence
\begin{align*}
J_2+J_4
&= \int_{y_{c}-h_y}^{y_{c}+h_y}
	\left[\frac{\partial^2 w}{\partial x^2}(x_c+h_x,y) - \frac{\partial^2 w}{\partial x^2}(x_c-h_x,y) \right]
		\left(\mathcal{R}^0_{e_2}\frac{\partial \xi_N}{\partial x}\right)(x_c-h_x,y) \,\mathrm{d}y \notag\\
=  \int_{y_{c}-h_y}^{y_{c}+h_y}&
	\left[\left( \mathcal{R}^0_{e_4}\frac{\partial^2 w}{\partial x^2}\right)(x_c+h_x,y)
		- \left( \mathcal{R}^0_{e_2}\frac{\partial^2 w}{\partial x^2}\right)(x_c-h_x,y) \right]
		\left(\mathcal{R}^0_{e_2}\frac{\partial \xi_N}{\partial x}\right)(x_c-h_x,y) \,\mathrm{d}y.
\end{align*}

A Cauchy-Schwarz inequality now yields
\begin{align}
(J_2+J_4)^2
&\le \left\{\int_{y_{c}-h_y}^{y_{c}+h_y}
	\left[\left( \mathcal{R}^0_{e_4}\frac{\partial^2 w}{\partial x^2}\right)(x_c+h_x,y)
		- \left( \mathcal{R}^0_{e_2}\frac{\partial^2 w}{\partial x^2}\right)(x_c-h_x,y) \right]^2 \,\mathrm{d}y\right\} \notag\\
&\qquad \times
	\int_{y_{c}-h_y}^{y_{c}+h_y} \left(\mathcal{R}^0_{e_2}\frac{\partial \xi_N}{\partial x}\right)^2(x_c-h_x,y) \,\mathrm{d}y.
	\label{J2J4}
\end{align}
In the second integral here, one has
\begin{align*}
\mathcal{R}^0_{e_2}\frac{\partial \xi_N}{\partial x}(x_c-h_x,y)
&=\frac{1}{2h_y}\int_{y_{c}-h_y}^{y_{c}+h_y}
	\left[\frac{\partial \xi_N}{\partial x}(x_c+h_x,y)-\frac{\partial \xi_N}{\partial x}(x_c+h_x,t)\right]\mathrm{d}t\\
&=\frac{1}{2h_y}\int_{t=y_{c}-h_y}^{y_{c}+h_y}\int_{s=t}^{y}\frac{\partial^2 \xi_N}{\partial x\partial s}(x_{c}+h_x,s)
	\,\mathrm{d}s\,\mathrm{d}t,
\end{align*}
so another Cauchy-Schwarz inequality gives
\begin{align*}
\left[ \mathcal{R}^0_{e_2}\frac{\partial \xi_N}{\partial x}(x_c-h_x,y) \right]^2
	&\le  \frac{1}{4h_y^2} \left\{ \int_{t=y_{c}-h_y}^{y_{c}+h_y}\int_{s=t}^{y}
		\left[\frac{\partial^2 \xi_N}{\partial x\partial s}(x_{c}+h_x,s)\right]^2
			\,\mathrm{d}s\,\mathrm{d}t \right\}
		\left\{ \int_{t=y_{c}-h_y}^{y_{c}+h_y}\int_{s=t}^{y}1\,\mathrm{d}s\,\mathrm{d}t \right\} \notag\\
&\le \frac{h_y}{h_x} \left\|\frac{\partial^2 \xi_N}{\partial x\partial y}\right\|_{L^2(K)}^2,
\end{align*}
where we used the property that $\frac{\partial^2 \xi_N}{\partial x\partial s}$ is independent of $x$,
which follows quickly from the definition of the Morley shape function space $P(K)$ in Section~\ref{sec:Morley}.
Hence
\begin{align}\label{part1}
\int_{y_{c}-h_y}^{y_{c}+h_y} \left(\mathcal{R}^0_{e_2}\frac{\partial \xi_N}{\partial x}\right)^2(x_c-h_x,y) \,\mathrm{d}y
\le  \frac{2h_y^2}{h_x} \left\|\frac{\partial^2 \xi_N}{\partial x\partial y}\right\|_{L^2(K)}^2.
\end{align}
For the first integrand in \eqref{J2J4}, one has
\begin{align*}
\left( \mathcal{R}^0_{e_4}\frac{\partial^2 w}{\partial x^2}\right)(x_c+h_x,y)
	&- \left( \mathcal{R}^0_{e_2}\frac{\partial^2 w}{\partial x^2}\right)(x_c-h_x,y) \\
	&=\frac{\partial^2 w}{\partial x^2}(x_c+h_x,y)-\frac{1}{2h_y}\int_{y_{c}-h_y}^{y_{c}+h_y}
		\frac{\partial^2 w}{\partial x^2}(x_c+h_x,t)\;\mathrm{d}t\\
&\qquad-\frac{\partial^2 w}{\partial x^2}(x_c-h_x,y)+\frac{1}{2h_y}\int_{y_{c}-h_y}^{y_{c}+h_y}\frac{\partial^2 w}{\partial x^2}(x_c-h_x,t)\;\mathrm{d}t\\
&=\frac{1}{2h_y}\int_{y_{c}-h_y}^{y_{c}+h_y}\int_{t}^{y}\frac{\partial^3 w}{\partial x^2\partial s}(x_c+h_x,s)\;\mathrm{d}s\,\mathrm{d}t\\
&\qquad-\frac{1}{2h_y}\int_{y_{c}-h_y}^{y_{c}+h_y}\int_{t}^{y}\frac{\partial^3 w}{\partial x^2\partial s}(x_c-h_x,s)
	\,\mathrm{d}s\,\mathrm{d}t\\
&=\frac{1}{2h_y}\int_{x_{c}-h_x}^{x_{c}+h_x}\int_{y_{c}-h_y}^{y_{c}+h_y}\int_{t}^{y}\frac{\partial^4 w}{\partial x^3\partial s}
	(x,s)\,\mathrm{d}s\,\mathrm{d}t\,\mathrm{d}x.
\end{align*}
Again appealing to Cauchy-Schwarz, we get
\begin{align*}
&\hspace{-7mm}\left[ \left( \mathcal{R}^0_{e_4}\frac{\partial^2 w}{\partial x^2}\right)(x_c+h_x,y)
	- \left( \mathcal{R}^0_{e_2}\frac{\partial^2 w}{\partial x^2}\right)(x_c-h_x,y) \right]^2 \\
&\le \frac{1}{4h_y^2} \left[\int_{x_{c}-h_x}^{x_{c}+h_x}\int_{y_{c}-h_y}^{y_{c}+h_y}\int_{t}^{y}
	\left(\frac{\partial^4 w}{\partial x^3\partial s}(x,s)\right)^2\,\mathrm{d}s\,\mathrm{d}t\,\mathrm{d}x\right]
	\left[\int_{x_{c}-h_x}^{x_{c}+h_x}\int_{y_{c}-h_y}^{y_{c}+h_y}\int_{t}^{y}1\,\mathrm{d}s\,\mathrm{d}t\,\mathrm{d}x\right] \\
&\le 4h_x h_y \left\|\frac{\partial^4 w}{\partial x^3\partial y}\right\|_{L^2(K)}^2;
\end{align*}
hence
\begin{align}
&\hspace{-2cm}\int_{y_{c}-h_y}^{y_{c}+h_y} \left[ \left( \mathcal{R}^0_{e_4}\frac{\partial^2 w}{\partial x^2}\right)(x_c+h_x,y)
	- \left( \mathcal{R}^0_{e_2}\frac{\partial^2 w}{\partial x^2}\right)(x_c-h_x,y) \right]^2 \,\mathrm{d}y \notag\\
&\le 8 h_x h_y^2 \left\|\frac{\partial^4 w}{\partial x^3\partial y}\right\|_{L^2(K)}^2.  \label{part2}
\end{align}
Substituting \eqref{part1} and \eqref{part2} into  \eqref{J2J4} then taking a square root yields
\begin{equation}
\left|J_2+J_4\right| \le 4 h_y^2 \left\|\frac{\partial^4 w}{\partial x^3\partial y}\right\|_{L^2(K)}
	\left\|\frac{\partial^2 \xi_N}{\partial x\partial y}\right\|_{L^2(K)}.  \label{step1}
\end{equation}

One can show similarly that
\begin{equation}\label{step2}
\left|J_1+J_3\right| \le 4 h_x^2 \left\|\frac{\partial^4 w}{\partial x\partial y^3}\right\|_{L^2(K)}
	\left\|\frac{\partial^2 \xi_N}{\partial x\partial y}\right\|_{L^2(K)}.
\end{equation}

Combining \eqref{Ji},  \eqref{step1} and \eqref{step2}, a Cauchy-Schwarz inequality gives
\begin{align*}
&\left|\sum_{K\in \mathcal{T}_N}
	\int_{\partial K}\frac{\partial^2 w}{\partial n^2}\frac{\partial \xi_N}{\partial n}\;\mathrm{d}s\right|  \\
&\le \sum_{K\in \mathcal{T}_N}|J_1+J_3|+\sum_{K\in \mathcal{T}_N}|J_2+J_4|\\
&\le 4
\left[\left(\sum_{K\in \mathcal{T}_N}h_x^4\left\|\frac{\partial^4 w}{\partial x\partial y^3}\right\|_{L^2(K)}^2\right)^{1/2}+\left(\sum_{K\in \mathcal{T}_N}h_y^4\left\|\frac{\partial^4 w}{\partial x^3\partial y}\right\|_{L^2(K)}^2\right)^{1/2}\right]
\left[\sum_{K\in \mathcal{T}_N}\|\nabla^2 \xi_N\|_{L^2(K)}^2\right]^{1/2}\\
&\le 4 \left[\left(\sum_{K\in \mathcal{T}_N}h_x^4\left\|\frac{\partial^4 w}{\partial x\partial y^3}\right\|_{L^2(K)}^2\right)^{1/2}+\left(\sum_{K\in \mathcal{T}_N}h_y^4\left\|\frac{\partial^4 w}{\partial x^3\partial y}\right\|_{L^2(K)}^2\right)^{1/2}\right] |\xi_N|_{2,N}.
\end{align*}
Thus, we have proved \eqref{consis1-1-eq}.

Applying the anisotropic inverse inequalities
\eqref{inverse1} to \eqref{step1} and \eqref{inverse2} to \eqref{step2} gives
\begin{align*}
\left|J_2+J_4\right|&\le C h_y \left\|\frac{\partial^4 w}{\partial x^3\partial y}\right\|_{L^2(K)} \left\|\frac{\partial \xi_N}{\partial x}\right\|_{L^2(K)},\\
\left|J_1+J_3\right|&\le C h_x \left\|\frac{\partial^4 w}{\partial x\partial y^3}\right\|_{L^2(K)} \left\|\frac{\partial \xi_N}{\partial y}\right\|_{L^2(K)}.
\end{align*}
Then, like the above derivation of~\eqref{consis1-1-eq}, one obtains  \eqref{consis1-2-eq}:
\begin{align*}
\left|\sum_{K\in \mathcal{T}_N}
	\int_{\partial K}\frac{\partial^2 w}{\partial n^2}\frac{\partial \xi_N}{\partial n}\;\mathrm{d}s\right|
&\le \sum_{K\in \mathcal{T}_N}|J_1+J_3|+\sum_{K\in \mathcal{T}_N}|J_2+J_4|\\
&\hspace{-2cm}\le C
\left[\sum_{K\in \mathcal{T}_N}\left(h_x^2\left\|\frac{\partial^4 w}{\partial x\partial y^3}\right\|_{L^2(K)}^2\right)^{1/2}
	+\sum_{K\in \mathcal{T}_N}\left(h_y^2\left\|\frac{\partial^4 w}{\partial x^3\partial y}\right\|_{L^2(K)}^2\right)^{1/2}\right]
		|\xi_N|_{1,N}.
\end{align*}
\end{proof}

\begin{remark}
At first sight \eqref{consis1-1-eq}, which bounds $\sum_{K\in \mathcal{T}_N}\int_{\partial K}
\frac{\partial^2 w}{\partial n^2}\frac{\partial \xi_N}{\partial n}\;\mathrm{d}s$ with factors  $h_x^2$ and $h_y^2$,
looks more attractive than \eqref{consis1-2-eq}, which yields  factors  $h_x$ and $h_y$.
But to continue the analysis with \eqref{consis1-1-eq} means working with $|\xi_N|_{2,N}$
and using the inequality $|\xi_N|_{2,N} \le \eps^{-1}\|\xi_N\|_{\eps,N}$, which introduces
an undesirable factor $\eps^{-1}$ into our error bounds.
Consequently we shall use~\eqref{consis1-2-eq} in our subsequent analysis, for then we have available
$|\xi_N|_{1,N} \le \|\xi_N\|_{\eps,N}$, which causes no difficulties.
\end{remark}

The next result was proved  in \cite[Lemma~3.3]{HuShiYang16} while assuming that
the mesh was quasi-uniform, but here we remove this restriction.

\begin{lemma}\label{consis2-1}
Assume that $w\in H_0^2(\Omega)\cap H^4(\Omega)$.
Then there exists a constant $C$ such that for all $\xi_N\in V_N$, one has
\begin{align}
&\left|\sum_{K\in \mathcal{T}_N}\int_{\partial K}
	\frac{\partial^2 w}{\partial s\partial n}\frac{\partial \xi_N}{\partial s}\,\mathrm{d}s\right| \notag\\
&\quad\le C \left[\left(\sum_{K\in \mathcal{T}_N}h_x^4\left\|\frac{\partial^4 w}{\partial x^2\partial y^2}\right\|_{L^2(K)}^2\right)^{1/2}
	+\left(\sum_{K\in \mathcal{T}_N}h_y^4\left\|\frac{\partial^4 w}{\partial x^2\partial y^2}\right\|_{L^2(K)}^2\right)^{1/2}\right]
		|\xi_N|_{2,N}  \label{consis2-1-eq}
\intertext{and}
&\left|\sum_{K\in \mathcal{T}_N}\int_{\partial K}
	\frac{\partial^2 w}{\partial s\partial n}\frac{\partial \xi_N}{\partial s}\,\mathrm{d}s\right| \notag\\
&\quad\le C \left[\sum_{K\in \mathcal{T}_N}\left(h_x^2\left\|\frac{\partial^4 w}{\partial x^2\partial y^2}\right\|_{L^2(K)}^2\right)^{1/2}+\sum_{K\in \mathcal{T}_N}\left(h_y^2\left\|\frac{\partial^4 w}{\partial x^2\partial y^2}\right\|_{L^2(K)}^2\right)^{1/2}\right] |\xi_N|_{1,N}. \label{consis2-2-eq}
\end{align}
\end{lemma}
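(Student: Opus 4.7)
The plan is to mirror the proof of Lemma~\ref{consis1-1}, with the normal derivative $\partial\xi_N/\partial n$ replaced by the tangential derivative $\partial\xi_N/\partial s$. The first step will be to argue that the edge-projection $\Pi^0_e(\partial\xi_N/\partial s)$ drops out of $\sum_K\int_{\partial K}(\partial^2 w/\partial s\,\partial n)(\partial\xi_N/\partial s)\,\mathrm{d}s$. On each internal edge $e$ with endpoints $A,B$ we have $\Pi^0_e(\partial\xi_N/\partial s)=|e|^{-1}(\xi_N(B)-\xi_N(A))$; the vertex values are single-valued in~$V_N$ while the outward tangent orientations from the two adjacent rectangles are opposite, so the two contributions cancel. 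Boundary edges contribute nothing because $\xi_N$ vanishes at vertices of $\partial\Omega$. Since $\partial^2 w/(\partial s\,\partial n)$ is intrinsically defined on each edge (invariant under $s\to-s,\,n\to-n$), this reduction gives
\[
\sum_{K\in\mathcal{T}_N}\int_{\partial K}\frac{\partial^2 w}{\partial s\,\partial n}\,\frac{\partial\xi_N}{\partial s}\,\mathrm{d}s
 = \sum_{K\in\mathcal{T}_N}\sum_{i=1}^4\int_{e_i}\frac{\partial^2 w}{\partial s\,\partial n}\,\mathcal{R}^0_{e_i}\frac{\partial\xi_N}{\partial s}\,\mathrm{d}s
 =: \sum_{K\in\mathcal{T}_N}\sum_{i=1}^4\widetilde J_i.
\]

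Next I would split the four edges into the vertical pair $\widetilde J_2+\widetilde J_4$ and the horizontal pair $\widetilde J_1+\widetilde J_3$. On~$e_2,e_4$ one has $\partial\xi_N/\partial s=\pm\partial\xi_N/\partial y$ and $\partial^2 w/(\partial s\,\partial n)=-\partial^2 w/(\partial x\,\partial y)$. The key polynomial facts, easily verified from the basis $\{1,x,y,x^2,xy,y^2,x^3,y^3\}$ of $P(K)$, are: (i) $\partial\xi_N/\partial y$ differs between $x=x_c-h_x$ and $x=x_c+h_x$ only by an additive $y$-constant, so that
\[
\mathcal{R}^0_{e_2}\tfrac{\partial\xi_N}{\partial y}(x_c-h_x,y)
 = \mathcal{R}^0_{e_4}\tfrac{\partial\xi_N}{\partial y}(x_c+h_x,y);
\]
and (ii) $\partial^2\xi_N/\partial y^2$ is independent of~$x$. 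Using~(i), $\widetilde J_2+\widetilde J_4$ collapses into a single integral in which the $w$-factor is the difference of $\mathcal{R}^0_{e_4}(\partial^2 w/\partial x\partial y)$ and $\mathcal{R}^0_{e_2}(\partial^2 w/\partial x\partial y)$ across the two edges, in exactly the way shown in \eqref{J2J4a}--\eqref{J2J4}. Applying Cauchy--Schwarz and the antidifferentiation argument of \eqref{part1}--\eqref{part2} -- one integration in~$y$ from each projection residual, a further integration in~$x$ to bridge the two vertical edges, together with~(ii) to pass from the restriction of $\partial^2\xi_N/\partial y^2$ to a vertical edge up to its $L^2(K)$ norm -- will yield
\[
|\widetilde J_2+\widetilde J_4| \;\le\; C\,h_y^2\left\|\tfrac{\partial^4 w}{\partial x^2\partial y^2}\right\|_{L^2(K)}\left\|\tfrac{\partial^2\xi_N}{\partial y^2}\right\|_{L^2(K)},
\]
and symmetrically $|\widetilde J_1+\widetilde J_3|\le C\,h_x^2\|\partial^4 w/\partial x^2\partial y^2\|_{L^2(K)}\|\partial^2\xi_N/\partial x^2\|_{L^2(K)}$.

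Summing over~$K$ and applying Cauchy--Schwarz then delivers~\eqref{consis2-1-eq}. To obtain~\eqref{consis2-2-eq} I would invoke the anisotropic inverse inequalities~\eqref{inverse1}--\eqref{inverse2} to trade $h_y\|\partial^2\xi_N/\partial y^2\|_{L^2(K)}$ for $\|\partial\xi_N/\partial y\|_{L^2(K)}$ and $h_x\|\partial^2\xi_N/\partial x^2\|_{L^2(K)}$ for $\|\partial\xi_N/\partial x\|_{L^2(K)}$, so that the $h_x^2,h_y^2$ factors become $h_x,h_y$ and the $\xi_N$-side aggregates into $|\xi_N|_{1,N}$. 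The only non-routine step is the verification of the polynomial identities (i) and (ii); everything else is a direct adaptation of Lemma~\ref{consis1-1}. I expect the main obstacle to be the sign bookkeeping in the edge split -- in particular confirming that $\mathcal{R}^0_e(\partial\xi_N/\partial s)$ really does enjoy the jump-cancellation property under the tangential (rather than normal) orientation, which is what makes the whole reduction go through.
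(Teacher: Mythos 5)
Your proposal is correct and follows essentially the same path as the paper's proof: reduce to the residuals $\mathcal{R}^0_{e_i}(\partial\xi_N/\partial s)$, pair opposite edges, exploit the two polynomial facts (that $\partial\xi_N/\partial y$ restricted to $x=x_c\pm h_x$ differ only by an additive constant, and that $\partial^2\xi_N/\partial y^2$ is $x$-independent), run the double antidifferentiation with Cauchy--Schwarz to get the local bound with $h_y^2\|\partial^4 w/\partial x^2\partial y^2\|_{L^2(K)}\|\partial^2\xi_N/\partial y^2\|_{L^2(K)}$, then sum and (for the second estimate) trade down with the anisotropic inverse inequalities. One remark on the reduction step that you flagged as the potential obstacle: your FTC-based justification --- that $\Pi^0_e(\partial\xi_N/\partial s)=|e|^{-1}(\xi_N(B)-\xi_N(A))$ is single-valued because $V_N$ is continuous at vertices, and the tangent orientations from the two adjacent rectangles are opposite while $\partial^2 w/(\partial s\,\partial n)$ is intrinsic --- is in fact the cleaner argument. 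The paper states more strongly that ``$\xi_N|_{e_i}$ is the same in both $K_1$ and $K_2$,'' which as literally written over-claims (the restriction of a rectangular Morley function to a vertical edge is a cubic in $y$ whose $y^2$ and $y^3$ coefficients are not pinned by the degrees of freedom on that edge, so the full trace need not be continuous); what is actually used, and what you supply, is only the well-definedness of the endpoint difference. So your version of the reduction is both sufficient and strictly correct. The minor sign bookkeeping you worried about ($\partial^2 w/(\partial s\,\partial n)$ versus $\pm\,\partial^2 w/(\partial x\,\partial y)$ on the vertical edges) is harmless since the final estimates are on absolute values.
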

\begin{proof}
The definition of $V_N$ implies that on each edge $e_i$ that is shared by two mesh rectangles
$K_1$ and $K_2$ (say), $\xi_N|_{e_i}$ is the same in both $K_1$ and $K_2$;
but on $e_i$ the derivative $\frac{\partial^2 w}{\partial s\partial n}$
will have opposite signs in these two rectangles, so
\begin{align}
\sum_{K\in \mathcal{T}_N}\int_{\partial K}\frac{\partial^2 w}{\partial s\partial n}\frac{\partial \xi_N}{\partial s}\;\mathrm{d}s
&=\sum_{K\in \mathcal{T}_N}\sum_{i=1}^4\int_{e_i}\frac{\partial^2 w}{\partial s\partial n}\frac{\partial \xi_N}{\partial s}\;\mathrm{d}s\notag\\
&=\sum_{K\in \mathcal{T}_N}\sum_{i=1}^4\int_{e_i}\frac{\partial^2 w}{\partial s\partial n}\mathcal{R}^0_{e_i}\frac{\partial \xi_N}{\partial s}\;\mathrm{d}s
=:\sum_{K\in \mathcal{T}_N}\sum_{i=1}^4J_i.
\label{Ji-2}
\end{align}

Similarly to the derivation of \eqref{J2J4a}, in the notation of Figure~\ref{fig:DOF} we have
\[  
J_2+J_4
=\int_{y_{c}-h_y}^{y_{c}+h_y}\left[\left(\frac{\partial^2 w}{\partial y\partial x}\mathcal{R}^0_{e_4}\frac{\partial \xi_N}{\partial y}\right)(x_c+h_x,y)-\left(\frac{\partial^2 w}{\partial y\partial x}\mathcal{R}^0_{e_2}\frac{\partial \xi_N}{\partial y}\right)(x_c-h_x,y)\right]\;\mathrm{d}y.
\]  
As $\xi_N\in V_N = \text{span} \{1,x,y,x^2,xy,y^2,x^3,y^3\}$,
it is easy to see that
\begin{equation*}
\left(\mathcal{R}^0_{e_2}\frac{\partial \xi_N}{\partial y}\right)(x_c-h_x,y)
	= \left(\mathcal{R}^0_{e_4}\frac{\partial \xi_N}{\partial y}\right)(x_c+h_x,y)
		\ \text{ for }y_{c}-h_y \le y \le y_{c}+h_y ,
\end{equation*}
and the definition of $\mathcal{R}^0_{e_2}$ implies that
$\int_{y_{c}-h_y}^{y_{c}+h_y}\left(\mathcal{R}^0_{e_2} \frac{\partial \xi_N}{\partial y}\right)(x_c-h_x,y)\;\mathrm{d}y=0$.
Hence
\begin{align*}
J_2+J_4
&= \int_{y_{c}-h_y}^{y_{c}+h_y}
	\left[\frac{\partial^2 w}{\partial y\partial x}(x_c+h_x,y) - \frac{\partial^2 w}{\partial y\partial x}(x_c-h_x,y) \right]
		\left(\mathcal{R}^0_{e_2}\frac{\partial \xi_N}{\partial y}\right)(x_c-h_x,y) \,\mathrm{d}y \notag\\
=  \int_{y_{c}-h_y}^{y_{c}+h_y}&
	\left[\left( \mathcal{R}^0_{e_4}\frac{\partial^2 w}{\partial y\partial x}\right)(x_c+h_x,y)
		- \left( \mathcal{R}^0_{e_2}\frac{\partial^2 w}{\partial y\partial x}\right)(x_c-h_x,y) \right]
		\left(\mathcal{R}^0_{e_2}\frac{\partial \xi_N}{\partial y}\right)(x_c-h_x,y) \,\mathrm{d}y.
\end{align*}

By a  Cauchy-Schwarz inequality we get
\begin{align}
(J_2+J_4)^2
&\le \left\{\int_{y_{c}-h_y}^{y_{c}+h_y}
	\left[\left( \mathcal{R}^0_{e_4}\frac{\partial^2 w}{\partial y\partial x}\right)(x_c+h_x,y)
		- \left( \mathcal{R}^0_{e_2}\frac{\partial^2 w}{\partial y\partial x}\right)(x_c-h_x,y) \right]^2 \,\mathrm{d}y\right\} \notag\\
&\qquad \times
	\int_{y_{c}-h_y}^{y_{c}+h_y} \left(\mathcal{R}^0_{e_2}\frac{\partial \xi_N}{\partial y}\right)^2(x_c-h_x,y) \,\mathrm{d}y.
	\label{J2J4-2}
\end{align}
Replacing $\partial^2 w/\partial x^2$ by $\partial^2 w/\partial y\partial x$
in the derivation of~\eqref{part2} yields
\begin{align*}
\int_{y_{c}-h_y}^{y_{c}+h_y}
	&\left[ \left( \mathcal{R}^0_{e_4}\frac{\partial^2 w}{\partial y\partial x}\right)(x_c+h_x,y)
		- \left( \mathcal{R}^0_{e_2}\frac{\partial^2 w}{\partial y\partial x}\right)(x_c-h_x,y) \right]^2
			\,\mathrm{d}y \notag\\
&\hspace{2cm}\le 8 h_x h_y^2 \left\|\frac{\partial^4 w}{\partial x^2\partial y^2}\right\|_{L^2(K)}^2,  
\end{align*}
while similarly to the derivation of \eqref{part1}, one has
\[ 
\int_{y_{c}-h_y}^{y_{c}+h_y} \left(\mathcal{R}^0_{e_2}\frac{\partial \xi_N}{\partial y}\right)^2(x_c-h_x,y) \,\mathrm{d}y
\le  \frac{2h_y^2}{h_x} \left\|\frac{\partial^2 \xi_N}{\partial y^2}\right\|_{L^2(K)}^2.
\] 
Hence \eqref{J2J4-2} implies that
\begin{equation}
\left|J_2+J_4\right| \le 4 h_y^2 \left\|\frac{\partial^4 w}{\partial x^2\partial y^2}\right\|_{L^2(K)}
	\left\|\frac{\partial^2 \xi_N}{\partial y^2}\right\|_{L^2(K)}.  \label{step1-2}
\end{equation}

One can show similarly that
\begin{equation}\label{step2-2}
\left|J_1+J_3\right| \le 4 h_x^2 \left\|\frac{\partial^4 w}{\partial x^2\partial y^2}\right\|_{L^2(K)}
	\left\|\frac{\partial^2 \xi_N}{\partial x^2}\right\|_{L^2(K)}.
\end{equation}

From \eqref{Ji-2}, \eqref{step1-2} and \eqref{step2-2} and  some Cauchy-Schwarz inequalities, we can derive
\begin{align*}
&\left|\sum_{K\in \mathcal{T}_N} \int_{\partial K}\frac{\partial^2 w}
	{\partial s\partial n}\frac{\partial \xi_N}{\partial s}\;\mathrm{d}s\right|
	\le \sum_{K\in \mathcal{T}_N}|J_1+J_3|+\sum_{K\in \mathcal{T}_N}|J_2+J_4|\\
&\le 4
\left[\left(\sum_{K\in \mathcal{T}_N}h_x^4\left\|\frac{\partial^4 w}{\partial x^2\partial y^2}\right\|_{L^2(K)}^2\right)^{1/2}+\left(\sum_{K\in \mathcal{T}_N}h_y^4\left\|\frac{\partial^4 w}{\partial x^2\partial y^2}\right\|_{L^2(K)}^2\right)^{1/2}\right]
\left[\sum_{K\in \mathcal{T}_N}\|\nabla^2 \xi_N\|_{L^2(K)}^2\right]^{1/2}\\
&\le 4 \left[\left(\sum_{K\in \mathcal{T}_N}h_x^4\left\|\frac{\partial^4 w}{\partial x^2\partial y^2}\right\|_{L^2(K)}^2\right)^{1/2}+\left(\sum_{K\in \mathcal{T}_N}h_y^4\left\|\frac{\partial^4 w}{\partial x^2\partial y^2}\right\|_{L^2(K)}^2\right)^{1/2}\right] |\xi_N|_{2,N},
\end{align*}
which proves \eqref{consis2-1-eq}.

Invoking the anisotropic inverse inequalities \eqref{inverse1}--\eqref{inverse2},
from \eqref{step1-2} and \eqref{step2-2} one has
\begin{align*}
\left|J_2+J_4\right|&\le C h_y \left\|\frac{\partial^4 w}{\partial x^2\partial y^2}\right\|_{L^2(K)} \left\|\frac{\partial \xi_N}{\partial y}\right\|_{L^2(K)},\\
\left|J_1+J_3\right|&\le C h_x \left\|\frac{\partial^4 w}{\partial x^2\partial y^2}\right\|_{L^2(K)} \left\|\frac{\partial \xi_N}{\partial x}\right\|_{L^2(K)}.
\end{align*}
Then one obtains  \eqref{consis2-2-eq} by imitating the above derivation of~\eqref{consis2-1-eq}.
\end{proof}

Lemmas~\ref{consis1-1} and~\ref{consis2-1} are anisotropic generalisations
on nonuniform meshes of the consistency error bounds
proved in~\cite{HuShiYang15,HuShiYang16} for discretisations of
a classical biharmonic problem
($\eps=1$ and $c\equiv 0$ in~\eqref{BVP}) on uniform meshes.

\section{Error analysis on a Shishkin mesh}\label{sec:Shishkin}

All our analysis so far is valid on the general tensor-product mesh defined at the start of Section~\ref{sec:Morley}, but to obtain good accuracy in our numerical solution of~\eqref{BVP}
one should use a mesh that is designed to handle boundary layers in singularly perturbed problems.
We now restrict our attention to a well-known tensor-product mesh of this type: the Shishkin mesh.

\subsection{The Shishkin mesh}\label{sec:ShishkinMesh}
Recall that $N$ is the number of mesh intervals in each coordinate direction.
Assume from now on that $N$~is divisible by~$4$. Define the transition parameter
\begin{equation*}
\lambda = \min\left\{\eps\ln N,  \, \frac{1}{4}\right\}.
\end{equation*}
We assume that $\lambda = \eps\ln N$ as otherwise $N$ would be exponentially large compared with $1/\eps$,
and then the problem can be solved accurately using a uniform mesh.
In practice $\eps$ is small, so it is extremely unlikely that $\eps \ln N > 1/4$.

Partition the interval $[0,1]$ by a piecewise uniform mesh that is obtained by
dividing $[0,\lambda]$ into $N/4$ equal subintervals,
$[\lambda, 1-\lambda]$ into $N/2$ equal subintervals and $[1-\lambda,1]$ into $N/4$ equal subintervals.
The tensor product of two such 1D meshes is our 2D Shishkin mesh for the problem~\eqref{BVP};
it is shown in Figure~\ref{fig:ShishRegions}.
(See \cite{RoosStynesTobiska08,StyStyBook} for further discussion of Shishkin meshes.)
We continue to write~$\mathcal{T}_N$ for the 2D mesh.
For any mesh sub-domain $\tilde{\Omega}$ of~$\Omega$,
denote the mesh on $\tilde{\Omega}$ by $\mathcal{T}_N(\tilde{\Omega})$.

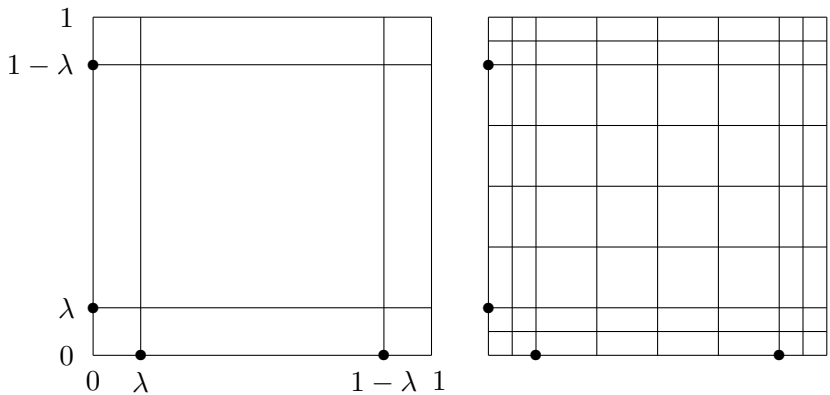
\begin{figure}[htbp]\unitlength0.5pt
\begin{center}
  \begin{picture}(256,256)
   \put(  0, 0){\line(0,1){256}}
   \put(256, 0){\line(0,1){256}}
   \put(220, 0){\line(0,1){256}}
   \put(36, 0){\line(0,1){256}}
   \put(  0, 0){\line(1,0){256}}
   \put(  0, 36){\line(1,0){256}}
   \put(  0,220){\line(1,0){256}}
   \put(  0,256){\line(1,0){256}}
   \put(0, -19){\makebox(0,0){$0$}}
   \put(36, -20){\makebox(0,0){$\lambda$}}
   \put(220, -20){\makebox(0,0){$1-\lambda$}}
   \put(262, -19){\makebox(0,0){$1$}}
   \put(-20,0){\makebox(0,0){$0$}}
   \put(-20,36){\makebox(0,0){$\lambda$}}
   \put(-40,220){\makebox(0,0){$1-\lambda$}}
   \put(-20,256){\makebox(0,0){$1$}}
   \put(36,0){\circle*{7}}
   \put(220,0){\circle*{7}}
   \put(0,220){\circle*{7}}
   \put(0,36){\circle*{7}}
  \end{picture}
  \hspace{5mm}
%
 \begin{picture}(256,256)
   \put(  0, 0){\line(0,1){256}}
   \put(18, 0){\line(0,1){256}}
   \put(36, 0){\line(0,1){256}}
   \put(82, 0){\line(0,1){256}}
   \put(128, 0){\line(0,1){256}}
   \put(174, 0){\line(0,1){256}}
   \put(220, 0){\line(0,1){256}}
   \put(238, 0){\line(0,1){256}}
   \put(256, 0){\line(0,1){256}}
   \put(  0, 0){\line(1,0){256}}
   \put(  0, 18){\line(1,0){256}}
   \put(  0, 36){\line(1,0){256}}
   \put(  0, 82){\line(1,0){256}}
   \put(  0, 128){\line(1,0){256}}
   \put(  0, 174){\line(1,0){256}}
   \put(  0,220){\line(1,0){256}}
   \put(  0,238){\line(1,0){256}}
   \put(  0,256){\line(1,0){256}}
   \put(36,0){\circle*{7}}
   \put(220,0){\circle*{7}}
   \put(0,220){\circle*{7}}
   \put(0,36){\circle*{7}}
   \end{picture}
\end{center}
\smallskip
\caption{A rectangular Shishkin mesh with $N=8$}
\label{fig:ShishRegions}
\end{figure}

In our error analysis, we use the Shishkin mesh subdomains $\Omega_1$,..., $\Omega_9$
that are displayed in Figure~\ref{fig:omega12}.
For convenience, define $\widehat{\Omega}_1:=\Omega_1\cup\Omega_4\cup\Omega_7$,
$\widehat{\Omega}_2:=\Omega_2\cup\Omega_5\cup\Omega_8$ and
$\widehat{\Omega}_3:=\Omega_3\cup\Omega_6\cup\Omega_9$.

\begin{figure}[htbp]\unitlength0.5pt
\unitlength1.0pt
\begin{center}
  \begin{picture}(128,128)
   \put(  0, 0){\line(0,1){128}}
   \put(128, 0){\line(0,1){128}}
   \put(  0, 0){\line(1,0){128}}
   \put(  0,128){\line(1,0){128}}
   \put(  18, 0){\line(0,1){128}}
   \put( 110, 0){\line(0,1){128}}
   \put(  0, 18){\line(1,0){128}}
   \put(  0,110){\line(1,0){128}}

   \put(-10,0){\makebox(0,0){$0$}}
   \put(-10,128){\makebox(0,0){$1$}}
   \put(0,-10){\makebox(0,0){$0$}}
   \put(128,-10){\makebox(0,0){$1$}}

   \put(18,-10){\makebox(0,0){$\lambda$}}
   \put(110,-10){\makebox(0,0){$1-\lambda$}}
   \put(-15,18){\makebox(0,0){$\lambda$}}
   \put(-15,110){\makebox(0,0){$1-\lambda$}}

   \put(18,0){\circle*{3}}
   \put(0,18){\circle*{3}}
   \put(110,0){\circle*{3}}
   \put(0,110){\circle*{3}}

   \put(9,9){\makebox(0,0){$\Omega_{1}$}}
   \put(64,9){\makebox(0,0){$\Omega_{2}$}}
   \put(119,9){\makebox(0,0){$\Omega_{3}$}}
   \put(9,64){\makebox(0,0){$\Omega_{4}$}}
   \put(64,64){\makebox(0,0){$\Omega_{5}$}}
   \put(119,64){\makebox(0,0){$\Omega_{6}$}}
   \put(9,119){\makebox(0,0){$\Omega_{7}$}}
   \put(64,119){\makebox(0,0){$\Omega_{8}$}}
   \put(119,119){\makebox(0,0){$\Omega_{9}$}}

  \end{picture}
  \end{center}
  \caption{Mesh subdomains in $\Omega$}
  \label{fig:omega12}
\end{figure}
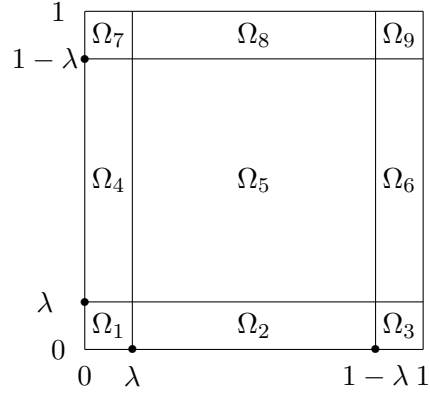

%
%
We shall now derive an error bound for the numerical solution $u_N$ of~\eqref{BVP_FEM}
by specialising our earlier results to the Shishkin mesh.
To do this we analyse the consistency error of the discretisation of each term in the PDE~\eqref{BVPeq} in several lemmas,
then combine these bounds in the proof of Theorem~\ref{thm:cgce}  to give the final convergence result.

\subsection{Consistency error of $a_N(\cdot,\cdot)$}\label{sec:erroraN}
\begin{lemma}\label{consis1-3}
Suppose that $u\in H_0^2(\Omega)\cap H^4(\Omega)$ satisfies Assumption \ref{ass:S}.
Then there exists a constant~$C$ such that for all $\xi_N\in V_N$, one has
\[
\eps^2\left|\sum_{K\in \mathcal{T}_N}\int_{\partial K}\frac{\partial^2 u}{\partial n^2}
	\frac{\partial \xi_N}{\partial n}\,\mathrm{d}s\right|
		\le C \left(\eps^{1/2}N^{-1}  + \eps N^{-1}\ln N  +N^{-2}\right) \|\xi_N\|_{\eps,N}.
\]
\end{lemma}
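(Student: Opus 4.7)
The approach is to invoke Lemma~\ref{consis1-1} on $w=u$ after inserting the decomposition $u = S + u^E$ from Assumption~\ref{ass:S}, and to estimate the smooth part and each layer component separately, selecting in each case whichever of \eqref{consis1-1-eq} or \eqref{consis1-2-eq} gives the sharper bound once the prefactor $\eps^2$ is absorbed into $\|\xi_N\|_{\eps,N}$.

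For the smooth part $S$, the fourth derivatives are $O(1)$ uniformly in $\eps$, so I would use \eqref{consis1-1-eq}: pulling out $\max_K h_x,\max_K h_y \le CN^{-1}$ gives $(\sum_K h_x^4\|\partial^4 S/\partial x\partial y^3\|^2_{L^2(K)})^{1/2} \le CN^{-2}\|\partial^4 S/\partial x\partial y^3\|_{L^2(\Omega)} \le CN^{-2}$, and the companion sum is handled identically. Multiplying by $\eps^2|\xi_N|_{2,N}$ and invoking $\eps|\xi_N|_{2,N}\le \|\xi_N\|_{\eps,N}$ yields an $O(\eps N^{-2})\|\xi_N\|_{\eps,N}\le O(N^{-2})\|\xi_N\|_{\eps,N}$ contribution, which accounts for the $N^{-2}$ term in the target.

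For a side layer such as $E_1$, the derivative $\partial^4 E_1/\partial x\partial y^3$ carries an $\eps^{-2}$ factor that cannot afford the extra $\eps^{-1}$ hidden in $|\xi_N|_{2,N}$; I therefore appeal to \eqref{consis1-2-eq}, using the sharper Cauchy-Schwarz form $C(\sum_K h_x^2\|\partial^4 E_1/\partial x\partial y^3\|^2_{L^2(K)})^{1/2}|\xi_N|_{1,N}$ that appears one step before the final line of its proof. Integrating $e^{-2y/\eps}$ across $\Omega$ gives $\|\partial^4 E_1/\partial x\partial y^3\|_{L^2(\Omega)} \le C\eps^{-3/2}$, so pulling out $\max_K h_x \le CN^{-1}$ delivers a factor $CN^{-1}\eps^{-3/2}$; multiplied by $\eps^2$ this is precisely the desired $O(\eps^{1/2}N^{-1})\|\xi_N\|_{\eps,N}$ piece. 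The companion sum with $\partial^4 E_1/\partial x^3\partial y$ has no $\eps^{-1}$ blow-up and is much smaller, and the remaining side layers $E_2,E_3,E_4$ follow by symmetry.

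The corner layers $E_{12},E_{23},E_{34},E_{14}$ are the most delicate because both mixed fourth derivatives inherit an $\eps^{-3}e^{-x/\eps}e^{-y/\eps}$ singularity. Taking $E_{12}$ as representative, I would split the sum from \eqref{consis1-2-eq} over the nine mesh subdomains of Figure~\ref{fig:omega12}. On the corner subdomain $\Omega_1$ both $h_x,h_y\le C\eps N^{-1}\ln N$ and the local estimate $\|\partial^4 E_{12}/\partial x\partial y^3\|_{L^2(\Omega_1)} \le C\eps^{-2}$ combines with this to give the $O(\eps N^{-1}\ln N)$ piece; on the strip $\Omega_2$ the factor $e^{-2x/\eps}$ integrated over $[\lambda,1-\lambda]$ contributes an extra $e^{-2\lambda/\eps}=N^{-2}$, producing an $O(N^{-2})$ contribution; the remaining subdomains are smaller still because of the additional exponential decay. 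The main obstacle is the careful region-by-region bookkeeping and the need to repeat it by symmetry for the other three corner layers. Summing the contributions from $S$, the four side layers and the four corner layers yields the claimed bound.
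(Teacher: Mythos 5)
Your proposal is correct and follows essentially the same route as the paper: decompose $u=S+u^E$ via Assumption~\ref{ass:S}, apply Lemma~\ref{consis1-1}, and estimate each component using the pointwise bounds \eqref{apriori} together with the known Shishkin mesh widths on each subdomain. The only cosmetic deviations are that you treat $S$ with \eqref{consis1-1-eq} and absorb the resulting $\eps^{-1}$ via $\eps|\xi_N|_{2,N}\le\|\xi_N\|_{\eps,N}$ (giving $\eps N^{-2}$), whereas the paper sticks with \eqref{consis1-2-eq} throughout and gets $\eps^2 N^{-1}$ for $S$; and that for the side layer $E_1$ you use the global bound $\max_K h_x\le CN^{-1}$ instead of the paper's split over $\widehat\Omega_1\cup\widehat\Omega_3$ versus $\widehat\Omega_2$. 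Both shortcuts are valid and produce bounds dominated by the claimed $\eps^{1/2}N^{-1}+\eps N^{-1}\ln N+N^{-2}$.
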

\begin{proof}
We apply the bound \eqref{consis1-2-eq} of Lemma~\ref{consis1-1} to each of the components of~$u$
that are provided by Assumption~\ref{ass:S}.

For the smooth component $S$, the bound \eqref{priS} and $h_x \le 2N^{-1}$ yield
\begin{equation}\label{consisS}
\eps^2\left[\sum_{K\in \mathcal{T}_N}h_x^2\left\|\frac{\partial^4 S}{\partial x\partial y^3}\right\|_{L^2(K)}^2\right]^{1/2}\le C \eps^2 N^{-1}.
\end{equation}

For the layer component $E_1$, using \eqref{priE1} on $\widehat{\Omega}_2$ we get
$$
\sum_{K\in \mathcal{T}_N(\widehat{\Omega}_2)}h_x^2\left\|\frac{\partial^4 E_1}{\partial x\partial y^3}\right\|_{L^2(K)}^2\le C N^{-2}\eps^{-4}\int_0^1\int_{\eps\ln N}^{1-\eps\ln N}e^{-2y/\eps}\,\mathrm{d}x\,\mathrm{d}y\le CN^{-2}\eps^{-4}\eps\le C\eps^{-3}N^{-2},
$$
and on $\widehat{\Omega}_1\cup\widehat{\Omega}_3$
\begin{align*}
\sum_{K\in \mathcal{T}_N(\widehat{\Omega}_1\cup\widehat{\Omega}_3)}
h_x^2\left\|\frac{\partial^4 E_1}{\partial x\partial y^3}\right\|_{L^2(K)}^2
&\le C (\eps N^{-1}\ln N)^2\eps^{-4}\int_0^1\left(\int_0^{\eps\ln N}+\int_{1-\eps\ln N}^1\right)e^{-2y/\eps}\,\mathrm{d}x\,\mathrm{d}y\\
&\le C(\eps N^{-1}\ln N)^2\eps^{-4}\eps(\eps\ln N)
	= CN^{-2}(\ln N)^3.
\end{align*}
Combining this pair of estimates and recalling that $\eps\ln N \le 1/4$, we obtain
\begin{equation}\label{consisE1}
\eps^2\left[\sum_{K\in \mathcal{T}_N}h_x^2\left\|\frac{\partial^4 E_1}{\partial x\partial y^3}\right\|_{L^2(K)}^2\right]^{1/2}\le C\eps^{1/2}N^{-1}.
\end{equation}
The estimates for $E_2$, $E_3$ and $E_4$ are similar.

For the layer component $E_{12}$, using \eqref{priE12} on $\widehat{\Omega}_2$ gives
\begin{align*}
\sum_{K\in \mathcal{T}_N(\widehat{\Omega}_2)}h_x^2\left\|\frac{\partial^4 E_{12}}{\partial x\partial y^3}\right\|_{L^2(K)}^2&\le C N^{-2}\eps^{-6}\int_0^1\int_{\eps\ln N}^{1-\eps\ln N}e^{-2x/\eps}e^{-2y/\eps}\,\mathrm{d}x\,\mathrm{d}y\\
&\le C\eps^{-4}N^{-4},
\end{align*}
and on $\widehat{\Omega}_1\cup\widehat{\Omega}_3$
\begin{align*}
\sum_{K\in \mathcal{T}_N(\widehat{\Omega}_1\cup\widehat{\Omega}_3)}
h_x^2\left\|\frac{\partial^4 E_{12}}{\partial x\partial y^3}\right\|_{L^2(K)}^2
&\le C (\eps N^{-1}\ln N)^2\eps^{-6}\int_0^1\left(\int_0^{\eps\ln N}+\int_{1-\eps\ln N}^1\right)e^{-2x/\eps}e^{-2y/\eps}\,\mathrm{d}x\,\mathrm{d}y\\
&\le C\eps^{-2}(N^{-1}\ln N)^2.
\end{align*}
Thus,
\begin{equation}\label{consisE12}
\eps^2\left[\sum_{K\in \mathcal{T}_N}h_x^2\left\|\frac{\partial^4 E_{12}}{\partial x\partial y^3}\right\|_{L^2(K)}^2\right]^{1/2}\le C(\eps N^{-1}\ln N + N^{-2}).
\end{equation}
The estimates for $E_{23}$, $E_{34}$ and $E_{41}$ are similar.

Adding \eqref{consisS}, \eqref{consisE1} and \eqref{consisE12}, we  get
\begin{equation}\label{consisu}
\eps^2\left[\sum_{K\in \mathcal{T}_N}h_x^2\left\|\frac{\partial^4 u}{\partial x\partial y^3}\right\|_{L^2(K)}^2\right]^{1/2}
	\le C(\eps^{1/2}N^{-1} + \eps N^{-1}\ln N +N^{-2}).
\end{equation}
Similarly,
\begin{equation*}
\eps^2\left[\sum_{K\in \mathcal{T}_N}h_y^2\left\|\frac{\partial^4 u}{\partial x^3\partial y}\right\|_{L^2(K)}^2\right]^{1/2}
	\le C\left(\eps^{1/2}N^{-1} + \eps N^{-1}\ln N +N^{-2}\right).
\end{equation*}
Now the bound \eqref{consis1-1-eq} of Lemma~\ref{consis1-1} and $|\xi_N|_{1,N} \le \|\xi_N\|_{\eps,N}$ yield
\begin{equation*}
\eps^2\left|\sum_{K\in \mathcal{T}_N}\int_{\partial K}\frac{\partial^2 u}{\partial n^2}\frac{\partial \xi_N}{\partial n}\,\mathrm{d}s\right|
	\le C \left(\eps^{1/2}N^{-1}  + \eps N^{-1}\ln N  +N^{-2}\right) \|\xi_N\|_{\eps,N}.
\end{equation*}
\end{proof}

\begin{lemma}\label{consis2-3}
Suppose that $u\in H_0^2(\Omega)\cap H^4(\Omega)$ satisfies Assumption \ref{ass:S}.
Then there exists a constant~$C$ such that for all $\xi_N\in V_N$, one has
\begin{align*}
\eps^2\left|\sum_{K\in \mathcal{T}_N}\int_{\partial K}\frac{\partial^2 u}{\partial s\partial n}\frac{\partial \xi_N}{\partial s}\;\mathrm{d}s\right|
\le C \left(\eps N^{-1}\ln N+N^{-2}\right)  \|\xi_N\|_{\eps,N}.
\end{align*}
\end{lemma}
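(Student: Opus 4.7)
The plan is to mimic the proof of Lemma~\ref{consis1-3} closely: apply the bound \eqref{consis2-2-eq} of Lemma~\ref{consis2-1} with $w=u$, decompose $u$ via Assumption~\ref{ass:S}, and on the Shishkin mesh estimate the quantity $\eps^2\bigl[\sum_K h_x^2 \|\partial^4 u/\partial x^2 \partial y^2\|_{L^2(K)}^2\bigr]^{1/2}$ (and the analogous quantity with $h_y$) component by component. The final step is to invoke $|\xi_N|_{1,N} \le \|\xi_N\|_{\eps,N}$.

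The component estimates proceed in the same order as in Lemma~\ref{consis1-3}. The smooth part gives $C\eps^2 N^{-1}$ using $|\partial^4 S/\partial x^2 \partial y^2| \le C$ and $h_x \le 2N^{-1}$. For the edge layer $E_1$, Assumption~\ref{ass:S} gives $|\partial^4 E_1/\partial x^2 \partial y^2| \le C\eps^{-1} e^{-y/\eps}$; splitting into the coarse-$y$ strip (where $\int_{\eps\ln N}^{1-\eps\ln N} e^{-2y/\eps}\,\mathrm{d}y \le \tfrac{\eps}{2} N^{-2}$ supplies an exponential gain) and the fine-$y$ strips (where the $y$-integral is merely $O(\eps)$) produces an $O(\eps^{3/2}N^{-1})$ contribution, which is absorbed into $\eps N^{-1}\ln N$. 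The edge layers $E_2,E_3,E_4$ follow by symmetry. For the corner layer $E_{12}$, Assumption~\ref{ass:S} yields $|\partial^4 E_{12}/\partial x^2\partial y^2| \le C\eps^{-3} e^{-x/\eps} e^{-y/\eps}$; in the four corner subrectangles, where both mesh widths are of size $C\eps N^{-1}\ln N$, one uses $h_x \le C\eps N^{-1}\ln N$, while in the edge subrectangles the coarse factor $h_x \le 2N^{-1}$ is compensated by $e^{-2\lambda/\eps}=N^{-2}$; together these yield a total contribution of $C(\eps N^{-1}\ln N + N^{-2})$. The corner layers $E_{23},E_{34},E_{14}$ are treated identically.

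The second sum in \eqref{consis2-2-eq}, with $h_y$ in place of $h_x$, is bounded by the same argument with the roles of $x$ and $y$ interchanged. Adding all contributions and inserting $|\xi_N|_{1,N} \le \|\xi_N\|_{\eps,N}$ then yields the claimed estimate. Note that, unlike Lemma~\ref{consis1-3}, no $\eps^{1/2}N^{-1}$ term appears: this is because the mixed derivative $\partial^4 E_i/\partial x^2\partial y^2$ of an edge layer carries only one negative power of~$\eps$, whereas the derivatives $\partial^4 E_i/\partial x\partial y^3$ and $\partial^4 E_i/\partial x^3\partial y$ that featured in Lemma~\ref{consis1-3} carry two.

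The main obstacle is the usual book-keeping on the anisotropic Shishkin mesh: because the factor $\eps^{-3}$ in the bound on $\partial^4 E_{12}/\partial x^2\partial y^2$ is severe, one must use the finest available $h_x$ (or $h_y$) in precisely those subrectangles where the corresponding one-dimensional exponential factor has not yet decayed, and exploit $e^{-2\lambda/\eps}=N^{-2}$ wherever the mesh is coarse. The crude uniform bound $h_x \le 2N^{-1}$ on all of $\Omega$ is \emph{not} enough to produce the $\eps N^{-1}\ln N$ rate for the corner layers; one really needs the case-by-case matching of mesh width to exponential decay. Apart from this matching, the argument is a direct analogue of the proof of Lemma~\ref{consis1-3}.
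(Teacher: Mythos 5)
Your proposal is correct and takes essentially the same approach as the paper, whose proof consists of the single instruction to imitate Lemma~\ref{consis1-3} while invoking Lemma~\ref{consis2-1} (i.e., \eqref{consis2-2-eq}) in place of Lemma~\ref{consis1-1}, together with the observation that the smooth and corner-layer estimates are unchanged while the edge-layer contribution improves from $C\eps^{1/2}N^{-1}$ to $C\eps^{3/2}N^{-1}$ (which is absorbed by $\eps N^{-1}\ln N$ since $\eps\ln N\le 1/4$). Your component-by-component accounting reproduces exactly this, and your closing remark correctly identifies that the corner layers are where the subregion-by-subregion matching of mesh width to exponential decay is genuinely needed; the only nit is the aside claiming that both $\partial^4 E_i/\partial x\partial y^3$ and $\partial^4 E_i/\partial x^3\partial y$ carry two negative powers of $\eps$, when in fact for a given edge layer $E_i$ only one of the two does (the other carries none), though this does not affect the argument.
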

\begin{proof}
Imitate the proof of Lemma~\ref{consis1-3} but invoke Lemma~\ref{consis2-1} instead of Lemma~\ref{consis1-1}.
To be specific, Lemma~\ref{consis2-1} gives bounds on the smooth and corner layer components
that are the same as \eqref{consisS} and \eqref{consisE12},
while for the boundary layer components, the bound $C \eps^{1/2}N^{-1}$ of~\eqref{consisE1}
is replaced by the smaller bound $C \eps^{3/2}N^{-1}$.
\end{proof}

\begin{remark}\label{compare}
[Comparison with Adini element]
For the highest-order term $\eps^2\Delta^2 u(x,y)$ of~\eqref{BVPeq},
the consistency error of the rectangular Morley element on the Shishkin mesh
is better than the corresponding bound for the rectangular Adini element, which was discussed in~\cite{MengStynes19}.
For in~\cite[eq.(4.27)]{MengStynes19},  the consistency error of the Adini element is bounded by
\begin{align*}
&\left[
\left(\sum_{K\in \mathcal{T}_N}h_x^4\left\|\frac{\partial^4 u}{\partial y^4}\right\|_{L^2(K)}^2\right)^{1/2}
+\left(\sum_{K\in \mathcal{T}_N}h_x^4\left\|\frac{\partial^4 u}{\partial x\partial y^3}\right\|_{L^2(K)}^2\right)^{1/2} \right.\\
&\left. +\left(\sum_{K\in \mathcal{T}_N}h_y^4\left\|\frac{\partial^4 u}{\partial x^4}\right\|_{L^2(K)}^2\right)^{1/2}
+\left(\sum_{K\in \mathcal{T}_N}h_y^4\left\|\frac{\partial^4 u}{\partial x^3\partial y}\right\|_{L^2(K)}^2\right)^{1/2}
\right] |\xi_N|_{2,N},
\end{align*}
or
\begin{align*}
&\left[
\left(\sum_{K\in \mathcal{T}_N}h_x^2\left\|\frac{\partial^4 u}{\partial y^4}\right\|_{L^2(K)}^2\right)^{1/2}
+\left(\sum_{K\in \mathcal{T}_N}h_x^2\left\|\frac{\partial^4 u}{\partial x\partial y^3}\right\|_{L^2(K)}^2\right)^{1/2} \right.\\
&\left. +\left(\sum_{K\in \mathcal{T}_N}h_y^2\left\|\frac{\partial^4 u}{\partial x^4}\right\|_{L^2(K)}^2\right)^{1/2}
+\left(\sum_{K\in \mathcal{T}_N}h_y^2\left\|\frac{\partial^4 u}{\partial x^3\partial y}\right\|_{L^2(K)}^2\right)^{1/2}
\right] |\xi_N|_{1,N}
\end{align*}
after using an inverse inequality.
In the above bounds, terms such as $h_x^2\left\|\frac{\partial^4 u}{\partial y^4}\right\|_{L^2(K)}$
or $h_x\left\|\frac{\partial^4 u}{\partial y^4}\right\|_{L^2(K)}$ are troublesome because of the mismatch
where derivatives in one variable are multiplied by mesh widths in the other variable---this is problematic for example
on mesh rectangles~$K\subset \Omega_2$ where $|\partial^4 u/\partial y^4| = O(\eps^{-3})$ by~\eqref{priE1}
but $h_x = O(N^{-1})$, leading to bounds involving negative powers of~$\eps$.
Remarkably, the rectangular Morley element does not have this drawback;
the estimates of Lemmas~\ref{consis1-1} and~\ref{consis2-1}  do not have similar mismatches
and consequently we are able to prove the error bounds of Lemmas~\ref{consis1-3} and~\ref{consis2-3}
in which no negative powers of~$\eps$ appear.
\end{remark}

Recall the piecewise bilinear interpolation operator $I_N$ of Section~\ref{sec:discretisation}
and the piecewise constant projector $P^0_K$ of Section~\ref{sec:prelim}.

\begin{lemma}\label{trun-1}
Suppose that $u\in H_0^2(\Omega)\cap H^4(\Omega)$ satisfies Assumption \ref{ass:S}.
Then there exists a constant~$C$ such that for all $\xi_N\in V_N$, one has
\begin{align*}
\eps^2\left|\sum_{K\in \mathcal{T}_N}\int_{K} \nabla\Delta u \nabla (\xi_N-I_N \xi_N)\,\mathrm{d}x\,\mathrm{d}y\right|
	\le C \left(\eps^{1/2}N^{-1}+\eps N^{-1}\ln N + N^{-2}\right) \|\xi_N\|_{\eps,N}.
\end{align*}
\end{lemma}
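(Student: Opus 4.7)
The plan is to exploit the explicit form of $w:=\xi_N-I_N\xi_N$ on each mesh rectangle, combined with an orthogonality trick and anisotropic Poincar\'e inequalities, and then to sum the resulting local bounds over the Shishkin mesh with a subdomain split for the most singular components of the decomposition in Assumption~\ref{ass:S}.

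First I would record the precise structure of $w$. Writing $\xi_N$ in the monomial basis of $P(K)$ in shifted coordinates $\tilde x=x-x_c$, $\tilde y=y-y_c$ and noting that the bilinear interpolant $I_N$ is the identity on $\{1,\tilde x,\tilde y,\tilde x\tilde y\}$, $I_N\tilde x^2=h_x^2$, $I_N\tilde x^3=h_x^2\tilde x$ (and symmetrically in $y$), one obtains the closed-form expression
\[
w=c_4(\tilde x^2-h_x^2)+c_7\tilde x(\tilde x^2-h_x^2)+c_6(\tilde y^2-h_y^2)+c_8\tilde y(\tilde y^2-h_y^2),
\]
where $c_4,c_6,c_7,c_8$ are the relevant coefficients of $\xi_N$ on~$K$. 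Consequently $\partial_x w$ depends only on $\tilde x$ and $\partial_y w$ only on $\tilde y$, with $\int_{-h_x}^{h_x}\partial_x w\,d\tilde x=0=\int_{-h_y}^{h_y}\partial_y w\,d\tilde y$. A direct calculation (completing squares against the analogous expansion of $\partial_x\xi_N$) yields two complementary estimates,
\[
\|\partial_x w\|_{L^2(K)}\le\|\partial_x\xi_N\|_{L^2(K)}\ \text{ and }\ \|\partial_x w\|_{L^2(K)}\le Ch_x\|\partial_x^2\xi_N\|_{L^2(K)},
\]
and analogous pairs of bounds for $\partial_y w$.

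Next I would split $\int_K\nabla\Delta u\cdot\nabla w=\int_K\partial_x\Delta u\,\partial_x w+\int_K\partial_y\Delta u\,\partial_y w$. The vanishing $x$-mean of $\partial_x w$ combined with its $y$-independence implies that $\int_K\phi(y)\,\partial_x w\,dx\,dy=0$ for every $\phi$ depending only on $y$; subtracting the $x$-average $\overline{\partial_x\Delta u}(y):=(2h_x)^{-1}\int_{-h_x}^{h_x}\partial_x\Delta u(x,y)\,dx$ and applying the 1D Poincar\'e inequality in the $x$-direction gives
\[
\Bigl|\int_K\partial_x\Delta u\,\partial_x w\Bigr|\le Ch_x\|u_{xxxx}+u_{xxyy}\|_{L^2(K)}\,\|\partial_x w\|_{L^2(K)},
\]
and the symmetric identity with $h_y\|u_{xxyy}+u_{yyyy}\|\,\|\partial_y w\|$ holds for the $y$-integral. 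Summing, applying Cauchy-Schwarz and using $\|\partial_x w\|_{L^2(K)}\le\|\partial_x\xi_N\|_{L^2(K)}$ (and the analogue in $y$) reduces the whole task to bounding terms of the form $\eps^2\bigl(\sum_K h_x^2\|u_{xxxx}+u_{xxyy}\|_{L^2(K)}^2\bigr)^{1/2}$ and the symmetric sum in $y$, which I would estimate component by component and subdomain by subdomain in the style of the proof of Lemma~\ref{consis1-3}.

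The only terms that do not immediately fit the target from this single Poincar\'e argument are the contributions of $\|u_{xxxx}(E_2)\|$, $\|u_{xxxx}(E_4)\|$, $\|u_{yyyy}(E_1)\|$, $\|u_{yyyy}(E_3)\|$, each of which scales as $\eps^{-3}$; this is the main obstacle. To handle them I would split the Shishkin mesh into the layer/middle subdomains of Section~\ref{sec:ShishkinMesh}. For $E_1$, say, on $y$-middle elements the smallness $e^{-y/\eps}\le N^{-1}$ gives $\|\partial_y\Delta u(E_1)\|_{L^2(\text{y-middle})}\le C\eps^{-3/2}N^{-1}$, and I would abandon the Poincar\'e step in favour of direct Cauchy-Schwarz with $\|\partial_y w\|_{L^2(K)}\le\|\partial_y\xi_N\|_{L^2(K)}$ and $|\xi_N|_{1,N}\le\|\xi_N\|_{\eps,N}$; multiplying by $\eps^2$ yields exactly the $\eps^{1/2}N^{-1}$ part of the target. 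On $y$-layer elements, where $h_y=O(\eps N^{-1}\ln N)$, I would instead use the single Poincar\'e bound on $\partial_y\Delta u$ together with the second inequality $\|\partial_y w\|_{L^2(K)}\le Ch_y\|\partial_y^2\xi_N\|_{L^2(K)}$ and the inequality $\eps|\xi_N|_{2,N}\le\|\xi_N\|_{\eps,N}$; the two factors of $h_y$ paired with $\|\partial_y^2\Delta u(E_1)\|_{L^2(\text{y-layer})}\le C\eps^{-5/2}$ produce a term of size $\eps^{1/2}(\ln N)^2/N^2$, which is absorbed into $C\eps^{1/2}N^{-1}$ because $(\ln N)^2/N$ is uniformly bounded. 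The smooth component, the corner layers $E_{ij}$ (whose $\partial_y^2\Delta u$ carries the extra factor $e^{-x/\eps}$) and the components $E_1,E_3$ in the $x$-part are controlled by the single Poincar\'e argument with no subdomain split; the $x$-part for $E_2,E_4$ is handled by the mirror of the above dichotomy.
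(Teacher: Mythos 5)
Your proposal is correct, but it takes a genuinely different route from the paper's. The paper's argument establishes $\int_K\nabla(\xi_N-I_N\xi_N)\,\mathrm{d}x\,\mathrm{d}y=0$ by checking the eight basis monomials of $P(K)$, then subtracts the two-dimensional constant projection $P_K^0\nabla\Delta u$ and appeals to Cauchy–Schwarz, the $H^1$-stability estimate \eqref{H1stab}, and the anisotropic projection bound \eqref{L2pro} applied componentwise to $\nabla\Delta u$; this yields the full eight-term expansion containing cross terms such as $h_x^2\|u_{xyyy}\|^2$ and $h_y^2\|u_{xxxy}\|^2$, which the paper then estimates layer-by-layer as in Lemma~\ref{consis1-3}. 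You instead exploit the finer structural fact that $\partial_x(\xi_N-I_N\xi_N)$ depends on $\tilde{x}$ alone with zero $x$-mean (and symmetrically in $y$), which lets you subtract the \emph{one-dimensional} $x$-average $\overline{\partial_x\Delta u}(y)$ and apply a 1D Poincar\'e inequality; this produces only the matched pair $h_x\|\partial_x^2\Delta u\|$, $h_y\|\partial_y^2\Delta u\|$ with no cross terms, and requires neither \eqref{L2pro} nor \eqref{H1stab}. Both routes meet, in the diagonal terms, the $\eps^{-3}$-scaled contributions $h_y\|u_{yyyy}(E_1)\|$ and its symmetric siblings; the paper disposes of these tersely by declaring the cross terms the "worst-behaved", whereas you attack them head-on with a subdomain dichotomy driven by the two complementary bounds $\|\partial_y w\|_{L^2(K)}\le\|\partial_y\xi_N\|_{L^2(K)}$ and $\|\partial_y w\|_{L^2(K)}\le Ch_y\|\partial_y^2\xi_N\|_{L^2(K)}$: on layer strips you pay the extra $h_y$ against $\eps|\xi_N|_{2,N}\le\|\xi_N\|_{\eps,N}$, and on middle strips you drop the Poincar\'e step and use the exponential decay. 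This explicit dichotomy makes your argument more detailed and self-contained (the paper leaves more for the reader to fill in), at the price of being more computational; and because your localisation has no cross terms at all, it is arguably the cleaner of the two derivations.
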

\begin{proof}
Let  $\xi_N\in V_N$ and $K\in \mathcal{T}_N$.
Similarly to the proof of \cite[Lemma 3.1]{HuShiYang15}, one sees that
\begin{equation*}
\int_{K} \frac{\partial(\xi_N-I_N \xi_N)}{\partial x}\;\,\mathrm{d}x\,\mathrm{d}y=\int_{K} \frac{\partial(\xi_N-I_N \xi_N)}{\partial y}\;\,\mathrm{d}x\,\mathrm{d}y=0
\end{equation*}
by verifying this property for each of the functions $1,x,y,x^2,xy,y^2,x^3,y^3$.
Hence
\begin{align*}
\left|\sum_{K\in \mathcal{T}_N}\int_{K} \nabla\Delta u \nabla (\xi_N-I_N \xi_N)\,\mathrm{d}x\,\mathrm{d}y\right|
&= \left|\sum_{K\in \mathcal{T}_N}\int_{K} \left(\nabla\Delta u-P_K^0\nabla\Delta u\right) \nabla(\xi_N-I_N \xi_N)
 	\,\mathrm{d}x\,\mathrm{d}y\right| \\
&\hspace{-25mm}\le C\left(\sum_{K\in\mathcal{T}_N}\left\|(\nabla\Delta u-P_K^0\nabla\Delta u)\right\|_{L^2(K)}^2\right)^{1/2}
\left(\sum_{K\in\mathcal{T}_N}\|\nabla \xi_N\|_{L^2(K)}^2\right)^{1/2},
\end{align*}
where we used  \eqref{H1stab} and Cauchy-Schwarz inequalities.
But the anisotropic  projection error bound \eqref{L2pro} yields
\begin{align*}
&\hspace{-3mm}\sum_{K\in\mathcal{T}_N}\|(\nabla\Delta u-P_K^0\nabla\Delta u)\|_{L^2(K)}^2 \\
&\le C\sum_{K\in \mathcal{T}_N}
\bigg(
h_x^2\left\|\frac{\partial^4 u}{\partial x^4}\right\|_{L^2(K)}^2+
h_y^2\left\|\frac{\partial^4 u}{\partial x^3\partial y}\right\|_{L^2(K)}^2+
h_x^2\left\|\frac{\partial^4 u}{\partial x^3\partial y}\right\|_{L^2(K)}^2+
h_y^2\left\|\frac{\partial^4 u}{\partial x^2\partial y^2}\right\|_{L^2(K)}^2\\
&\quad+h_x^2\left\|\frac{\partial^4 u}{\partial x^2\partial y^2}\right\|_{L^2(K)}^2+
h_y^2\left\|\frac{\partial^4 u}{\partial x\partial y^3}\right\|_{L^2(K)}^2+
h_x^2\left\|\frac{\partial^4 u}{\partial x\partial y^3}\right\|_{L^2(K)}^2+
h_y^2\left\|\frac{\partial^4 u}{\partial y^4}\right\|_{L^2(K)}^2
\bigg).
\end{align*}
Here the worst-behaved terms are $h_x^2\left\|\frac{\partial^4 u}{\partial x\partial y^3}\right\|_{L^2(K)}^2$ and $h_y^2\left\|\frac{\partial^4 u}{\partial x^3\partial y}\right\|_{L^2(K)}^2$.
These terms can be bounded like \eqref{consisu}, obtaining
\begin{equation*}
\eps^2\left(\sum_{K\in\mathcal{T}_N}\|(\nabla\Delta u-P_K^0\nabla\Delta u)\|_{L^2(K)}^2\right)^{1/2}
	\le C \left(\eps^{1/2}N^{-1}+\eps N^{-1}\ln N + N^{-2}\right).
\end{equation*}
The lemma now follows since $\left(\sum_{K\in\mathcal{T}_N}\|\nabla \xi_N\|_{L^2(K)}^2\right)^{1/2} \le  \|\xi_N\|_{\eps,N}$.
\end{proof}

\subsection{Consistency error of $b_N(\cdot,\cdot)$}\label{sec:errorbN}
In this subsection we consider the consistency error of the lower-order term in the PDE~\eqref{BVPeq}.
The components $u^E$ and $S$ of $u$ that were defined in Assumption~\ref{ass:S}
are handled separately.

\begin{lemma}\label{trun-4E}
There exists a constant~$C$ such that for all $\xi_N\in V_N$, one has
\[
\left|b_N\left(u^E,\xi_N - I_N \xi_N\right)\right|
\le C \left(\eps^{1/2}N^{-1}+\eps N^{-1}\ln N + N^{-2}\right) \|\xi_N\|_{\eps,N}.
\]
\end{lemma}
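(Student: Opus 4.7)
The plan is to adapt the proof of Lemma~\ref{trun-1} to this lower-order term. The identity $\int_K\nabla(\xi_N-I_N\xi_N)\,\mathrm{d}x\,\mathrm{d}y=0$ for each $K\in\mathcal T_N$, established in the proof of Lemma~\ref{trun-1} by verifying it on each basis function of $P(K)$, lets me subtract the piecewise-constant vector $P_K^0(c\nabla u^E)$ without changing the value of the sum. An elementwise Cauchy--Schwarz inequality together with the $H^1$-stability~\eqref{H1stab} of $I_N$ and the discrete Poincar\'e inequality (available because $\xi_N$ vanishes at every boundary vertex) then reduce the problem to bounding $\bigl(\sum_K\|c\nabla u^E-P_K^0(c\nabla u^E)\|_{L^2(K)}^2\bigr)^{1/2}$ and multiplying by $\|\xi_N\|_{\eps,N}$.

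Next, the anisotropic projection bound~\eqref{L2pro}, applied componentwise to $c\nabla u^E$, further reduces the task to estimating $\sum_K h_x^2\|\partial_x(c\nabla u^E)\|_{L^2(K)}^2+\sum_K h_y^2\|\partial_y(c\nabla u^E)\|_{L^2(K)}^2$. I would split $u^E$ into its four edge layers $E_1,\dots,E_4$ and four corner layers $E_{12},E_{23},E_{34},E_{14}$ and estimate each separately using Assumption~\ref{ass:S}. For $E_1$, the dominant first derivatives of $c\nabla E_1$ are $|\partial_x(c\nabla E_1)|\le Ce^{-y/\eps}$ (from $c\partial_x\partial_y E_1$) and $|\partial_y(c\nabla E_1)|\le C\eps^{-1}e^{-y/\eps}$ (from $c\partial_y^2 E_1$); the other edge layers are symmetric, and the corner layers admit analogous bounds with exponential decay in both variables. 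The subdomain bookkeeping then follows the proof of Lemma~\ref{consis1-3}: split over $\widehat\Omega_1,\widehat\Omega_2,\widehat\Omega_3$ and further by $y$-fineness, and estimate each piece in the style of~\eqref{consisE1} and~\eqref{consisE12}. Throughout, the inequality $\eps\ln N\le 1/4$ is used to absorb stray powers of $\ln N$: on $\widehat\Omega_1\cup\widehat\Omega_3$, a factor $(\eps\ln N)^{3/2}$ that would otherwise be unbounded is collapsed into a constant, giving an $\le C\eps^{1/2}N^{-1}$ contribution exactly as in~\eqref{consisE1}, while the corner pieces reproduce~\eqref{consisE12} and supply the $\eps N^{-1}\ln N$ and $N^{-2}$ terms.

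The hardest step will be the contribution of $h_y^2\|\partial_y(c\nabla E_1)\|_{L^2(K)}^2$ from mesh elements $K\subset\Omega_2$, where the $y$-mesh is refined with $h_y\approx 4\eps N^{-1}\ln N$ while $|\partial_y^2 E_1|$ attains its peak of $\eps^{-1}$. A crude application of~\eqref{L2pro} yields a log-contaminated factor $\eps^{1/2}N^{-1}\ln N$; compressing this into the claimed $\eps^{1/2}N^{-1}$ will require a careful balancing of the sharp exponential integral $\int_0^\lambda e^{-2y/\eps}\,\mathrm{d}y\le\eps/2$ against $\eps\ln N\le 1/4$, paralleling the absorption of $(\eps\ln N)^{3/2}$ in the proof of~\eqref{consisE1}. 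The three symmetric edge layers $E_2,E_3,E_4$ require the same bookkeeping but introduce no new ideas.
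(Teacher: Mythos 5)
Your approach matches the paper's in all its essential steps: exploit $\int_K\nabla(\xi_N-I_N\xi_N)\,\mathrm{d}x\,\mathrm{d}y=0$ to subtract a piecewise constant, apply Cauchy--Schwarz together with the $H^1$-stability~\eqref{H1stab}, then reduce to the anisotropic projection bound~\eqref{L2pro} and estimate the resulting derivative sums layer by layer over the Shishkin subdomains. The one structural difference is the treatment of the variable coefficient: you apply~\eqref{L2pro} directly to the product $c\nabla u^E$ (so your derivative estimates pick up product-rule terms like $(\partial_x c)\nabla u^E$, which behave harmlessly), while the paper first splits $c=P_K^0 c+(c-P_K^0 c)$ and handles $P_K^0 c(\nabla u^E - P_K^0\nabla u^E)$ and $(c-P_K^0 c)\nabla u^E$ separately. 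The two are essentially equivalent; the paper's split has the minor advantage of isolating the regularity demands on $c$, whereas yours needs $c\nabla u^E$ elementwise in $H^1$ (i.e., $c$ differentiable), which is in any case implicitly assumed throughout.

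The one place where you should be more careful is the passage that you yourself flag as ``the hardest step'': the contribution of $h_y^2\bigl\|\partial_y^2 E_1\bigr\|_{L^2(K)}^2$ on elements in the fine $y$-band, where $h_y\approx\eps N^{-1}\ln N$ and $\|\partial_y^2 E_1\|_{L^2}^2\approx\eps^{-1}$, yields $\eps\,\ln^2 N\,N^{-2}$ and hence $\eps^{1/2}N^{-1}\ln N$ after the square root. You assert that this ``will be compressed'' into $\eps^{1/2}N^{-1}$ by balancing against $\eps\ln N\le\tfrac14$, paralleling the absorption of $(\eps\ln N)^{3/2}$ in~\eqref{consisE1}. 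But that absorption relied on the extra external factor $\eps^2$ present in Lemma~\ref{consis1-3}; here there is no such factor, and $\eps^{1/2}\ln N=(\eps\ln^2 N)^{1/2}$ is \emph{not} bounded under $\eps\ln N\le\tfrac14$. So the claimed compression does not go through by that mechanism, and the $\ln N$ needs a different argument (or must appear in the lemma's bound). It is worth noting that the paper's own proof also glosses over this particular mismatched term, asserting that the worst-behaved contributions are $h_x^2\|\partial_x\partial_y u^E\|^2$ and $h_y^2\|\partial_x\partial_y u^E\|^2$ rather than $h_y^2\|\partial_y^2 u^E\|^2$. Your write-up actually identifies the delicate term more accurately than the paper does, but the resolution you sketch is not substantiated; if you want a complete argument you should either exhibit the cancellation explicitly or accept a $\ln N$ factor in the conclusion.
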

\begin{proof}
Recall that $u^E(x,y) = \sum_{i = 1}^4 E_i(x,y) + E_{12}(x,y) + E_{23}(x,y) + E_{34}(x,y) + E_{14}(x,y)$. Similarly to the proof of Lemma \ref{trun-1}, one has
\begin{align*}
\left|b_N(u^E,\xi_N - I_N\xi_N) \right| &=
\left|\sum_{K\in \mathcal{T}_N}\int_{K} c \nabla u^E\nabla (\xi_N-I_N \xi_N)\,\mathrm{d}x\,\mathrm{d}y\right| \\
&\le\left|\sum_{K\in \mathcal{T}_N}\int_{K} P_K^0 c\left(\nabla u^E-P_K^0\nabla u^E\right)\nabla (\xi_N-I_N \xi_N)
	\,\mathrm{d}x\,\mathrm{d}y \right| \\
&\qquad+\left|\sum_{K\in \mathcal{T}_N}\int_{K} \left(c-P_K^0 c\right)\nabla u^E\nabla (\xi_N-I_N \xi_N)\,\mathrm{d}x\,\mathrm{d}y \right| \\
&\hspace{-28mm}\le C\left(\sum_{K\in\mathcal{T}_N} \left\|\left(\nabla u^E-P_K^0\nabla u^E\right)\right\|_{L^2(K)}^2\right)^{1/2}
\left(\sum_{K\in\mathcal{T}_N}\|\nabla \xi_N\|_{L^2(K)}^2\right)^{1/2}\\
&\hspace{-20mm} + C\left(\sum_{K\in\mathcal{T}_N} \left\|\left(\nabla c-P_K^0\nabla c\right)\right\|_{L^2(K)}^2\right)^{1/2}
\left(\sum_{K\in\mathcal{T}_N}\|\nabla \xi_N\|_{L^2(K)}^2\right)^{1/2} \\
&\hspace{-28mm}\le C\left[\left(\sum_{K\in\mathcal{T}_N} \left\|\left(\nabla u^E-P_K^0\nabla u^E\right)\right\|_{L^2(K)}^2\right)^{1/2}
	+ \left(\sum_{K\in\mathcal{T}_N} \left\|\left(\nabla c-P_K^0\nabla c\right)\right\|_{L^2(K)}^2\right)^{1/2}\right]
	 \|\xi_N\|_{\eps,N},
\end{align*}
as $\left|\nabla u^E\right|\le C$ on $\Omega$ by Assumption \ref{ass:S}
and $\left(\sum_{K\in\mathcal{T}_N}\|\nabla \xi_N\|_{L^2(K)}^2\right)^{1/2} \le  \|\xi_N\|_{\eps,N}$.
These two terms have a similar structure, but
$c$ is well behaved so we give a detailed analysis only for the term containing~$u^E$.

The anisotropic projection error estimate \eqref{L2pro} yields
\begin{align*}
&\sum_{K\in\mathcal{T}_N} \left\|\left(\nabla u^E-P_K^0\nabla u^E\right)\right\|_{L^2(K)}^2 \\
&\le C\sum_{K\in \mathcal{T}_N}
\left(
h_x^2\left\|\frac{\partial^2 u^E}{\partial x^2}\right\|_{L^2(K)}^2+
h_y^2\left\|\frac{\partial^2 u^E}{\partial x\partial y}\right\|_{L^2(K)}^2+
h_x^2\left\|\frac{\partial^2 u^E}{\partial x\partial y}\right\|_{L^2(K)}^2+
h_y^2\left\|\frac{\partial^2 u^E}{\partial y^2}\right\|_{L^2(K)}^2\right).
\end{align*}
The boundary layer and corner layer components of $u^E$
are handled as in the proof of Lemma~\ref{consis1-3}.
The worst-behaved terms are
$h_x^2\left\|\frac{\partial^2 u^E}{\partial x\partial y}\right\|_{L^2(K)}^2$
and $h_y^2\left\|\frac{\partial^2 u^E}{\partial x\partial y}\right\|_{L^2(K)}^2$,
whereas in  Lemma~\ref{consis1-3} one has
$h_x^2\left\|\frac{\partial^4 u^E}{\partial x\partial y^3}\right\|_{L^2(K)}^2$; that is,
here there are two fewer orders of derivative but this is balanced by the additional factor
$\eps^2$ that is present in  Lemma~\ref{consis1-3}.
Thus, like \eqref{consisE1}, we get finally
\begin{align*}
&\left[\sum_{K\in\mathcal{T}_N}\bigg(h_x^2\left\|\frac{\partial^2 E_1}{\partial x^2} \right\|_{L^2(K)}^2
	+h_y^2\left\|\frac{\partial^2 E_1}{\partial x\partial y} \right\|_{L^2(K)}^2
	+h_x^2\left\|\frac{\partial^2 E_1}{\partial x\partial y} \right\|_{L^2(K)}^2
	+h_y^2\left\|\frac{\partial^2 E_1}{\partial y^2} \right\|_{L^2(K)}^2\bigg)\right]^{1/2}\\
&\hspace{2cm} \le C \eps^{1/2}N^{-1},
\end{align*}
and like \eqref{consisE12} we get
\begin{align*}
&\left[\sum_{K\in\mathcal{T}_N}\bigg(h_x^2\left\|\frac{\partial^2 E_{12}}{\partial x^2} \right\|_{L^2(K)}^2
	+h_y^2\left\|\frac{\partial^2 E_{12}}{\partial x\partial y} \right\|_{L^2(K)}^2
	+h_x^2\left\|\frac{\partial^2 E_{12}}{\partial x\partial y} \right\|_{L^2(K)}^2
	+h_y^2\left\|\frac{\partial^2 E_{12}}{\partial y^2} \right\|_{L^2(K)}^2\bigg)\right]^{1/2}\\
&\hspace{2cm}\le C (\eps N^{-1}\ln N + N^{-2}).
\end{align*}
These estimates yield
\begin{align}\label{similar1}
&\left[\sum_{K\in\mathcal{T}_N}\bigg(h_x^2\left\|\frac{\partial^2 u^E}{\partial x^2} \right\|_{L^2(K)}^2
	+h_y^2\left\|\frac{\partial^2 u^E}{\partial x\partial y} \right\|_{L^2(K)}^2
	+h_x^2\left\|\frac{\partial^2 u^E}{\partial x\partial y} \right\|_{L^2(K)}^2
	+h_y^2\left\|\frac{\partial^2 u^E}{\partial y^2} \right\|_{L^2(K)}^2\bigg)\right]^{1/2}\notag\\
&\hspace{2cm}\le C \left(\eps^{1/2}N^{-1}+\eps N^{-1}\ln N + N^{-2}\right).
\end{align}

Combining these bounds, we are done.
\end{proof}

It now remains only to bound the smooth component term $\left|b_N(S,\xi_N - I_N \xi_N)\right|$,
but obtaining a sharp estimate for this term turns out to be troublesome.

For the rectangular Shishkin mesh $\mathcal{T}_N$,
let $e$ be an edge of some mesh element~$K$.
If $e\not\subset\partial\Omega$, then there are two elements $K_L$ and $K_R$ in~$\mathcal{T}_N$ that share $e$ as a common edge.
Let $\omega_e:=K_L\cup K_R$ denote the patch associated with $e$.
Let $|K_L|$ and $|K_R|$ denote the areas of $K_L$ and $K_R$ respectively.
If $|K_L|=|K_R|$, then we call the patch uniform.

Let $\mathcal{E}_N=\mathcal{E}_N^1\bigcup\mathcal{E}_N^2\bigcup\mathcal{E}_N^3$
be the set of all edges of elements in~$\mathcal{T}_N$, where $\mathcal{E}_N^1$ contains the internal edges with uniform patches,
$\mathcal{E}_N^2$ contains the internal edges with nonuniform patches,
and $\mathcal{E}_N^3$ contains the edges in~$\partial\Omega$.

Recall the properties~\eqref{nodeedge} of the Morley basis.
Let $\mathcal{N}_N := \{(x_i,y_j): i,j=0,\dots,N\}$ denote the set of nodes of~$\mathcal{T}_N$.
For each $v_N\in V_N$, we shall treat separately the components of $v_N$ that correspond to nodes or edges.
Thus, set
$V_N^\mathcal{X}:= \left\{v_N\in V_N:\int_e\frac{\partial v_N}{\partial \mathbf{n}_e}ds=0\ \forall\,e\in\mathcal{E}_N \right\}$
and $V_N^\mathcal{E}:= \left\{v_N\in V_N: v_N(x)=0\ \forall\,x\in\mathcal{N}_N \right\}$.

We decompose $V_N^\mathcal{E}$ further. For each $e\in\mathcal{E}_N$, define
$$
V_N^e:= \left\{v_N\in V_N^\mathcal{E}: \int_{e'}\frac{\partial v_N}{\partial \mathbf{n}_{e'}}ds=0
	\ \forall\,e'\in\mathcal{E}_N\text{ with } e'\ne e \right\}.
$$

Set $P^\mathcal{X}(K)={\rm span}\{p_i,1\leqslant i\leqslant 4\}$, where the $p_i$ were defined in Section~\ref{sec:Morley}.
For each mesh element $K$ and each edge $e$ of $K$, set
\[
P^e(K):= \left\{v_N\in P(K): v_N = 0\text{ at each node of }K,
\int_{e'}\frac{\partial v_N}{\partial \mathbf{n}_{e'}}ds = 0 \ \forall  e'\subset\partial K \text{ with }e'\ne e \right\}.
\]
If $e$ is an internal edge with patch $\omega_e = K_L \cup K_R$, set
$$
P^e(\omega_e):= \left\{v_N\in L^2(\Omega):v_N|_K\in P^e(K)\text{ for }K= K_L,K_R
	\text{ and } \int_e\left[\frac{\partial v_N}{\partial{\mathbf{n}_{e}}}\right]\,ds =0  \right\}.
$$

The next two lemmas establish special properties of the rectangular Morley element.

\begin{lemma}\label{lem:super_point}
Let $K\in{\mathcal T}_N$ be a mesh element.
Let $\phi\in P^\mathcal{X}(K)$ and $q_K^1\in Q_1(K)$.
Then
\begin{equation}\label{super_point}
\int_{K} q_K^1 \frac{\partial(\phi-I_N \phi)}{\partial x}\,\mathrm{d}x\,\mathrm{d}y
	= \int_{K} q_K^1 \frac{\partial(\phi-I_N \phi)}{\partial y}\,\mathrm{d}x\,\mathrm{d}y = 0.
\end{equation}
\end{lemma}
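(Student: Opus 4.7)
The plan is to use linearity in $\phi$ and reduce to verifying the two identities for each rectangular Morley basis function $p_i$, $i=1,\dots,4$, since $\phi\in P^{\mathcal X}(K)$ means $\phi = \sum_{i=1}^{4}\phi(a_i)\,p_i$. Throughout, it is convenient to work in the scaled coordinates $\bar x := (x-x_c)/h_x$ and $\bar y := (y-y_c)/h_y$.

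The crucial observation is structural. Expanding each $p_i$ from its definition in Section~\ref{sec:Morley}, one sees that after multiplying out the factor $2\bigl(1\pm\bar x\bigr)\bigl(1\pm\bar y\bigr)$ only a constant, the linear terms, and a $2\bar x\bar y$ cross term remain—no pure $\bar x^2$ or $\bar y^2$ contribution survives. Combined with the $\bar x^3$ and $\bar y^3$ terms already present in the formula, each $p_i$ therefore has the form
\[
p_i \;=\; b_i(x,y) \;+\; \tfrac{1}{8}\alpha_i\bar x^{3} \;+\; \tfrac{1}{8}\beta_i\bar y^{3},
\]
with $b_i\in Q_1(K)$ and $\alpha_i,\beta_i\in\{-1,+1\}$. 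This cancellation is the only nontrivial ingredient of the proof; it would be the main obstacle for a reader meeting the rectangular Morley basis for the first time, but it is a short calculation once attempted.

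Next, since $I_N$ is the bilinear nodal interpolant, $I_N b_i = b_i$, while direct evaluation at the four vertices $(\pm h_x,\pm h_y)$ gives $I_N(\bar x^{3}) = \bar x$ and $I_N(\bar y^{3}) = \bar y$. Hence
\[
p_i - I_N p_i \;=\; \tfrac{1}{8}\alpha_i\bigl(\bar x^{3}-\bar x\bigr) \;+\; \tfrac{1}{8}\beta_i\bigl(\bar y^{3}-\bar y\bigr),
\]
and consequently $\partial_x(p_i - I_N p_i)$ is a function of $x$ alone, proportional to $3\bar x^{2}-1$; symmetrically, $\partial_y(p_i - I_N p_i)$ depends only on $y$.

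Finally, writing $q_K^{1}=c_0+c_1 x+c_2 y+c_3 xy$, integration in $y$ over $[y_c-h_y,y_c+h_y]$ annihilates the $c_2$ and $c_3$ pieces, leaving
\[
\int_K q_K^{1}\,\partial_x(p_i - I_N p_i)\,\mathrm{d}x\,\mathrm{d}y \;=\; C\,h_y\int_{x_c-h_x}^{x_c+h_x}\bigl(c_0+c_1 x\bigr)\bigl(3\bar x^{2}-1\bigr)\,\mathrm{d}x \;=\; 0,
\]
because $3\bar x^{2}-1$ is the degree-$2$ Legendre polynomial on $[-1,1]$ and is orthogonal to $\{1,\bar x\}$. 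The identity for $\partial_y$ follows by the same argument with the roles of $x$ and $y$ interchanged, completing the proof.
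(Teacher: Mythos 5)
Your argument is essentially the paper's: both reduce $\phi - I_N\phi$ to $\operatorname{span}\{\bar x^3-\bar x,\,\bar y^3-\bar y\}$ (you by decomposing each $p_i = b_i + \tfrac18\alpha_i\bar x^3+\tfrac18\beta_i\bar y^3$ with $b_i\in Q_1(K)$ and observing $I_N b_i=b_i$, $I_N\bar x^3=\bar x$; the paper by stating this span directly), and both then invoke the orthogonality of $3\bar x^2-1$ to $\{1,\bar x\}$ on $[-1,1]$. One small slip: integrating $q_K^1=c_0+c_1x+c_2y+c_3xy$ in $y$ over $[y_c-h_y,y_c+h_y]$ does \emph{not} annihilate the $c_2,c_3$ pieces (since $\int_{y_c-h_y}^{y_c+h_y}y\,\mathrm{d}y=2y_ch_y\neq 0$ in general); it instead folds them into a new affine-in-$x$ function, which is still killed by the same orthogonality, so the conclusion is unaffected — but you should either phrase $q_K^1$ in the scaled variables $\bar x,\bar y$ (where the odd pieces do integrate to zero) or simply note that the $y$-integral of $q_K^1$ remains affine in $x$.
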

\begin{proof}
Recall that $(x_{c},y_{c})$ is the center of $K$ and $K$ has dimensions $2h_x \times 2h_y$.
To simplify the presentation, set $\xi_x = (x - x_{c})/h_x$ and $\xi_y = (y - y_{c})/h_y$.
One can deduce from the definitions of the $p_i$ in Section~\ref{sec:Morley}
that for any $\phi\in P^\mathcal{X}(K)$, one has
$$
\phi-I_N\phi\in \textrm{span}\left\{\xi_x(\xi_x^2-1), \xi_y(\xi_y^2-1)\right\}.
$$
Hence
$$
\frac{\partial (\phi-I_N\phi)}{\partial x}\in \textrm{span}\left\{\frac{\partial [\xi_x(\xi_x^2-1)]}{\partial x}\right\}
	= \textrm{span}\left\{\frac{1}{h_x}(3\xi_x^2-1)\right\}
$$
and similarly for $\partial (\phi-I_N\phi)/\partial y$.
But
$$
\int_{-1}^1(3\xi_x^2-1)\,\mathrm{d} \xi_x = \int_{-1}^1 \xi_x (3\xi_x^2-1)\,\mathrm{d} \xi_x = 0,
$$
so $\int_{K} q_K^1 \frac{\partial(\phi-I_N \phi)}{\partial x}\,\mathrm{d}x\,\mathrm{d}y  = 0$ follows.
The other identity in~\eqref{super_point} is proved similarly.
\end{proof}

Let $Q_1(\omega_e)$ denote the space of polynomials of the form $c_0+c_1x+c_2y+c_3xy$ defined on~$\omega_e$
and let $P_0(\omega_e)$ denote the space of constant functions defined on~$\omega_e$.

\begin{lemma}
\label{super_edge}
Let $e\in\mathcal{E}_N$ be an edge.
\begin{enumerate}
\item[(i)]
If  $e\in\mathcal{E}_N^1\bigcup\mathcal{E}_N^2$, its associated patch $\omega_e$ is uniform,
 $\psi\in P^{e}(\omega_e)$ and $q_{\omega_e}^1\in Q_1(\omega_e)$, then
\begin{equation}
\label{eq:super_edge1}
\int_{\omega_e} q_{\omega_e}^1 \frac{\partial(\psi-I_N \psi)}{\partial x}\,\mathrm{d}x\,\mathrm{d}y
	= \int_{\omega_e} q_{\omega_e}^1 \frac{\partial(\psi-I_N \psi)}{\partial y}\,\mathrm{d}x\,\mathrm{d}y = 0.
\end{equation}
\item[(ii)]
If  $e\in\mathcal{E}_N^1\bigcup\mathcal{E}_N^2$, its associated patch $\omega_e$ is nonuniform,
 $\psi\in P^{e}(\omega_e)$ and $p_{\omega_e}^0\in P_0(\omega_e)$, then
\begin{equation}
\label{eq:super_edge2}
\int_{\omega_e} p_{\omega_e}^0 \frac{\partial(\psi-I_N \psi)}{\partial x}\,\mathrm{d}x\,\mathrm{d}y
	= \int_{\omega_e} p_{\omega_e}^0 \frac{\partial(\psi-I_N \psi)}{\partial y}\,\mathrm{d}x\,\mathrm{d}y = 0.
\end{equation}
\item[(iii)]
If $e\in\mathcal{E}_N^3$ with $e\subset \partial K$,  $\psi\in P^{e}(K)$ and $p_K^0\in P_0(K)$, then
\begin{equation}
\label{eq:super_edge3}
\int_{K} p_K^0 \frac{\partial(\psi-I_N \psi)}{\partial x}\,\mathrm{d}x\,\mathrm{d}y
	= \int_{K} p_K^0 \frac{\partial(\psi-I_N \psi)}{\partial y}\,\mathrm{d}x\,\mathrm{d}y = 0.
\end{equation}
\end{enumerate}
\end{lemma}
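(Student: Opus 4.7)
The plan rests on two structural observations. First, every $\psi \in P^e(K)$ vanishes at the four vertices of $K$ by definition, so its bilinear nodal interpolant satisfies $I_N\psi \equiv 0$; the task therefore reduces in each part to evaluating $\int \partial\psi/\partial x$ and $\int \partial\psi/\partial y$ against the prescribed multiplier. Second, the biorthogonality relations~\eqref{nodeedge} force $P^e(K)$ to be one-dimensional and spanned by the single Morley basis function $q_i$ whose associated edge coincides with $e$; each such $q_i$ is a cubic polynomial in a single Cartesian coordinate (either $x$ or $y$) that vanishes at both endpoints of that coordinate on $K$.

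For parts~(ii) and~(iii) the multiplier is a single constant, so it suffices to prove that $\int_K \partial\psi/\partial x\,dx\,dy = \int_K \partial\psi/\partial y\,dx\,dy = 0$ on each element appearing in the integral. One of these two derivatives is identically zero because $\psi|_K$ depends on only one coordinate; the other reduces, via the fundamental theorem of calculus and Fubini, to a difference of boundary values of $q_i$, which vanishes for the same structural reason. Summing over the two elements of the patch in part~(ii) immediately lets the constant $p^0_{\omega_e}$ slip out of the sum.

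The substantive case is part~(i). Without loss of generality suppose $e$ is vertical, so both local shape functions $q_e^L$ and $q_e^R$ depend on $x$ only; hence $\partial\psi/\partial y \equiv 0$ on $\omega_e$ and the second identity in~\eqref{eq:super_edge1} is immediate. For the $\partial/\partial x$ identity I plan to exploit the uniform geometry: because $K_L$ and $K_R$ share the same $y$-center $y_c$ and $y$-half-height $h_y$, the inner $y$-integral of $q^1_{\omega_e}(x,y) = C_0 + C_1 x + C_2 y + C_3 xy$ over each element collapses to $2h_y\,q^1_{\omega_e}(x, y_c)$, which is linear in $x$. Integration by parts in $x$ annihilates the boundary contribution (each $q_e^{L/R}$ vanishes at the two $x$-endpoints of its element) and leaves $-2h_y(C_1 + C_3 y_c)\,c_{L/R} \int q_e^{L/R}(x)\,dx$. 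A direct one-variable computation from the explicit formulas in Section~\ref{sec:Morley} produces the common value $\int q_e^L\,dx = \int q_e^R\,dx = -h_x^2/3$, so both elementwise contributions carry the same prefactor and the sum becomes $(c_L + c_R)$ times a common scalar. The patch jump condition $\int_e[\partial\psi/\partial n_e]\,ds = 0$ then yields exactly $c_L + c_R = 0$, and the identity follows. The main obstacle I anticipate is the sign bookkeeping in this last step: $q_e^L$ is the right-edge function of $K_L$ (a local $q_4$) while $q_e^R$ is the left-edge function of $K_R$ (a local $q_2$), and one must check that the jump convention yields $c_L + c_R = 0$ rather than $c_L = c_R$. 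Once this is pinned down the cancellation is automatic, and the coincidence that $\int q_e^L\,dx = \int q_e^R\,dx$ on uniform patches (which manifestly fails on nonuniform patches, where the integrals become $-(h_x^L)^2/3$ and $-(h_x^R)^2/3$) is precisely the algebraic feature that supports the bilinear orthogonality in part~(i).
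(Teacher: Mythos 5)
Your proof is correct and takes essentially the same approach as the paper. Both reduce part~(i) to a one-dimensional cancellation between $K_L$ and $K_R$ on a uniform patch after observing $\partial_y\psi\equiv 0$, $I_N\psi\equiv 0$, and that the constant part of $q^1_{\omega_e}$ is killed by the fundamental theorem of calculus; you verify the remaining linear-in-$x$ cancellation by integrating by parts and computing $\int q_e^{L/R}\,\mathrm{d}x = -h_x^2/3$ together with $c_L+c_R=0$ from the jump condition, whereas the paper reaches the same conclusion by directly evaluating $\int_{-1}^1\partial_\xi\left[(\xi+1)^2(\xi-1)\right]\xi\,\mathrm{d}\xi + \int_{-1}^1\partial_\xi\left[(\xi+1)(\xi-1)^2\right]\xi\,\mathrm{d}\xi = 0$ with the relative sign of $\psi^{\omega_e}$ on $K_L$ versus $K_R$ already built into its explicit formula.
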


\begin{proof}
Assume that the edge $e$ is parallel to the $y$-coordinate axis.
(The case where $e$ is parallel to the $x$-coordinate axis is similar.)
Let the center of $e$ be the point $(e_{x,c},e_{y,c})$
and let the half-lengths of $K_L$ and $K_R$ in the $x$ direction be $h_x^L$ and $h_x^R$ respectively.
Then
$e = \{(e_{x,c},y): y=e_{y,c}+\xi_y h_y,\;-1\leq \xi_y\leq 1\}$
and
\begin{align*}
K_L &= \{(x,y): x = e_{x,c}+\xi_x h_x^L\text{ for }-2\leq \xi_x\leq 0, \ y=e_{y,c}+\xi_y h_y \text{ for }-1\leq \xi_y\leq 1\}, \\
K_R &= \{(x,y): x = e_{x,c}+\xi_x h_x^R\text{ for }0\leq \xi_x\leq 2, \ y=e_{y,c}+\xi_y h_y\text{ for }-1\leq \xi_y\leq 1\}.
\end{align*}
On $\omega_e$, take increasing $x$ as the positive direction.
Then (see Section~\ref{sec:Morley}) the basis function $\psi^{\omega_e}$ associated with $\omega_e$
is like $q_4$ in $K_L$ and $-q_2$ in $K_R$:
$$
\psi^{\omega_e} =\left\{\begin{array}{ll} \displaystyle
\frac{1}{4}
\left[\frac{x-(e_{x,c}-h_x^L)}{h_x^L}+1\right]^2
\left[\frac{x-(e_{x,c}-h_x^L)}{h_x^L}-1\right] & \mbox{on}\ K_L,
\\
\\
\displaystyle
\frac{1}{4}
\left[\frac{x-(e_{x,c}+h_x^R)}{h_x^R}+1\right]
\left[\frac{x-(e_{x,c}+h_x^R)}{h_x^R}-1\right]^2 & \mbox{on}\ K_R.
\end{array}
\right.
$$
Clearly $\psi^{\omega_e} = 0$ on the right and left edges of $K_L$ and of $K_R$.
Hence $I_N\psi^{\omega_e} \equiv 0$ and one gets
\begin{equation*}
\int_{K_L} p_{K_L}^0 \frac{\partial\left(\psi^{\omega_e}-I_N\psi^{\omega_e}\right)}{\partial x}\,\mathrm{d}x\,\mathrm{d}y
= \int_{K_R} p_{K_R}^0 \frac{\partial\left(\psi^{\omega_e}-I_N\psi^{\omega_e}\right)}{\partial x}\,\mathrm{d}x\,\mathrm{d}y = 0
\end{equation*}
and
\begin{equation*}
\int_{K_L} p_{K_L}^0 \frac{\partial\left(\psi^{\omega_e}-I_N\psi^{\omega_e}\right)}{\partial y}\,\mathrm{d}x\,\mathrm{d}y
= \int_{K_R} p_{K_R}^0 \frac{\partial\left(\psi^{\omega_e}-I_N\psi^{\omega_e}\right)}{\partial y}\,\mathrm{d}x\,\mathrm{d}y = 0,
\end{equation*}
from which \eqref{eq:super_edge2} and \eqref{eq:super_edge3} follow.

One also has $\partial\psi^{\omega_e}/\partial y \equiv 0$, so
$\int_{\omega_e} q_{\omega_e}^1 \frac{\partial\left(\psi^{\omega_e}-I_N\psi^{\omega_e}\right)}{\partial y}
	\,\mathrm{d}x \, \mathrm{d}y = 0$.
Moreover, if $h_x^L = h_x^R$ then from
\begin{equation*}
\int_{-1}^1 \frac{\partial [(\xi_x+1)^2(\xi_x-1)]}{\partial \xi_x} \xi_x \,\mathrm{d}\xi_x
	+ \int_{-1}^1 \frac{\partial [(\xi_x+1)(\xi_x-1)^2]}{\partial \xi_x} \xi_x \,\mathrm{d}\xi_x = 0
\end{equation*}
we get
$\int_{\omega_e} q_{\omega_e}^1 \frac{\partial\left(\psi^{\omega_e}-I_N\psi^{\omega_e}\right)}{\partial x}
	\,\mathrm{d}x\,\mathrm{d}y = 0$,
which yields~\eqref{eq:super_edge1}.
\end{proof}

Let $I_{\omega_e}: C(\omega_e)\rightarrow Q_1(\omega_e)$ denote
the bilinear nodal interpolation operator
on the patch~$\omega_e$ that, given $v\in C(\omega_e)$,  satisfies $I_{\omega_e} v(P)=v(P)$
for each of the four vertices $P$ of~$\omega_e$ that do not lie on~$e$.
Let $P_{\omega_e}^0: L^2(\omega_e)\rightarrow P_0(\omega_e)$ be the $L^2$ projector on patch $\omega_e$, viz.,
\begin{equation*}
(v,w_N)_{\omega_e} = (P_{\omega_e}^0 v,w_N)_{\omega_e}\quad \forall v\in L^2(\omega_e), \, \forall w_N\in P_0(\omega_e).
\end{equation*}

\begin{lemma}\label{trun-4S}
There exists a constant~$C$ such that for all $\xi_N\in V_N$, one has
\[
\left|b_N(S,\xi_N - I_N \xi_N)\right|\le C N^{-3/2} \|\xi_N\|_{\eps,N}.
\]
\end{lemma}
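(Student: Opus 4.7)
The bound $O(N^{-3/2})$ is delicate because both $c$ and $S$ are smooth with $O(1)$ derivatives, so a naive use of the anisotropic $L^2$-projection estimate~\eqref{L2pro} together with Cauchy-Schwarz yields only $O(N^{-1})$. The plan is to squeeze the extra half-power of $N$ out of this term by splitting $\xi_N$ according to its Morley degrees of freedom and invoking the orthogonality identities of Lemmas~\ref{lem:super_point} and~\ref{super_edge}, which are designed precisely to annihilate the leading piecewise-bilinear or constant moments of $c\nabla S$ under $\nabla(\psi-I_N\psi)$.

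First I would decompose $\xi_N = \xi_N^{\mathcal{X}} + \sum_{e\in\mathcal{E}_N} \xi_N^e$ using the direct-sum decomposition $V_N = V_N^{\mathcal{X}}\oplus \bigoplus_{e\in\mathcal{E}_N} V_N^e$ dictated by the $\{p_i\}\cup\{q_j\}$ basis of Section~\ref{sec:Morley}, so that $\xi_N^{\mathcal{X}}|_K\in P^{\mathcal{X}}(K)$ on every $K$ and each $\xi_N^e$ is supported on the patch~$\omega_e$ with $\xi_N^e|_K\in P^e(K)$. A local basis-size computation, combined with the inverse inequalities~\eqref{inverse1}--\eqref{inverse2} and the $H^1$-stability~\eqref{H1stab} of $I_N$, would deliver the stability bound
$$
\|\nabla_N(\xi_N^{\mathcal{X}}-I_N\xi_N^{\mathcal{X}})\|_{L^2(\Omega)}^2 + \sum_{e\in\mathcal{E}_N}\|\nabla_N(\xi_N^e-I_N\xi_N^e)\|_{L^2(\omega_e)}^2 \le C\|\xi_N\|_{\eps,N}^2.
$$
Correspondingly I split
$$
b_N(S,\xi_N-I_N\xi_N) = \sum_{K\in\mathcal{T}_N}\int_K c\nabla S\cdot\nabla(\xi_N^{\mathcal{X}}-I_N\xi_N^{\mathcal{X}})\,\mathrm{d}x\,\mathrm{d}y + \sum_{e\in\mathcal{E}_N}\int_{\omega_e} c\nabla S\cdot\nabla(\xi_N^e-I_N\xi_N^e)\,\mathrm{d}x\,\mathrm{d}y
$$
and attack each piece separately.

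On each piece I subtract from $c\nabla S$ the strongest polynomial the corresponding orthogonality lemma allows. For the nodal term, Lemma~\ref{lem:super_point} lets me subtract the elementwise $Q_1$ nodal interpolant of $c\nabla S$; for uniform-patch edges ($e\in\mathcal{E}_N^1$), Lemma~\ref{super_edge}(i) permits the $Q_1(\omega_e)$ interpolant on the patch. Since $c\nabla S\in C^2(\bar\Omega)$ with $O(1)$ second derivatives and $h_x,h_y\le CN^{-1}$ everywhere on the Shishkin mesh, the standard anisotropic bilinear interpolation bound gives an $L^2$ error of order $h_{K,\max}^2=O(N^{-2})$ per element or patch, and a Cauchy-Schwarz combined with the stability bound above shows that these pieces contribute only $O(N^{-2})\|\xi_N\|_{\eps,N}$, which is already sharper than needed. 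For the nonuniform transition patches ($e\in\mathcal{E}_N^2$) and the boundary edges ($e\in\mathcal{E}_N^3$), only the weaker $P_0$ orthogonality of Lemma~\ref{super_edge}(ii)--(iii) is available, so I can only subtract the $L^2$ projection onto constants; the per-patch estimate coming from the anisotropic form of~\eqref{L2pro} is $\|c\nabla S - P_{\omega_e}^0(c\nabla S)\|_{L^2(\omega_e)} \le C(h_{x,\omega_e}+h_{y,\omega_e})|\omega_e|^{1/2}=O(N^{-2})$.

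The final accounting drives the overall rate. Since $|\mathcal{E}_N^2|$ and $|\mathcal{E}_N^3|$ are both $O(N)$ (four Shishkin transition lines and four boundary sides, each carrying at most $N$ edges), one obtains
$$
\sum_{e\in\mathcal{E}_N^2\cup\mathcal{E}_N^3}\|c\nabla S - P^0(c\nabla S)\|_{L^2(\omega_e)}^2 \le C\cdot N\cdot (N^{-2})^2 = CN^{-3},
$$
so a last Cauchy-Schwarz with the stability bound yields exactly the claimed factor $CN^{-3/2}\|\xi_N\|_{\eps,N}$. The main obstacle is the rigorous justification of the stability estimate for the $V_N^{\mathcal{X}}\oplus\bigoplus_e V_N^e$ decomposition, which I expect to follow from an elementwise computation on the explicit Morley basis $\{p_i,q_j\}$ combined with inverse inequalities; once this is in hand, the patch-size bookkeeping on each transition line (where the two adjacent element widths differ by a factor $\sim\eps\ln N$ but the patch diameter remains $O(N^{-1})$ because the coarse side dominates) is routine, and every other estimate reduces to standard anisotropic interpolation of smooth data.
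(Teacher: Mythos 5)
Your proposal follows essentially the same route as the paper: the same decomposition $\xi_N=\xi_N^{\mathcal{X}}+\sum_{e\in\mathcal{E}_N}\xi_N^e$, the same use of Lemmas~\ref{lem:super_point} and~\ref{super_edge} to subtract bilinear interpolants (on elements and on uniform patches) or piecewise constants (on nonuniform patches), and the same $O(N)$-edge counting that produces the $N^{-3/2}$ factor from the transition-region patches. The only things to note are that the stability estimate you flag as the main remaining gap is not re-proved in the paper but cited as \eqref{basis_bound} from \cite[Lemma~3.6]{MengYangZhangSCM16}, and that the paper disposes of $\mathcal{E}_N^3$ more cleanly by observing that the definition of $V_N$ forces $\xi_N^e=0$ for every boundary edge $e$, so your separate treatment of those edges via Lemma~\ref{super_edge}(iii) is harmless but unnecessary.
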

\begin{proof}
For each $\xi_N\in V_N$,  by \cite[Lemma 3.6]{MengYangZhangSCM16}
there exist $\xi_N^\mathcal{X}\in V_N^\mathcal{X}$ and $\xi_N^e\in V_N^e \,\forall e\in\mathcal{E}_N$ such that
$\xi_N=\xi_N^\mathcal{X}+\sum_{e\in\mathcal{E}_N} \xi_N^e $ and
\begin{equation}\label{basis_bound}
|\xi_N^\mathcal{X}|_{1,N}^2+\sum_{e\in\mathcal{E}_h}|\xi_N^e|_{1,N}^2
\le C|\xi_N|_{1,N}^2.
\end{equation}
Consequently
\begin{align}
b_N(S,\xi_N - I_N\xi_N)
&=\sum_{K\in \mathcal{T}_N}\int_{K} c \nabla S\,\nabla (\xi_N-I_N \xi_N)\,\mathrm{d}x\,\mathrm{d}y\notag\\
&=\sum_{K\in \mathcal{T}_N}\int_{K} c \nabla S\,\nabla (\xi_N^\mathcal{X}-I_N\xi_N^\mathcal{X})\,\mathrm{d}x\,\mathrm{d}y\notag\\
&\quad + \sum_{e\in\mathcal{E}_N}\sum_{K\in \mathcal{T}_N}\int_{K} c \nabla S\,\nabla (\xi_N^e-I_N \xi_N^e)\,\mathrm{d}x\,\mathrm{d}y.\label{sep_FEM}
\end{align}
We will deal with the right-hand side terms one by one.
First, using \eqref{super_point}, Cauchy-Schwarz inequalities and \eqref{priS} one has
\begin{align}
\left|\sum_{K\in \mathcal{T}_N}\int_{K} c \nabla S \nabla (\xi_N^\mathcal{X}-I_N \xi_N^\mathcal{X})\,\mathrm{d}x\,\mathrm{d}y\right|
&\le\left|\sum_{K\in \mathcal{T}_N}\int_{K}\left[c \nabla S-I_N\left(c \nabla S\right)\right]\nabla (\xi_N^\mathcal{X}-I_N \xi_N^\mathcal{X})\,\mathrm{d}x\,\mathrm{d}y \right| \notag\\
&\hspace{-20mm}\le C\left(\sum_{K\in\mathcal{T}_N} \left\|c \nabla S-I_N\left(c \nabla S\right)\right\|_{L^2(K)}^2\right)^{1/2}
\left(\sum_{K\in\mathcal{T}_N}\|\nabla \xi_N^\mathcal{X}\|_{L^2(K)}^2\right)^{1/2}\notag\\
&\hspace{-20mm}\le C N^{-2}|\xi_N^\mathcal{X}|_{1,N}.\label{casenode}
\end{align}
For the terms in $ \sum_{e\in\mathcal{E}_N}\dots$ in~\eqref{sep_FEM}, consider the three cases $e\in\mathcal{E}_N^1$,
$e\in\mathcal{E}_N^2$ and $e\in\mathcal{E}_N^3$.
The definition of $V_N$ implies that $\xi_N^e=0$ if $e\in\mathcal{E}_N^3$, so we need only consider
$e\in\mathcal{E}_N^1$ and $e\in\mathcal{E}_N^2$.

Using \eqref{eq:super_edge1}, Cauchy-Schwarz inequalities and \eqref{priS} we get
\begin{align}
\left|\sum_{e\in\mathcal{E}_N^1}\int_{\omega_e} c \nabla S \nabla (\xi_N^e-I_N \xi_N^e)\,\mathrm{d}x\,\mathrm{d}y\right|
&\le\left|\sum_{e\in\mathcal{E}_N^1}\int_{\omega_e}\left[c \nabla S-I_{\omega_e}\left(c \nabla S\right)\right]\nabla (\xi_N^e-I_N \xi_N^e)\,\mathrm{d}x\,\mathrm{d}y \right| \notag\\
&\hspace{-15mm}\le C \left(\sum_{e\in\mathcal{E}_N^1} \left\|c \nabla S-I_{\omega_e}\left(c \nabla S\right)\right\|_{L^2(\omega_e)}^2\right)^{1/2}
\left(\sum_{e\in\mathcal{E}_N^1}|\xi_N^e|_{1,N}^2\right)^{1/2}\notag\\
&\hspace{-15mm}\le C N^{-2}
\left(\sum_{e\in\mathcal{E}_N^1}|\xi_N^e|_{1,N}^2\right)^{1/2}.\label{caseedge1}
\end{align}

Using \eqref{eq:super_edge2}, \eqref{priS}, \eqref{L2pro} and \eqref{L2inter}, we obtain
\begin{align}
\left|\sum_{e\in\mathcal{E}_N^2}\int_{\omega_e} c \nabla S \nabla (\xi_N^e-I_N \xi_N^e)\,\mathrm{d}x\,\mathrm{d}y\right|
&\le\left|\sum_{e\in\mathcal{E}_N^2}\int_{\omega_e}\left[c \nabla S-P_{\omega_e}^0\left(c \nabla S\right)\right]\nabla (\xi_N^e-I_N \xi_N^e)\,\mathrm{d}x\,\mathrm{d}y \right| \notag\\
&\hspace{-15mm}\le C \left(\sum_{e\in\mathcal{E}_N^2} \left\|c \nabla S-P_{\omega_e}^0\left(c \nabla S\right)\right\|_{L^2(\omega_e)}^2\right)^{1/2}
\left(\sum_{e\in\mathcal{E}_N^2}|\xi_N^e|_{1,N}^2\right)^{1/2}\notag\\
&\hspace{-15mm}\le C N^{-1} \left(\sum_{e\in\mathcal{E}_N^2} \left|c \nabla S\right|_{H^1(\omega_e)}^2\right)^{1/2}
\left(\sum_{e\in\mathcal{E}_N^2}|\xi_N^e|_{1,N}^2\right)^{1/2}\notag\\
&\hspace{-15mm}\le C N^{-3/2}\left(\sum_{e\in\mathcal{E}_N^2}|\xi_N^e|_{1,N}^2\right)^{1/2},\label{caseedge2}
\end{align}
as  $\text{meas}(\omega_e) = O(N^{-2})$ and there are $O(N)$ edges in $\mathcal{E}_N^2$.

Combining \eqref{sep_FEM}--\eqref{caseedge2} then recalling \eqref{basis_bound} gives
$\left|b_N(S,\xi_N - I_N \xi_N)\right|\le C N^{-3/2} \|\xi_N\|_{\eps,N}$.
\end{proof}

\begin{lemma}\label{lem:ftrunc}
There exists a constant~$C$ such that for all $\xi_N\in V_N$ one has
\[
\left|\left(f,\xi_N - I_N \xi_N\right)\right|\le C N^{-3/2} \|\xi_N\|_{\eps,N}.
\]
\end{lemma}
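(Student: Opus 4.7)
The strategy will mirror the proof of Lemma~\ref{trun-4S}. First, I will invoke the decomposition $\xi_N = \xi_N^\mathcal{X} + \sum_{e\in\mathcal{E}_N}\xi_N^e$ from \cite[Lemma 3.6]{MengYangZhangSCM16} together with the stability bound \eqref{basis_bound}. Since each edge basis function vanishes at every node, one has $I_N\xi_N^e = 0$, and also $\xi_N^e \equiv 0$ whenever $e \in \mathcal{E}_N^3$; so it remains to bound
\[
(f, \xi_N^\mathcal{X} - I_N\xi_N^\mathcal{X}) + \sum_{e\in\mathcal{E}_N^1 \cup \mathcal{E}_N^2}(f, \xi_N^e).
\]

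The crucial ingredient will be two $L^2$-orthogonality identities against constants. For the nodal part, the explicit description $\phi - I_N\phi \in \mathrm{span}\{\xi_x(\xi_x^2 - 1),\xi_y(\xi_y^2 - 1)\}$ found inside the proof of Lemma~\ref{lem:super_point} is odd in each variable separately, hence $\int_K(\xi_N^\mathcal{X} - I_N\xi_N^\mathcal{X})\,\mathrm{d}x\,\mathrm{d}y = 0$ for every $K \in \mathcal{T}_N$. For the edge part on a uniform patch, direct integration of the explicit basis $\psi^{\omega_e}$ displayed in the proof of Lemma~\ref{super_edge} shows that $\int_{\omega_e}\xi_N^e\,\mathrm{d}x\,\mathrm{d}y$ is a multiple of the difference between the two mesh widths transverse to $e$, which vanishes precisely when $e \in \mathcal{E}_N^1$. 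These identities let me subtract $P_K^0 f$ and $P_{\omega_e}^0 f$ respectively at no cost. A short scaling argument based on the same explicit forms supplies the anisotropic bound $\|\xi_N^\mathcal{X} - I_N\xi_N^\mathcal{X}\|_{L^2(K)} \le C\bigl(h_x\|\partial_x\xi_N^\mathcal{X}\|_{L^2(K)} + h_y\|\partial_y\xi_N^\mathcal{X}\|_{L^2(K)}\bigr)$ and the Poincar\'e-type estimate $\|\xi_N^e\|_{L^2(\omega_e)} \le CN^{-1}|\xi_N^e|_{1,N}$, the factor $N^{-1}$ stemming from the mesh width in the direction perpendicular to $e$ (since $\psi^{\omega_e}$ depends only on that variable).

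Combining these with \eqref{L2pro} applied to $f \in C^1(\bar\Omega)$, Cauchy--Schwarz over elements and over patches, and $h_x, h_y \le 2N^{-1}$ will yield contributions of order $O(N^{-2})\|\xi_N\|_{\eps,N}$ from both the nodal term and the uniform-edge terms, exactly along the lines of \eqref{casenode}--\eqref{caseedge1}. For $e \in \mathcal{E}_N^2$ no constant can be subtracted; I will instead estimate $|(f,\xi_N^e)|$ by plain Cauchy--Schwarz using $\|f\|_{L^2(\omega_e)} \le C|\omega_e|^{1/2} \le CN^{-1}$ together with the same Poincar\'e-type bound, producing $CN^{-2}|\xi_N^e|_{1,N}$ per edge; a final Cauchy--Schwarz sum over the $|\mathcal{E}_N^2| = O(N)$ nonuniform edges then inserts an $N^{1/2}$, giving the dominant $CN^{-3/2}\|\xi_N\|_{\eps,N}$ term. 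The main obstacle, exactly as in Lemma~\ref{trun-4S}, is the nonuniform-patch case where orthogonality fails; the rescue comes only from the scarcity ($O(N)$ edges) and the smallness ($|\omega_e| = O(N^{-2})$) of those patches.
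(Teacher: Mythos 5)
Your proposal is correct and follows essentially the same route as the paper: decompose $\xi_N$ via \cite[Lemma 3.6]{MengYangZhangSCM16}, exploit orthogonality of $\xi_N^\mathcal{X}-I_N\xi_N^\mathcal{X}$ and $\xi_N^e$ (uniform patch) against constants to subtract $P_K^0 f$ and $P_{\omega_e}^0 f$, and absorb the nonuniform patches by brute-force Cauchy--Schwarz using their scarcity ($O(N)$) and small measure ($O(N^{-2})$). The only (harmless) variation is that you obtain the two constant-orthogonality identities by direct integration of the explicit basis forms, whereas the paper derives them by integrating by parts in Lemmas~\ref{lem:super_point} and~\ref{super_edge}.
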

\begin{proof}
Using the decomposition of $\xi_N\in V_N$ in~\eqref{basis_bound}, we have
\begin{equation}\label{sep_FEM_f}
(f,\xi_N - I_N\xi_N) = \sum_{K\in \mathcal{T}_N}\int_{K} f (\xi_N^\mathcal{X}-I_N\xi_N^\mathcal{X})\,\mathrm{d}x\,\mathrm{d}y
	+ \sum_{e\in\mathcal{E}_N}\sum_{K\in \mathcal{T}_N}\int_{K} f (\xi_N^e-I_N \xi_N^e)\,\mathrm{d}x\,\mathrm{d}y.
\end{equation}
We will deal with these  terms one by one.

Integrating by parts in \eqref{super_point}, one sees that on each element $K\in{\mathcal T}_N$,
for every $\phi\in P^\mathcal{X}(K)$ and every $p_K^0\in P_0(K)$ one has
$\int_{K} p_K^0 (\phi-I_N \phi)\,\mathrm{d}x\,\mathrm{d}y = 0$.
Hence we have
\begin{align}
\left|\sum_{K\in \mathcal{T}_N}\int_{K} f(\xi_N^\mathcal{X}-I_N \xi_N^\mathcal{X})\,\mathrm{d}x\,\mathrm{d}y\right|
&= \left|\sum_{K\in \mathcal{T}_N}\int_{K}\left(f-P_K^0f\right) (\xi_N^\mathcal{X}-I_N \xi_N^\mathcal{X})\,\mathrm{d}x\,\mathrm{d}y \right| \notag\\
&\hspace{-15mm}\le C\left(\sum_{K\in\mathcal{T}_N} \left\|f-P_K^0f\right\|_{L^2(K)}^2\right)^{1/2}
\left(\sum_{K\in\mathcal{T}_N}\|\xi_N^\mathcal{X}-I_N \xi_N^\mathcal{X}\|_{L^2(K)}^2\right)^{1/2}\notag\\
&\hspace{-15mm}\le C N^{-2}|\xi_N^\mathcal{X}|_{1,N}, \label{casenode_f}
\end{align}
where we used \eqref{L2pro} and \eqref{L2inter}.

Similarly, integrating by parts in \eqref{eq:super_edge1}, we find that if  $e\in\mathcal{E}_N^1\bigcup\mathcal{E}_N^2$, its associated patch $\omega_e$ is uniform, $\psi\in P^{e}(\omega_e)$ and $p_{\omega_e}^0\in P_0(\omega_e)$, then
$\int_{\omega_e} p_{\omega_e}^0 (\psi-I_N \psi)\,\mathrm{d}x\,\mathrm{d}y = 0$.
Hence
\begin{align}
\left|\sum_{e\in\mathcal{E}_N^1}\int_{\omega_e} f (\xi_N^e-I_N \xi_N^e)\,\mathrm{d}x\,\mathrm{d}y\right|
&\le\left|\sum_{e\in\mathcal{E}_N^1}\int_{\omega_e}
\left(f-P_{\omega_e}^0f\right)(\xi_N^e-I_N \xi_N^e)\,\mathrm{d}x\,\mathrm{d}y \right| \notag\\
&\hspace{-15mm}\le C \left(\sum_{e\in\mathcal{E}_N^1} \left\|f-P_{\omega_e}^0f\right\|_{L^2(\omega_e)}^2\right)^{1/2}
\left(\sum_{e\in\mathcal{E}_N^1}\left\|\xi_N^e-I_N \xi_N^e\right\|_{L^2(\omega_e)}^2\right)^{1/2}\notag\\
&\hspace{-15mm}\le C N^{-2}
\left(\sum_{e\in\mathcal{E}_N^1}|\xi_N^e|_{1,N}^2\right)^{1/2}, \label{caseedge1_f}
\end{align}
using a variant of \eqref{L2pro} to bound $\left\|f-P_{\omega_e}^0f\right\|_{L^2(\omega_e)}$
and the definition of $V_N^\mathcal{E}$ to deduce that  $I_N \xi_N^e = 0$.

Using Cauchy-Schwarz inequalities and \eqref{L2inter} we have
\begin{align}
\left|\sum_{e\in\mathcal{E}_N^2}\int_{\omega_e} f (\xi_N^e-I_N \xi_N^e)\,\mathrm{d}x\,\mathrm{d}y\right|
&\le C \left(\sum_{e\in\mathcal{E}_N^2} \left\|f\right\|_{L^2(\omega_e)}^2\right)^{1/2}
\left(\sum_{e\in\mathcal{E}_N^2}|\xi_N^e-I_N \xi_N^e|_{L^2(\omega_e)}^2\right)^{1/2}\notag\\
&\hspace{-15mm}\le C N^{-1} \left(\sum_{e\in\mathcal{E}_N^2}\left\|f\right\|_{L^2(\omega_e)}^2\right)^{1/2}
\left(\sum_{e\in\mathcal{E}_N^2}|\xi_N^e|_{1,N}^2\right)^{1/2}\notag\\
&\hspace{-15mm}\le C N^{-3/2}\left(\sum_{e\in\mathcal{E}_N^2}|\xi_N^e|_{1,N}^2\right)^{1/2},\label{caseedge2_f}
\end{align}
as $\text{meas}(\omega_e) = O(N^{-2})$ and there are $O(N)$ edges in $\mathcal{E}_N^2$.

Combining \eqref{casenode_f}, \eqref{caseedge1_f} and \eqref{caseedge2_f} with \eqref{sep_FEM_f} and using \eqref{basis_bound} we could get
\[
\left|\left(f,\xi_N - I_N \xi_N\right)\right|\le C N^{-3/2} \|\xi_N\|_{\eps,N}.
\]
\end{proof}

\begin{lemma}\label{trun-2}
Let $v_N, \xi_N\in V_N$ be arbitrary and set $\eta_N=u-v_N$.
Then there exists a constant~$C$, which is independent of $v_N$ and $\xi_N$, such that
\[
\eps^2\left|a_N(\eta_N,\xi_N)\right|\le C \eps\left(\sum_{K\in\mathcal{T}_N}\|\Delta_N\eta_N\|_{L^2(K)}^2\right)^{1/2} \|\xi_N\|_{\eps,N}.
\]
\end{lemma}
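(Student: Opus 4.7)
The plan is to bound $a_N(\eta_N,\xi_N)$ by direct application of the Cauchy--Schwarz inequality, then absorb one power of $\eps$ into the energy norm of $\xi_N$. There is no mesh-dependent or layer-dependent difficulty here; this is essentially a continuity estimate that prepares the term $\eps^2 a_N(\eta_N,\xi_N)$ in the error equation~\eqref{errorequation} for combination with an interpolation error bound for $\eta_N$ (which will be proved subsequently on the Shishkin mesh).

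The first step is to expand the definition
\[
a_N(\eta_N,\xi_N) = \sum_{K\in\mathcal{T}_N}(\Delta_N \eta_N,\Delta_N \xi_N)_{L^2(K)}
\]
and apply Cauchy--Schwarz element by element, then apply the discrete Cauchy--Schwarz inequality to the sum over $K\in\mathcal{T}_N$. This yields
\[
|a_N(\eta_N,\xi_N)| \le \left(\sum_{K\in\mathcal{T}_N}\|\Delta_N \eta_N\|_{L^2(K)}^2\right)^{1/2}\left(\sum_{K\in\mathcal{T}_N}\|\Delta_N \xi_N\|_{L^2(K)}^2\right)^{1/2}.
\]
By the definition of the seminorm, the second factor is exactly $|\xi_N|_{2,N}$.

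Next I would multiply through by $\eps^2$ and split it as $\eps\cdot\eps$. Since $\|\xi_N\|_{\eps,N}^2 = \eps^2|\xi_N|_{2,N}^2 + |\xi_N|_{1,N}^2$, one immediately has $\eps|\xi_N|_{2,N} \le \|\xi_N\|_{\eps,N}$. Therefore
\[
\eps^2|a_N(\eta_N,\xi_N)| \le \eps\left(\sum_{K\in\mathcal{T}_N}\|\Delta_N \eta_N\|_{L^2(K)}^2\right)^{1/2}\cdot\bigl(\eps|\xi_N|_{2,N}\bigr) \le \eps\left(\sum_{K\in\mathcal{T}_N}\|\Delta_N \eta_N\|_{L^2(K)}^2\right)^{1/2}\|\xi_N\|_{\eps,N},
\]
which is the claimed estimate. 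There is no real obstacle in this lemma; the only point worth flagging is the deliberate decision to pair $\eps$ with $|\xi_N|_{2,N}$ rather than keep $\eps^2|\xi_N|_{2,N}$, so that on the right-hand side we obtain the full energy norm $\|\xi_N\|_{\eps,N}$ (with a single factor of $\eps$ multiplying the $\Delta_N\eta_N$ term), matching the form needed in the subsequent energy argument leading to Theorem~\ref{thm:cgce}.
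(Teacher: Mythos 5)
Your proof is correct and follows essentially the same argument as the paper: Cauchy--Schwarz on each element followed by the discrete Cauchy--Schwarz over the sum gives the factor $|\xi_N|_{2,N}$, and then the observation $\eps|\xi_N|_{2,N}\le\|\xi_N\|_{\eps,N}$ (from the definition of the energy norm) completes the bound. Nothing to add.
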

\begin{proof}
Two Cauchy-Schwarz inequalities and the definition of $|\cdot|_{2,N}$ give
\begin{align*}
\left|a_N(\eta_N,\xi_N)\right|
	&=\left|\sum_{K\in\mathcal{T}_N}\int_K \left(\Delta_N\eta_N\right) \left(\Delta_N \xi_N\right) \,\mathrm{d}x\,\mathrm{d}y\right|\\
&\le C\left(\sum_{K\in\mathcal{T}_N}\|\Delta_N\eta_N\|_{L^2(K)}^2\right)^{1/2}
\left(\sum_{K\in\mathcal{T}_N}\|\Delta_N\xi_N\|_{L^2(K)}^2\right)^{1/2}\\
&\le C\left(\sum_{K\in\mathcal{T}_N}\|\Delta_N\eta_N\|_{L^2(K)}^2\right)^{1/2}
|\xi_N|_{2,N}.
\end{align*}
The result now follows from the definition of $\|\xi_N\|_{\eps,N}$.
\end{proof}

\begin{lemma}\label{trun-3}
Let $v_N, \xi_N\in V_N$ be arbitrary and set $\eta_N=u-v_N$.
Then there exists a constant~$C$, which is independent of $v_N$ and $\xi_N$, such that
\begin{align*}
\left|b_N(\eta_N,\xi_N)\right|\le C \left(\sum_{K\in\mathcal{T}_N}\|\nabla_N\eta_N\|_{L^2(K)}^2\right)^{1/2} \|\xi_N\|_{\eps,N}.
\end{align*}
\end{lemma}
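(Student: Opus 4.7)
The plan is to mirror the proof of Lemma~\ref{trun-2} almost verbatim, replacing the second-order operator $\Delta_N$ with the first-order operator $\nabla_N$, and to exploit the boundedness of the coefficient~$c$.

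First I would unpack the definition $b_N(\eta_N,\xi_N) = (c\nabla_N\eta_N, \nabla_N\xi_N)$ and write this as a sum over elements~$K$. Since $c\in C(\bar\Omega)$, we have $|c(x,y)| \le \|c\|_{L^\infty(\Omega)} \le C$ uniformly on~$\bar\Omega$, so
\[
|b_N(\eta_N,\xi_N)| \le C\sum_{K\in\mathcal{T}_N}\int_K |\nabla\eta_N|\,|\nabla\xi_N|\,\mathrm{d}x\,\mathrm{d}y.
\]
Two Cauchy-Schwarz inequalities (first on each $K$, then on the sum over $K$) give
\[
|b_N(\eta_N,\xi_N)| \le C\left(\sum_{K\in\mathcal{T}_N}\|\nabla_N\eta_N\|_{L^2(K)}^2\right)^{1/2}
\left(\sum_{K\in\mathcal{T}_N}\|\nabla_N\xi_N\|_{L^2(K)}^2\right)^{1/2}.
\]

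Finally I would identify the second factor with $|\xi_N|_{1,N}$ by the definition of the broken semi-norm, and then use the inequality $|\xi_N|_{1,N}\le \|\xi_N\|_{\eps,N}$ which follows immediately from the definition $\|\xi_N\|_{\eps,N}^2 = \eps^2|\xi_N|_{2,N}^2 + |\xi_N|_{1,N}^2$. This yields the desired bound.

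There is no real obstacle here: unlike the consistency-error lemmas above, this result involves no integration by parts, no boundary terms, no jumps across edges, and no appeal to anisotropic estimates. The entire argument reduces to applying Cauchy-Schwarz while absorbing the $L^\infty$ bound on~$c$ into the generic constant, in complete analogy with Lemma~\ref{trun-2}.
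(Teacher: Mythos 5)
Your proposal is correct and follows the same route as the paper's proof: two Cauchy--Schwarz inequalities plus the observation that $|\xi_N|_{1,N}\le\|\xi_N\|_{\eps,N}$. Your explicit handling of the factor $c$ via $\|c\|_{L^\infty(\Omega)}$ is a small clarification, since the paper silently absorbs it into the generic constant~$C$.
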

\begin{proof}
Two Cauchy-Schwarz inequalities and the definition of $|\xi_N|_{\eps,N}$ give
\begin{align*}
\left|b_N(\eta_N,\xi_N)\right|
&=\left|\sum_{K\in\mathcal{T}_N}\int_K\left(\nabla_N\eta_N\right)
\left(\nabla_N \xi_N\right) \,\mathrm{d}x\,\mathrm{d}y\right|\\
&\le C\left(\sum_{K\in\mathcal{T}_N}\|\nabla_N\eta_N\|_{L^2(K)}^2\right)^{1/2}
\left(\sum_{K\in\mathcal{T}_N}\|\nabla_N \xi_N\|_{L^2(K)}^2\right)^{1/2}\\
&\le C\left(\sum_{K\in\mathcal{T}_N}\|\nabla_N\eta_N\|_{L^2(K)}^2\right)^{1/2}
\|\xi_N\|_{\eps,N}.
\end{align*}
\end{proof}


%
%
\subsection{Reduced Morley interpolation}\label{sec:reducedMorley}
To finish the convergence analysis, we define an interpolation operator $\Pi_{K}^-$
that uses reduced forms of the basis functions
$p_i, q_i \,(i=1,2,3,4)$ of Section~\ref{sec:Morley}. One does not use the full basis functions because  anisotropic equivalents of Lemmas~\ref{lem4.9} and~\ref{lem4.10} are not known for them.

As in \cite{ShiXieNMPDE10}, on each mesh rectangle $K$ as in Figure~\ref{fig:DOF}
and each $v\in C^1(K)$,
define the reduced Morley interpolant $\Pi_{K}^-v(x,y)$ for $(x,y)\in K$ by
\[
\Pi_{K}^- v(x,y) := \sum_{i = 1}^{4}  v(a_i) p_i^-(x,y)
	+ \sum_{j = 1}^{4} \left(\frac{1}{|e_j|}\int_{e_j}\frac{\partial v}{\partial {n}_{e_j}}\,ds\right) q_j^- (x,y) ,
\]
where  we set
\begin{equation*}
\left\{
\begin{aligned}
&p_1^-(x,y)  := \frac{1}{8}\left[2\left(1-\frac{x-x_{c}}{h_x}\right)\left(1-\frac{y-y_{c}}{h_y}\right)
-\frac{x-x_{c}}{h_x}
-\frac{y-y_{c}}{h_y}\right],\\
&p_2^-(x,y)  := \frac{1}{8}\left[2\left(1-\frac{x-x_{c}}{h_x}\right)\left(1+\frac{y-y_{c}}{h_y}\right)
-\frac{x-x_{c}}{h_x}
+\frac{y-y_{c}}{h_y}\right],\\
&p_3^-(x,y)  := \frac{1}{8}\left[2\left(1+\frac{x-x_{c}}{h_x}\right)\left(1+\frac{y-y_{c}}{h_y}\right)
+\frac{x-x_{c}}{h_x}
+\frac{y-y_{c}}{h_y}\right],\\
&p_4^-(x,y)  := \frac{1}{8}\left[2\left(1+\frac{x-x_{c}}{h_x}\right)\left(1-\frac{y-y_{c}}{h_y}\right)
+\frac{x-x_{c}}{h_x}
-\frac{y-y_{c}}{h_y}\right],\\
&q_1^-(x,y)  := -\frac{h_y}{4}\left[-\left(\frac{y-y_{c}}{h_y}\right)^2-\frac{y-y_{c}}{h_y}+1\right], \\
&q_2^-(x,y)  = -\frac{h_x}{4}\left[-\left(\frac{x-x_{c}}{h_x}\right)^2-\frac{x-x_{c}}{h_x}+1\right],\\
&q_3^-(x,y)  := \frac{h_y}{4}\left[\left(\frac{y-y_{c}}{h_y}\right)^2-\frac{y-y_{c}}{h_y}-1\right],
\quad q_4^-(x,y)  := \frac{h_x}{4}\left[\left(\frac{x-x_{c}}{h_x}\right)^2-\frac{x-x_{c}}{h_x}-1\right].
\end{aligned}
\right.
\end{equation*}
for $(x,y)\in K$.
This set of functions is obtained by removing
all the cubic polynomials from the basis functions of the standard rectangular Morley basis of Section~\ref{sec:Morley}.
For each mesh rectangle~$K$, let $P^-(K)$ denote the space spanned by
$\{p_i^-, q_i^-: i=1,2,3,4\}$.

\begin{lemma}\label{lem4.9}
For each $K\in{\mathcal T}_N$ and any $v\in H^2(K)$,
there exist constants $C$ (which are independent of $K$ and $v$) such that
\begin{align}\label{inter2-1}
\left\|\frac{\partial(v-\Pi_{K}^- v)}{\partial x} \right\|_{L^2(K)}\le C\left(h_x\left\|\frac{\partial^2 v}{\partial x^2} \right\|_{L^2(K)}+h_y\left\|\frac{\partial^2 v}{\partial x\partial y} \right\|_{L^2(K)}\right)
\end{align}
and
\begin{align}\label{inter2-2}
\left\|\frac{\partial(v-\Pi_{K}^- v)}{\partial y} \right\|_{L^2(K)}\le C\left(h_x\left\|\frac{\partial^2 v}{\partial x\partial y} \right\|_{L^2(K)}+h_y\left\|\frac{\partial^2 v}{\partial y^2} \right\|_{L^2(K)}\right).
\end{align}
For each $K\in{\mathcal T}_N$ and any $v\in H^3(K)$,
there exist constants $C$ (which are independent of $K$ and $v$) such that
\begin{align}\label{inter1to3-1}
\left\|\frac{\partial(v-\Pi_{K}^- v)}{\partial x} \right\|_{L^2(K)}
\le C\left(
h_x^2\left\|\frac{\partial^3 v}{\partial x^3} \right\|_{L^2(K)}
+h_x h_y\left\|\frac{\partial^3 v}{\partial x^2\partial y}\right\|_{L^2(K)}
+h_y^2\left\|\frac{\partial^3 v}{\partial x\partial y^2}\right\|_{L^2(K)}
\right)
\end{align}
and
\begin{align}\label{inter1to3-2}
\left\|\frac{\partial(v-\Pi_{K}^- v)}{\partial y} \right\|_{L^2(K)}
\le C\left(
h_x^2\left\|\frac{\partial^3 v}{\partial x^2\partial y} \right\|_{L^2(K)}
+h_x h_y\left\|\frac{\partial^3 v}{\partial x\partial y^2}\right\|_{L^2(K)}
+h_y^2\left\|\frac{\partial^3 v}{\partial y^3}\right\|_{L^2(K)}
\right).
\end{align}
\end{lemma}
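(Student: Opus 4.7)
The plan is to transport everything to the reference element $\hat K=[-1,1]^2$ via the affine map $x=x_c+h_x\hat x$, $y=y_c+h_y\hat y$. Under this change of variables the reduced Morley interpolant transforms naturally ($\widehat{\Pi_K^- v}=\hat\Pi\hat v$), and standard Jacobian scaling reduces \eqref{inter2-1} to the reference-element inequality
\[
\|\partial(\hat v-\hat\Pi\hat v)/\partial\hat x\|_{L^2(\hat K)}\le C\bigl(\|\partial^2\hat v/\partial\hat x^2\|_{L^2(\hat K)}+\|\partial^2\hat v/\partial\hat x\partial\hat y\|_{L^2(\hat K)}\bigr)
\]
for $\hat v\in H^2(\hat K)$, while \eqref{inter1to3-1} reduces to an analogous bound with third derivatives of $\hat v$ on the right. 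The bounds \eqref{inter2-2} and \eqref{inter1to3-2} then follow from the same argument after swapping the roles of $\hat x$ and $\hat y$.

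The crucial algebraic observation is that $\partial\hat\Pi\hat v/\partial\hat x$ depends on $\hat v$ only through $\hat w:=\partial\hat v/\partial\hat x$. Differentiating the defining formula of $\hat\Pi$ and inspecting the basis of Section~\ref{sec:reducedMorley}, one finds $\partial q_1^-/\partial\hat x\equiv\partial q_3^-/\partial\hat x\equiv 0$, so the edge averages on the horizontal edges $e_1,e_3$ drop out entirely, while the identities $\partial p_1^-/\partial\hat x+\partial p_4^-/\partial\hat x=0$ and $\partial p_2^-/\partial\hat x+\partial p_3^-/\partial\hat x=0$ imply that the vertex contributions combine into the differences $\hat v(a_4)-\hat v(a_1)=\int_{-1}^{1}\hat w(\hat s,-1)\,d\hat s$ and $\hat v(a_3)-\hat v(a_2)=\int_{-1}^{1}\hat w(\hat s,1)\,d\hat s$. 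The remaining edge averages on $e_2,e_4$ are already $\pm\frac12\int_{-1}^{1}\hat w(\mp 1,\hat y)\,d\hat y$. Hence there is a linear operator $L$ with $\partial\hat\Pi\hat v/\partial\hat x=L(\hat w)$, and the trace theorem shows that $L:H^1(\hat K)\to L^2(\hat K)$ is bounded.

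The residual operator $\tilde T\hat w:=\hat w-L(\hat w)$ then vanishes on suitable polynomial spaces: since $\hat\Pi$ fixes every element of $P^-(\hat K)=P_2(\hat K)$ and the map $\partial/\partial\hat x:P_2(\hat K)\to P_1(\hat K)$ is surjective, any $\hat w\in P_1(\hat K)$ equals $\partial p/\partial\hat x$ for some $p\in P^-(\hat K)$, so $L(\hat w)=\hat w$ and $\tilde T\hat w=0$. The classical Bramble-Hilbert lemma then yields $\|\tilde T\hat w\|_{L^2(\hat K)}\le C|\hat w|_{H^1(\hat K)}$ (since $\tilde T$ vanishes on the constants) and $\|\tilde T\hat w\|_{L^2(\hat K)}\le C|\hat w|_{H^2(\hat K)}$ (since $\tilde T$ vanishes on $P_1(\hat K)$). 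Substituting $\hat w=\partial\hat v/\partial\hat x$ and rescaling back to~$K$ delivers \eqref{inter2-1} and \eqref{inter1to3-1}.

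The main technical obstacle I expect is the algebraic factorisation $\partial\hat\Pi\hat v/\partial\hat x=L(\partial\hat v/\partial\hat x)$: one must carefully track how each of the eight degrees of freedom enters $\partial\hat\Pi\hat v/\partial\hat x$ and verify the cancellations described above, which is what makes the bounds anisotropic and free of any negative power of $h_x/h_y$. Once that is pinned down, the Bramble-Hilbert step and the anisotropic rescaling are essentially routine.
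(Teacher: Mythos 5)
Your proposal is correct and uses essentially the same approach as the paper: both hinge on the observation that on the reference element $\partial\hat\Pi\hat v/\partial\hat x$ factors through $\hat w=\partial\hat v/\partial\hat x$, followed by a Bramble--Hilbert-type argument and anisotropic scaling back to $K$. The only difference is cosmetic: the paper delegates the Bramble--Hilbert step to \cite[Theorem~2.3]{ChenZhaoShiANM04} and verifies its hypotheses (2.14)--(2.15), while you carry out the polynomial-invariance and continuity arguments directly.
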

\begin{proof}
We combine \cite[Theorem~2.3]{ChenZhaoShiANM04} with a standard scaling argument
to prove \eqref{inter2-1} and \eqref{inter2-2},
imitating the analysis of the Wilson element in~\cite[Section~3]{ChenZhaoShiANM04}.
Let $\hat{K} := [-1,1]^2$ be the standard reference element.
Define the nodes $\hat{a}_i$ and edges $\hat{e}_j$ of $\hat K$ analogously to Figure~\ref{fig:DOF}.
Given any $\hat v\in C^1(\hat{K})$, its reduced rectangular Morley interpolant is defined
 by
\[
\hat{\Pi}_{\hat{K}}^- \hat{v}(\hat x, \hat y) :=
\sum_{i = 1}^{4} \hat{v}(\hat{a}_i)  \hat{p}_i^-(\hat x, \hat y) +
\sum_{j = 1}^{4} \left( \frac{1}{|\hat{e}_j|}\int_{\hat{e}_j}\frac{\partial \hat{v}}{\partial {n}_{\hat{e}_j}}\,ds\right)
	\hat{q}_j^-(\hat x, \hat y) \ \text{ for } (\hat x, \hat y)\in \hat K,
\]
where we set
\begin{equation*}
\left\{
\begin{aligned}
&\hat{p}_1^-(\hat x, \hat y) = \frac{1}{8}\left[2\left(1-\hat{x}\right)\left(1-\hat{y}\right)
-\hat{x}-\hat{y}\right],
\quad \hat{p}_2^-(\hat x, \hat y) = \frac{1}{8}\left[2\left(1-\hat{x}\right)\left(1+\hat{y}\right)
-\hat{x}+\hat{y}\right],\\
&\hat{p}_3^-(\hat x, \hat y) = \frac{1}{8}\left[2\left(1+\hat{x}\right)\left(1+\hat{y}\right)
+\hat{x}+\hat{y}\right],
\quad \hat{p}_4^-(\hat x, \hat y) = \frac{1}{8}\left[2\left(1+\hat{x}\right)\left(1-\hat{y}\right)
+\hat{x}-\hat{y}\right],\\
&\hat{q}_1^-(\hat x, \hat y) = -\frac{1}{4}\left(-\hat{y}^2-\hat{y}+1\right),
\quad \hat{q}_2^-(\hat x, \hat y) = -\frac{1}{4}\left(-\hat{x}^2-\hat{x}+1\right),\\
&\hat{q}_3^-(\hat x, \hat y) = \frac{1}{4}\left(\hat{y}^2-\hat{y}-1\right),
\quad \hat{q}_4^-(\hat x, \hat y) = \frac{1}{4}\left(\hat{x}^2-\hat{x}-1\right).
\end{aligned}
\right.
\end{equation*}

We check  that $\Pi_{\hat K}^-$ satisfies the hypotheses of
\cite[Theorem~2.3]{ChenZhaoShiANM04} for $|\alpha| = 1$, $p=q=2$, $l=m=0$ and $r = 3$.
Let $D^\alpha$ denote $\partial/\partial x$ or $\partial/\partial y$.
It is easy to see that $P_0(\hat K)\subset D^\alpha P^-(\hat K) = P_1(\hat K)$
and $\Pi_{\hat K}^-$ is a continuous linear mapping from $H^2(\hat K)$ to $H^1(\hat K)$.
We must verify  \cite[eqs.(2.14) and (2.15)]{ChenZhaoShiANM04}.
We shall do this for the case $\alpha = (1,0)$ (that is,  $D^\alpha = \partial/\partial x$);
the other case  $\alpha = (0,1)$ is similar.
A computation yields
\begin{align*}
\frac{\partial \hat{\Pi}_{\hat{K}}^- \hat{v}}{\partial \hat{x}} (\hat x, \hat y)
&= \frac{1}{8}\left(2\hat{y}-3\right)\left[\hat{v}(\hat{a}_1)-\hat{v}(\hat{a}_4)\right]
+\frac{1}{8}\left(2\hat{y}+3\right)\left[\hat{v}(\hat{a}_3)-\hat{v}(\hat{a}_2)\right]\\
&\qquad +\frac{1}{8}\left(2\hat{x}+1\right)\int_{-1}^1-\frac{\partial \hat{v}}{\partial \hat{x}}(-1,\hat{y})d\hat{y}
+\frac{1}{8}\left(2\hat{x}-1\right)\int_{-1}^1\frac{\partial \hat{v}}{\partial \hat{x}}(1,\hat{y})d\hat{y}\\
&\hspace{-15mm}=\frac{\hat{y}}{4}\left[\int_{-1}^1\frac{\partial \hat{v}}{\partial \hat{x}}(\hat{x},1)d\hat{x}-\int_{-1}^1\frac{\partial \hat{v}}{\partial \hat{x}}(\hat{x},-1)d\hat{x}\right]
+\frac{\hat{x}}{4}\left[\int_{-1}^1\frac{\partial \hat{v}}{\partial \hat{x}}(1,\hat{y})d\hat{y}-\int_{-1}^1\frac{\partial \hat{v}}{\partial \hat{x}}(-1,\hat{y})d\hat{y}\right]\\
&\hspace{-10mm} +\frac{1}{8}\left[3\int_{-1}^1\frac{\partial \hat{v}}{\partial \hat{x}}(\hat{x},1)d\hat{x}+3\int_{-1}^1\frac{\partial \hat{v}}{\partial \hat{x}}(\hat{x},-1)d\hat{x}-\int_{-1}^1\frac{\partial \hat{v}}{\partial \hat{x}}(1,\hat{y})d\hat{y}-\int_{-1}^1\frac{\partial \hat{v}}{\partial \hat{x}}(-1,\hat{y})d\hat{y}\right]\\
&\hspace{-15mm}=\frac{\hat{y}}{4}\int_{\hat{K}}\frac{\partial^2 \hat{v}}{\partial \hat{y}\partial \hat{x}}d\hat{x}d\hat{y}
+\frac{\hat{x}}{4}\int_{\hat{K}}\frac{\partial^2 \hat{v}}{\partial \hat{x}^2}d\hat{x}d\hat{y}
+\left[\frac{3}{8}\int_{\hat{K}}\frac{\partial}{\partial \hat{y}}\left(\hat{y}\frac{\partial \hat{v}}{\partial \hat{x}}\right)\,d\hat{x}\,d\hat{y}
-\frac{1}{8}\int_{\hat{K}}\frac{\partial}{\partial \hat{x}}\left(\hat{x}\frac{\partial \hat{v}}{\partial \hat{x}}\right)
\,d\hat{x}\,d\hat{y}\right]\\
&:=\sum_{i = 1}^3\beta_i(\hat{v}) \hat{q}_i(\hat x, \hat y),
\end{align*}
where $\beta_i(\hat{v}) := F_i(\frac{\partial \hat{v}}{\partial \hat{x}})$ for $i=1,2,3$ with
$F_1(\hat{w}) := \frac{1}{4} \int_{\hat{K}} \frac{\partial \hat{w}}{\partial \hat{y}}\,d\hat{x}\,d\hat{y},
\ F_2(\hat{w}) := \frac{1}{4}\int_{\hat{K}} \frac{\partial \hat{w}}{\partial \hat{x}} \,d\hat{x}\,d\hat{y}$,
$$
F_3(\hat{w}) := \frac{3}{8}\int_{\hat{K}}\frac{\partial\left(\hat{y}\hat{w}\right)}{\partial \hat{y}}\,d\hat{x}\,d\hat{y}
-\frac{1}{8}\int_{\hat{K}}\frac{\partial\left(\hat{x}\hat{w}\right)}{\partial \hat{x}}\,d\hat{x}\,d\hat{y},
$$
and $\hat{q}_1(\hat x, \hat y) := \hat{y}, \, \hat{q}_2(\hat x, \hat y) := \hat{x}, \,
\hat{q}_3(\hat x, \hat y) := 1$.
Clearly  $\{\hat{q}_1, \hat{q}_2, \hat{q}_3\}$ is a basis for
$\hat{D}^{\alpha} P^-(\hat{K}) = P_1(\hat{K})$, so
we have now verified  \cite[eq.(2.14)]{ChenZhaoShiANM04}.
To verify \cite[eq.(2.15)]{ChenZhaoShiANM04} is straightforward: Cauchy-Schwarz inequalities
yield $\left|F_i(\hat{w})\right|\le \hat{C}\|\hat{w}\|_{H^1(\hat{K})}$ for $i = 1, 2, 3$.

One can now invoke \cite[Theorem~2.3]{ChenZhaoShiANM04} for
$|\alpha| = 1$, $p=q=2$, $l=m=0$ and $r = 3$.
Then a standard scaling argument gives \eqref{inter2-1} and \eqref{inter2-2}.

The interpolation error estimates \eqref{inter1to3-1} and \eqref{inter1to3-2} can be proved similarly,
except that now $l=1$ (we had $l=0$ for \eqref{inter2-1} and \eqref{inter2-2}),
while the other parameters are unchanged: $|\alpha| = 1$, $p=q=2$, $m=0$ and $r = 3$.
It is easy to see that $P_1(\hat K)\subset D^\alpha P^-(\hat K) = P_1(\hat K)$ and $\Pi_{\hat K}^-$
is a continuous linear mapping from $H^3(\hat K)$ to $H^1(\hat K)$.
The verification of \cite[eq.(2.14)]{ChenZhaoShiANM04} is same as before
and the verification of \cite[eq.(2.15)]{ChenZhaoShiANM04} is similar
since $\left|F_i(\hat{w})\right|\le \hat{C}\|\hat{w}\|_{H^1(\hat{K})}\le \hat{C}\|\hat{w}\|_{H^2(\hat{K})}$ for $i = 1, 2, 3$.
Then we invoke \cite[Theorem~2.3]{ChenZhaoShiANM04}, followed by a standard scaling argument,
thereby obtaining \eqref{inter1to3-1} and \eqref{inter1to3-2}.
\end{proof}

The next result is the analogue of Lemma~\ref{lem4.9} for 2nd-order derivatives.

\begin{lemma}\label{lem4.10}
For each $K\in{\mathcal T}_N$ and any $v\in H^3(K)$, there exist constants $C$ (independent of $K$ and $v$) such that
\begin{align}\label{inter3-1}
\left\|\frac{\partial^2(v-\Pi_{K}^- v)}{\partial x^2} \right\|_{L^2(K)}\le C\left(h_x\left\|\frac{\partial^3 v}{\partial x^3} \right\|_{L^2(K)}+h_y\left\|\frac{\partial^3 v}{\partial x^2\partial y} \right\|_{L^2(K)}\right),
\end{align}
\begin{align}\label{inter3-2}
\left\|\frac{\partial^2(v-\Pi_{K}^- v)}{\partial x\partial y} \right\|_{L^2(K)}\le C\left(h_x\left\|\frac{\partial^3 v}{\partial x^2\partial y} \right\|_{L^2(K)}+h_y\left\|\frac{\partial^3 v}{\partial x\partial y^2} \right\|_{L^2(K)}\right)
\end{align}
and
\begin{align}\label{inter3-3}
\left\|\frac{\partial^2(v-\Pi_{K}^- v)}{\partial y^2} \right\|_{L^2(K)}\le C\left(h_x\left\|\frac{\partial^3 v}{\partial x\partial y^2} \right\|_{L^2(K)}+h_y\left\|\frac{\partial^3 v}{\partial y^3} \right\|_{L^2(K)}\right).
\end{align}
\end{lemma}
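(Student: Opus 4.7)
The plan is to imitate exactly the structure of the proof of Lemma~\ref{lem4.9}: pass to the reference element $\hat K = [-1,1]^2$, apply \cite[Theorem~2.3]{ChenZhaoShiANM04} with suitable parameters, then use a standard anisotropic scaling argument (scaling $x$ and $y$ separately) to recover the bounds on a general mesh rectangle~$K$.

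First I would fix the parameters in the Chen-Zhao-Shi framework. Since the target estimates involve the second-order derivatives $\partial^{\alpha} = \partial^2/\partial x^2$, $\partial^2/\partial x\partial y$, $\partial^2/\partial y^2$, we take $|\alpha|=2$, $p=q=2$, $m=0$, $l=0$ and $r=3$. Because $P^-(\hat K) = \mathrm{span}\{\hat p_i^-,\hat q_j^-: 1\le i,j\le 4\}$ consists of polynomials of total degree at most~$2$ in each variable separately (in fact, each $\hat p_i^-$ is bilinear plus a linear correction and each $\hat q_j^-$ is at most quadratic in one variable), one checks that $D^\alpha P^-(\hat K) \supseteq P_0(\hat K)$ for each of the three second-order operators $\partial^2/\partial x^2$, $\partial^2/\partial x \partial y$, $\partial^2/\partial y^2$; moreover $\hat\Pi_{\hat K}^-: H^3(\hat K)\to H^2(\hat K)$ is a continuous linear map.

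Next I would verify the two hypotheses \cite[eqs.(2.14) and (2.15)]{ChenZhaoShiANM04} for each choice of~$\alpha$. Take $\alpha = (2,0)$ as a representative case. A direct computation using the explicit formulas for $\hat p_i^-$ and $\hat q_j^-$ shows that
\begin{align*}
\frac{\partial^2 \hat\Pi_{\hat K}^- \hat v}{\partial \hat x^2}(\hat x,\hat y)
 = \beta(\hat v)\cdot \hat q(\hat x,\hat y),
\end{align*}
where $\hat q \equiv 1$ is the (single) basis element of $D^\alpha P^-(\hat K) = P_0(\hat K)$ and $\beta(\hat v)$ is a linear combination of the edge integrals $\int_{\hat e_j}\partial\hat v/\partial n_{\hat e_j}\,ds$ that can be rewritten, via the divergence theorem exactly as in the proof of Lemma~\ref{lem4.9}, as an integral over $\hat K$ of first partial derivatives of $\partial\hat v/\partial\hat x$. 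This gives $\beta(\hat v) = F(\partial\hat v/\partial\hat x)$ with $F$ of the form $\hat w\mapsto \int_{\hat K}(a_1\,\partial\hat w/\partial\hat x + a_2\,\partial\hat w/\partial\hat y)\,d\hat x\,d\hat y$, so Cauchy-Schwarz yields $|F(\hat w)|\le \hat C\|\hat w\|_{H^1(\hat K)}$ and hence $|\beta(\hat v)|\le \hat C\|\hat v\|_{H^3(\hat K)}$, verifying \cite[eq.(2.15)]{ChenZhaoShiANM04}. The cases $\alpha=(1,1)$ and $\alpha=(0,2)$ are handled identically, after swapping the roles of $\hat x$ and $\hat y$ in the case $(0,2)$.

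Finally, invoking \cite[Theorem~2.3]{ChenZhaoShiANM04} gives, on $\hat K$,
\begin{align*}
\left\|D^\alpha(\hat v - \hat\Pi_{\hat K}^- \hat v)\right\|_{L^2(\hat K)}
 \le \hat C\, |\hat v|_{H^3(\hat K)}
\end{align*}
for each $|\alpha|=2$. A standard anisotropic scaling argument, applied separately in $x$ and $y$ (just as in \cite[Section~3]{ChenZhaoShiANM04} and as used for \eqref{inter2-1}--\eqref{inter2-2} above), then converts each reference-element semi-norm $|\hat v|_{H^3(\hat K)}$ into the appropriate anisotropic sum $h_x\|\partial^3 v/\partial x^3\|_{L^2(K)} + h_y\|\partial^3 v/\partial x^2\partial y\|_{L^2(K)}$, etc., producing exactly \eqref{inter3-1}, \eqref{inter3-2} and \eqref{inter3-3}. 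The main obstacle is the explicit symbolic computation that expresses each second derivative of $\hat\Pi_{\hat K}^- \hat v$ as a linear functional of $\hat v$ satisfying the Chen-Zhao-Shi hypothesis; once that reduction is done, the scaling is mechanical and produces the three announced anisotropic bounds.
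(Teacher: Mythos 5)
Your plan is correct, but it follows a different route from the paper. The paper's actual proof of Lemma~\ref{lem4.10} is a one-liner: it appeals directly to the first inequality of \cite[eq.(12)]{ShiXieNMPDE10} on the reference element and then scales. You instead re-derive the reference-element estimate from scratch via \cite[Theorem~2.3]{ChenZhaoShiANM04}, exactly as the paper did for Lemma~\ref{lem4.9}. Both are valid; your version is more self-contained and structurally consistent with the treatment of the first-derivative case, whereas the paper's is shorter because Shi--Xie already proved precisely this estimate for the reduced rectangular Morley element.

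Your verification of the Chen--Zhao--Shi hypotheses is essentially right. For $\alpha=(2,0)$, only $\hat q_2^-$ and $\hat q_4^-$ contribute nonzero second $\hat x$-derivatives (each a constant $\tfrac12$), so $D^\alpha P^-(\hat K) = P_0(\hat K)$ and
\[
\frac{\partial^2 \hat\Pi^-_{\hat K}\hat v}{\partial\hat x^2}
= \tfrac14\int_{\hat K}\frac{\partial^2\hat v}{\partial\hat x^2}\,d\hat x\,d\hat y,
\]
a bounded functional of $\partial\hat v/\partial\hat x$ in $H^1(\hat K)$. One small inaccuracy: for the mixed case $\alpha=(1,1)$, the coefficient $\beta(\hat v)$ comes from the four nodal values (the $\hat p_i^-$ terms), not from edge integrals of $\partial\hat v/\partial n$, since the $\hat q_j^-$ all have vanishing mixed second derivative. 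Concretely,
\[
\frac{\partial^2 \hat\Pi^-_{\hat K}\hat v}{\partial\hat x\,\partial\hat y}
= \tfrac14\bigl[\hat v(\hat a_1)-\hat v(\hat a_2)+\hat v(\hat a_3)-\hat v(\hat a_4)\bigr]
= \tfrac14\int_{\hat K}\frac{\partial^2\hat v}{\partial\hat x\,\partial\hat y}\,d\hat x\,d\hat y,
\]
so the rewrite into an integral over $\hat K$ still goes through (via the fundamental theorem of calculus rather than the divergence theorem), and the required bound $|\beta(\hat v)|\le \hat C\|\partial\hat v/\partial\hat x\|_{H^1(\hat K)}$ holds. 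With that correction the argument is complete; the anisotropic scaling then produces \eqref{inter3-1}--\eqref{inter3-3} as you describe.
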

\begin{proof}
Apply the first inequality of  \cite[eq.(12)]{ShiXieNMPDE10} on the reference element $\hat K= [-1,1]^2$,
then use a standard scaling argument.
\end{proof}

\subsection{Error bound on the Shishkin mesh}\label{sec:errorShishkin}
We can now complete the error analysis of our numerical method.

\begin{theorem}\label{thm:cgce}
Let $u$ be the solution of \eqref{BVP}  and $u_N$ the solution of \eqref{BVP_FEM}.
Let Assumption \ref{ass:S} be satisfied.
Then there exists a constant $C$ (which is independent of $\eps$ and $N$) such that
\begin{equation*}
\left\|u-u_N\right\|_{\eps,N}\le C \left(\eps^{1/2}N^{-1}+\eps N^{-1}\ln N + N^{-3/2} \right).
\end{equation*}
\end{theorem}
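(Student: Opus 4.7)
}
The plan is to start from the error equation~\eqref{errorequation}, with the free parameter $v_N \in V_N$ chosen to be the reduced Morley interpolant $\Pi_N^- u$ (element-wise, using the operator of Section~\ref{sec:reducedMorley}), set $\xi_N = u_N - v_N$ and $\eta_N = u - v_N$, and exploit the coercivity inequality
\begin{equation*}
\|\xi_N\|_{\eps,N}^2 \;\le\; C\bigl[\eps^2 a_N(\xi_N,\xi_N) + b_N(\xi_N,\xi_N)\bigr],
\end{equation*}
which follows from $c \ge c_0 > 0$ and the definitions of $a_N,b_N$. Once the left-hand side of~\eqref{errorequation} is bounded below by $C^{-1}\|\xi_N\|_{\eps,N}^2$, each of the six terms on the right-hand side is of the form (bound)$\,\times\,\|\xi_N\|_{\eps,N}$, after which dividing by $\|\xi_N\|_{\eps,N}$ and applying the triangle inequality
\begin{equation*}
\|u-u_N\|_{\eps,N} \;\le\; \|u-v_N\|_{\eps,N} + \|\xi_N\|_{\eps,N}
\end{equation*}
will deliver the claimed rate.

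Four of the six right-hand-side terms are already handled by earlier lemmas: the two jump integrals by Lemmas~\ref{consis1-3} and~\ref{consis2-3}, the gradient-of-Laplacian term by Lemma~\ref{trun-1}, and the load/advective duo $(f,\xi_N-I_N\xi_N) - b_N(u,\xi_N-I_N\xi_N)$ by Lemmas~\ref{trun-4E}, \ref{trun-4S}, and~\ref{lem:ftrunc} (after splitting $u = S + u^E$). Each of these contributes $O(\eps^{1/2}N^{-1} + \eps N^{-1}\ln N + N^{-3/2})$, which is exactly the target rate. The remaining two terms, $\eps^2 a_N(\eta_N,\xi_N)$ and $b_N(\eta_N,\xi_N)$, are controlled by Lemmas~\ref{trun-2} and~\ref{trun-3} once we have bounds on
\begin{equation*}
\eps\Bigl(\sum_{K}\|\Delta_N(u-\Pi_N^- u)\|_{L^2(K)}^2\Bigr)^{1/2} \ \ \text{and}\ \ \Bigl(\sum_{K}\|\nabla_N(u-\Pi_N^- u)\|_{L^2(K)}^2\Bigr)^{1/2}.
\end{equation*}
The same bounds will also dominate $\|u - v_N\|_{\eps,N}$ in the triangle inequality step.

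To bound these interpolation errors on the Shishkin mesh, I would decompose $u$ as in Assumption~\ref{ass:S} and estimate each of $S$, each boundary layer $E_i$, and each corner layer $E_{ij}$ separately on the nine Shishkin subdomains $\Omega_1,\dots,\Omega_9$. For the smooth part $S$, Lemmas~\ref{lem4.9}--\ref{lem4.10} combined with \eqref{priS} and $h_x,h_y \le 2N^{-1}$ give $O(\eps N^{-1})$ and $O(N^{-1})$ respectively. For a boundary layer component such as $E_1$, on the coarse subregion $\widehat{\Omega}_2$ the exponential decay $e^{-y/\eps}\le N^{-1}$ in the integrand compensates for the large mesh width $h_y = O(N^{-1})$ and yields a power of $N^{-1}$, while on the fine subregion $\widehat{\Omega}_1\cup\widehat{\Omega}_3$ the bound $h_y = O(\eps N^{-1}\ln N)$ counteracts the $\eps^{-1}$ and $\eps^{-2}$ growth of $\partial^{i+j}E_1/\partial x^i \partial y^j$ visible in~\eqref{priE1}. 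A direct computation modelled on the proof of Lemma~\ref{consis1-3} (in particular the splits used for \eqref{consisE1} and \eqref{consisE12}) will then deliver the bound $O(\eps^{1/2}N^{-1} + \eps N^{-1}\ln N)$ for the $\eps\|\Delta_N\eta_N\|$ sum, and an even smaller quantity for the gradient sum. The corner layers are treated analogously and contribute terms bounded by $\eps N^{-1}\ln N + N^{-2}$.

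The main obstacle is the $\eps\|\Delta_N(u - \Pi_N^- u)\|$ estimate on the fine Shishkin strips, where one must verify that the worst-case products $\eps \cdot h_y \cdot \|\partial^3 E_1/\partial y^3\|_{L^2(K)}$ with $h_y = O(\eps N^{-1}\ln N)$ and $|\partial^3 E_1/\partial y^3| \le C\eps^{-2}e^{-y/\eps}$, summed against an $L^2$-integral of $e^{-2y/\eps}$ that contributes only a factor $\eps$, collapse to $O(\eps^{1/2}N^{-1}\ln N)$. This is exactly the regime where a naive estimate threatens a negative power of $\eps$, and only the combination of the anisotropic interpolation estimates of Lemma~\ref{lem4.10} (which pair derivatives in one direction with mesh widths in the \emph{same} direction, unlike the Adini situation described in Remark~\ref{compare}) with the exponential boundary-layer decay prevents this. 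Once all nine subdomain contributions are assembled and summed with the four lemma-based bounds, the final rate $\|u-u_N\|_{\eps,N} \le C(\eps^{1/2}N^{-1} + \eps N^{-1}\ln N + N^{-3/2})$ follows.
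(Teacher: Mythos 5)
Your proposal follows the same route as the paper's proof: the same choice $v_N = \Pi_K^- u$, the same error equation and coercivity step, the same lemma-by-lemma treatment of the right-hand-side terms (including the correct split of $-b_N(u,\xi_N-I_N\xi_N)+(f,\xi_N-I_N\xi_N)$ into Lemmas~\ref{trun-4E}, \ref{trun-4S} and~\ref{lem:ftrunc}), the same reduction of $\eps^2 a_N(\eta_N,\xi_N)$ and $b_N(\eta_N,\xi_N)$ to interpolation-error sums via Lemmas~\ref{trun-2} and~\ref{trun-3}, the same triangle-inequality step, and the same decomposition $u=S+u^E$ over the Shishkin subdomains using the anisotropic estimates of Lemmas~\ref{lem4.9} and~\ref{lem4.10}.

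One claim needs correction, and it matters. You state that for the smooth part $S$ the gradient interpolation error is $O(N^{-1})$. That is what the $H^2$-based bounds \eqref{inter2-1}--\eqref{inter2-2} of Lemma~\ref{lem4.9} give, but $O(N^{-1})$ is \emph{larger} than every term in the target bound $C\bigl(\eps^{1/2}N^{-1}+\eps N^{-1}\ln N+N^{-3/2}\bigr)$ once $\eps$ is small, so the theorem would not follow. For $S$ you must instead invoke the $H^3$-based bounds \eqref{inter1to3-1}--\eqref{inter1to3-2} of the same lemma, which pair each third derivative with a product of two mesh widths and give
\begin{equation*}
\Bigl(\sum_{K\in\mathcal{T}_N}\|\nabla_N(S-\Pi_{K}^- S)\|_{L^2(K)}^2\Bigr)^{1/2}\le CN^{-2},
\end{equation*}
exactly as in \eqref{similar4}. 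The $H^2$-based estimates \eqref{inter2-1}--\eqref{inter2-2} are the right tool only for the layer part $u^E$, where the derivative growth $\eps^{1-i-j}$ makes the extra power of the mesh width counterproductive. With this single fix your argument coincides with the paper's proof.
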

\begin{proof}
We imitate the argument of  \cite[Theorem 1]{ShiXieNMPDE10}.
Define $v_N\in V_N$ by $v_N = \Pi_{K}^- u$ on each $K\in\mathcal{T}_N$.
Set $\xi_N = u_N-v_N$. and $\eta_N = u-v_N$.
Recalling the error equation \eqref{errorequation}, we estimate the terms on its right-hand side using
Lemmas \ref{consis1-3}, \ref{consis2-3} and \ref{trun-1}, \ref{trun-4E}, \ref{trun-4S}
and the definitions of~$a_N$ and~$b_N$: this gives
\begin{align*}
\|\xi_N\|_{\eps,N}^2&\le C \left[\eps^2 a_N(\xi_N,\xi_N) + b_N(\xi_N, \xi_N)\right]  \notag\\
&\le C \left[\eps^{1/2}N^{-1}+\eps N^{-1}\ln N + N^{-3/2}
	+\eps\left(\sum_{K\in\mathcal{T}_N}\|\Delta_N\eta_N\|_{L^2(K)}^2\right)^{1/2} \right.  \\
&\hspace{20mm} \left. +\left(\sum_{K\in\mathcal{T}_N}\|\nabla_N\eta_N\|_{L^2(K)}^2\right)^{1/2}\right] 	\|\xi_N\|_{\eps,N},
\end{align*}
for some constants $C$.
But $u-u_N = u-v_N + v_N - u_N = \eta_N - \xi_N$, so
\begin{align}
&\left\|u-u_N\right\|_{\eps,N}\le \left\|\eta_N\right\|_{\eps,N}
+C\left(\eps^{1/2}N^{-1}+\eps N^{-1}\ln N + N^{-3/2} \right)\notag\\
&\qquad +C\left[\eps\left(\sum_{K\in\mathcal{T}_N}\|\Delta_N(u-\Pi_{K}^- u)\|_{L^2(K)}^2\right)^{1/2}
+\left(\sum_{K\in\mathcal{T}_N}\|\nabla_N(u-\Pi_{K}^- u)\|_{L^2(K)}^2\right)^{1/2}\right] \notag\\
&\quad \le C\left(\eps^{1/2}N^{-1}+\eps N^{-1}\ln N + N^{-3/2} \right)\notag\\
&\qquad +C\left[\eps\left(\sum_{K\in\mathcal{T}_N}\|\Delta_N(u-\Pi_{K}^- u)\|_{L^2(K)}^2\right)^{1/2}
+\left(\sum_{K\in\mathcal{T}_N}\|\nabla_N(u-\Pi_{K}^- u)\|_{L^2(K)}^2\right)^{1/2}\right],
\label{halfresult}
\end{align}
from the definitions of $\left\|\cdot\right\|_{\eps,N}$ and $\eta_N$.

The bounds of Lemma~\ref{lem4.10} yield
\begin{align*}
&\left(\sum_{K\in\mathcal{T}_N}
\|\Delta_N(u-\Pi_{K}^- u)\|_{L^2(K)}^2\right)^{1/2}\\
&\le C \bigg[\sum_{K\in\mathcal{T}_N}\bigg(h_x^2\left\|\frac{\partial^3 u}{\partial x^3} \right\|_{L^2(K)}^2
+h_y^2\left\|\frac{\partial^3 u}{\partial x^2\partial y} \right\|_{L^2(K)}^2
+h_x^2\left\|\frac{\partial^3 u}{\partial x^2\partial y} \right\|_{L^2(K)}^2
+h_y^2\left\|\frac{\partial^3 u}{\partial x\partial y^2} \right\|_{L^2(K)}^2\\
&\quad+h_x^2\left\|\frac{\partial^3 u}{\partial x\partial y^2} \right\|_{L^2(K)}^2
+h_y^2\left\|\frac{\partial^3 u}{\partial y^3} \right\|_{L^2(K)}^2\bigg)\bigg]^{1/2}.
\end{align*}
To estimate each of these terms, one can proceed similarly to the proof of Lemma~\ref{consis1-3} above.
Specifically, the worst-behaved terms are
$h_x^2\left\|\frac{\partial^3 u}{\partial x\partial y^2}\right\|_{L^2(K)}^2$
and $h_y^2\left\|\frac{\partial^3 u}{\partial x^2\partial y}\right\|_{L^2(K)}^2$.
Comparing $h_x^2\left\|\frac{\partial^3 u}{\partial x\partial y^2}\right\|_{L^2(K)}^2$
with the term $h_x^2\left\|\frac{\partial^4 u}{\partial x\partial y^3}\right\|_{L^2(K)}^2$ in Lemma~\ref{consis1-3},
here the order of the derivative is one less, which effectively removes a factor $\eps^{-1}$ from the calculation leading to~\eqref{consisu}, so instead of~\eqref{consisu} one obtains
\begin{equation}\label{similar2}
\eps\left(\sum_{K\in\mathcal{T}_N}
\|\Delta_N(u-\Pi_{K}^- u)\|_{L^2(K)}^2\right)^{1/2}\le C \left(\eps^{1/2}N^{-1}+\eps N^{-1}\ln N+N^{-2}\right).
\end{equation}
Next, for the boundary and corner layer components, the bounds of Lemma~\ref{lem4.9} give
\begin{align}
&\left(\sum_{K\in\mathcal{T}_N}\|\nabla_N(u^E-\Pi_{K}^- u^E)\|_{L^2(K)}^2\right)^{1/2} \notag\\
&\le C \bigg[\sum_{K\in\mathcal{T}_N}
	\bigg(h_x^2\left\|\frac{\partial^2 u^E}{\partial x^2} \right\|_{L^2(K)}^2
	+h_y^2\left\|\frac{\partial^2 u^E}{\partial x\partial y} \right\|_{L^2(K)}^2
	+h_x^2\left\|\frac{\partial^2 u^E}{\partial x\partial y} \right\|_{L^2(K)}^2
	+h_y^2\left\|\frac{\partial^2 u^E}{\partial y^2} \right\|_{L^2(K)}^2\bigg)\bigg]^{1/2}  \notag\\
&\le C \left(\eps^{1/2}N^{-1}+\eps N^{-1}\ln N + N^{-2}\right)\label{similar3}
\end{align}
by \eqref{similar1}.
For the smooth component, by \eqref{inter1to3-1} and \eqref{inter1to3-2} we have
\begin{align}\label{similar4}
\left(\sum_{K\in\mathcal{T}_N}\|\nabla_N(S-\Pi_{K}^- S)\|_{L^2(K)}^2\right)^{1/2}\le CN^{-2}.
\end{align}

Substituting \eqref{similar2}, \eqref{similar3} and \eqref{similar4} into \eqref{halfresult}, we are done.
\end{proof}

It is well known that in the analysis of singularly perturbed problems on layer-adapted meshes
such as the Shishkin mesh, the most troublesome regime is when $\eps\approx N^{-1}$.
For this regime, Theorem~\ref{thm:cgce} gives the following result.

\begin{corollary}\label{cor:cgce}
Let $u$ be the solution of \eqref{BVP}  and $u_N$ the solution of \eqref{BVP_FEM}.
Let Assumption \ref{ass:S} be satisfied.
If $\eps\approx N^{-1}$, then there exists a constant $C$ (which is independent of $\eps$ and $N$) such that
\begin{equation*}
\left\|u-u_N\right\|_{\eps,N}\le C N^{-3/2}.
\end{equation*}
\end{corollary}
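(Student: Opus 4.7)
The plan is to derive Corollary~\ref{cor:cgce} as an immediate specialisation of Theorem~\ref{thm:cgce} to the regime $\eps \approx N^{-1}$. Since Theorem~\ref{thm:cgce} already provides the uniform bound
\[
\left\|u-u_N\right\|_{\eps,N}\le C \left(\eps^{1/2}N^{-1}+\eps N^{-1}\ln N + N^{-3/2} \right),
\]
all that remains is to estimate each of the three terms on the right-hand side under the assumption $\eps \approx N^{-1}$.

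First, I would treat the term $\eps^{1/2} N^{-1}$: substituting $\eps \approx N^{-1}$ gives $\eps^{1/2} N^{-1} \approx N^{-1/2} \cdot N^{-1} = N^{-3/2}$, which is of the desired order. Second, for the term $\eps N^{-1}\ln N$, I would observe that $\eps N^{-1}\ln N \approx N^{-2}\ln N$, and since $N^{-2}\ln N = N^{-1/2}\ln N \cdot N^{-3/2}$ with $N^{-1/2}\ln N \to 0$ as $N \to \infty$, this term is bounded by $CN^{-3/2}$ (absorbing the $\ln N$ factor into the constant via the elementary inequality $N^{-1/2}\ln N \le C$ for all $N\ge 2$). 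Third, the term $N^{-3/2}$ is already in the desired form.

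Combining these three bounds under the hypothesis $\eps \approx N^{-1}$ yields the stated estimate $\left\|u-u_N\right\|_{\eps,N}\le CN^{-3/2}$. There is no substantive obstacle here; the entire effort of proving convergence has already been accomplished in Theorem~\ref{thm:cgce}, and the corollary merely records the simplest consequence of the uniform bound in the parameter regime where the three contributions to the error balance against each other. The only minor point worth noting explicitly in the proof is the absorption of the logarithmic factor in the middle term, which one could also handle by writing $\eps N^{-1}\ln N \le (\eps \ln N) \cdot N^{-1} \le \frac{1}{4} N^{-1} \le \frac{1}{4} N^{-3/2}\cdot N^{1/2}$ and arguing directly, but the cleaner route is just the elementary asymptotic comparison above.
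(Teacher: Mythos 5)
Your main argument is correct and is exactly the implicit reasoning the paper intends: the corollary is a direct substitution of $\eps \approx N^{-1}$ into the bound of Theorem~\ref{thm:cgce}, with the only nontrivial check being that $\eps N^{-1}\ln N \approx N^{-2}\ln N \le C N^{-3/2}$, which you handle correctly via the boundedness of $N^{-1/2}\ln N$. The paper gives no separate proof for the corollary, so your approach matches it.

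One small caveat about the closing aside: the chain $\eps N^{-1}\ln N \le (\eps\ln N)N^{-1} \le \tfrac14 N^{-1} \le \tfrac14 N^{-3/2}\cdot N^{1/2}$ does \emph{not} establish the $N^{-3/2}$ bound, since the last expression equals $\tfrac14 N^{-1}$ and $N^{1/2}\to\infty$; that route only proves an $O(N^{-1})$ bound on the middle term. Since it also discards the hypothesis $\eps\approx N^{-1}$ (using only $\eps\ln N\le\tfrac14$), it cannot recover the sharper rate. The ``cleaner route'' you actually rely on is fine; just drop or repair the alternative.
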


\begin{remark}[Adini versus Morley]\label{AvM}
The Adini  element is a well-known example of  a nonconforming finite element
that is suitable for our singularly perturbed problem~\eqref{BVP}.
For the Adini element on an appropriate Shishkin mesh,
one has the error bound \cite[Corollary~4.1]{MengStynes19}
\[
\|u-u_N\|_{\eps,N} \le
	\begin{cases}
	C\left(\eps^{1/2} + N^{-3} \right) &\text{if } \eps\le N^{-1}, \\
	C\left( \eps^{1/2}(N^{-1}\ln N)^2 +  \eps^{-3/2}N^{-2}  \right)  &\text{if } \eps> N^{-1}.
	\end{cases}
\]
Thus in the most challenging regime when $\eps \approx N^{-1}$,
the Adini element obtains $O(N^{-1/2})$ convergence,
but Corollary~\ref{cor:cgce} shows that the rectangular Morley element attains $O(N^{-3/2})$ convergence.
\end{remark}

\begin{remark}\label{rem:sharper}
An inspection of the entire analysis leading to the result of Theorem~\ref{thm:cgce}
shows that we are close to proving the sharper error bound
\begin{equation}\label{sharper}
\left\|u-u_N\right\|_{\eps,N}\le C \left(\eps^{1/2}N^{-1}+\eps N^{-1}\ln N + N^{-2} \right),
\end{equation}
which would agree with our numerical results in Section~\ref{sec:numer}.
The weaker term $N^{-3/2}$ in Theorem~\ref{thm:cgce} appears only because of the
two inequalities \eqref{caseedge2} and~\eqref{caseedge2_f}---all other steps
of the analysis yield terms in~\eqref{sharper}. Unfortunately we are unable at present to improve
 \eqref{caseedge2} and~\eqref{caseedge2_f}.
\end{remark}

\section{Numerical experiments}\label{sec:numer}
To test the accuracy of our finite element method we present three numerical examples, one with a known solution that displays typical layer behaviour and two others with unknown solutions.

\begin{example}\label{exa:1}
In \eqref{BVP} choose $c(x,y) \equiv 1$ and choose $f(x,y)$ such that  the exact solution
is $u(x,y) = g(x)h(y)$, where
\begin{equation*}
g(x) := \frac{1}{2}\left[\sin(\pi x)+\frac{\pi\eps}{1-e^{-1/\eps}}\left(e^{-x/\eps}+e^{(x-1)/\eps}-1-e^{-1/\eps}\right)\right]
\end{equation*}
and
\begin{equation*}
h(y) := 2y(1-y^2)+\eps\left[ld(1-2y)-3\frac{q}{l}+\left(\frac{3}{l}-d\right)e^{-y/\eps}+\left(\frac{3}{l}+d\right)e^{(y-1)/\eps}\right]
\end{equation*}
with $l = 1-e^{-1/\eps}$, $q = 2-l$ and $d = 1/(q-2\eps l)$.
\end{example}

The derivatives of $u$ in this example match the bounds of Assumption~\ref{ass:S}.
The solution computed by our finite element method~\eqref{BVP_FEM} is $u_N$. Table~\ref{Table:numerresust1} displays the errors $\|u-u_N\|_{\eps,N}$
for various values of $\eps$ and $N$, and the corresponding convergence rates
$$
r = \frac{\ln(\|u-u_N\|_{\eps,N})-\ln(\|u-u_{2N}\|_{\eps,2N})}{\ln (2N) - \ln (N)},
$$
where the convergence is assumed to be $O(N^{-r})$ for each fixed value of~$\eps$,
i.e.,  the convergence rate is measured along each row of the table.


\begin{table}[htbp]
\centering
\begin{tabular}{|c|c|c|c|c|}
\hline
$\eps$    & $N$ = 16    & $N$ = 32       & $N$ = 64        & $N$ = 128         \\\hline
1.0e-00    & 2.08e-03 & 1.02e-03  & 5.07e-04    & 2.52e-04       \\\hline
    &1.02  &1.01   &1.00     &        \\\hline
1.0e-01    & 2.46e-02 & 1.12e-02  & 5.33e-03    & 2.59e-03       \\\hline
    &1.13  &1.07   &1.03     &        \\\hline
1.0e-02    & 3.79e-02 & 1.19e-02  & 4.46e-03    & 1.90e-03      \\\hline
    &1.66  &1.42   &1.23    &        \\\hline
1.0e-03    & 3.52e-02 & 1.01e-02  & 3.23e-03    & 1.14e-03      \\\hline
    &1.79  &1.65   &1.50    &        \\\hline
1.0e-04    & 3.14e-02 & 8.44e-03  & 2.34e-03    & 7.05e-04      \\\hline
    &1.89  &1.84   &1.73     &        \\\hline
1.0e-05    & 2.99e-02 & 7.77e-03  & 2.02e-03    & 5.43e-04      \\\hline
    &1.94  &1.94   &1.89     &        \\\hline
1.0e-06    & 2.94e-02 & 7.54e-03  & 1.91e-03     & 4.92e-04       \\\hline
    &1.96  &1.97   &1.96     &        \\\hline
1.0e-07    & 2.92e-02 & 7.47e-03  & 1.88e-03     & 4.75e-04       \\\hline
    &1.96  &1.98   &1.98     &        \\\hline
1.0e-08    & 2.92e-02 & 7.45e-03  & 1.87e-03     & 4.70e-04       \\\hline
    &1.97  &1.99   &1.99     &        \\\hline
\end{tabular}
\caption{Numerical results for Example~\ref{exa:1}}
\label{Table:numerresust1}
\end{table}

To test the case $\eps \approx N^{-1}$ that was discussed in Remark~\ref{AvM},
we choose $\eps = N^{-1}$ in Example \ref{exa:1}.
Table~\ref{Table:numerresust1-2} displays the errors $\|u-u_N\|_{N^{-1},N}$
for various values of $N$, and the corresponding convergence rates
$$
r = \frac{\ln(\|u-u_N\|_{N^{-1},N})-\ln(\|u-u_{2N}\|_{(2N)^{-1},2N})}{\ln (2N) - \ln (N)},
$$
where the convergence is assumed to be $O(N^{-r})$.
The table shows that our method attains $O(N^{-3/2})$ convergence, in agreement with Corollary~\ref{cor:cgce}.

\begin{table}[htbp]
\centering
\begin{tabular}{|c|c|c|c|c|}
\hline
$N$ = 16    & $N$ = 32    & $N$ = 64       & $N$ = 128        & $N$ = 256         \\\hline
2.66e-02    & 1.18e-02 & 4.76e-03  & 1.79e-03    & 6.50e-04       \\\hline
1.16    &1.31  &1.41   &1.46     &        \\\hline
\end{tabular}
\caption{Numerical results for Example~\ref{exa:1} with $\eps = N^{-1}$}
\label{Table:numerresust1-2}
\end{table}

\begin{example}\label{exa:2}
In \eqref{BVP} we impose the transverse loading $f(x,y)=100(1-x-y+2xy)(x+y-2xy)$,
which is the same as the numerical example of~\cite[equation~(15)]{Franz16},
and choose $c(x,y)= 3 + (1 + x)y^2 + (2-y)e^x$.
The exact solution of this problem is unknown.
\end{example}

\begin{example}\label{exa:3}
In \eqref{BVP} we impose the transverse loading $f(x,y) = 2\pi^2[1-\cos(2\pi x)\cos(2\pi y)]$ and choose $c(x,y) = 1$;
this numerical example was tested in~\cite[Example 5.3]{Xie13} and \cite[Example 5.3]{HanHuangNMPDE12}.
Its exact solution is unknown.
\end{example}

Numerical results for Example~\ref{exa:2} and \ref{exa:3} are shown in Table~\ref{Table:numerresust2} and \ref{Table:numerresust3}.
As the exact solution is unknown, we use the double mesh principle \cite[Chapter 8]{FHMOSbook} to estimate the errors and rates of convergence.
To be specific, in the table $\|u-u_N\|_{\eps,N}$ is replaced by $\|\tilde{u}_{2N}-u_N\|_{\eps,N}$, where $\tilde{u}_{2N}$ is the solution computed by the same method on a Shishkin-type mesh that is constructed by bisecting each element $K\in\mathcal{T}_N$ in the $x$ and $y$ directions, i.e., dividing each $K$ into 4 rectangles of equal size. Similarly, in the above formula for estimating the numerical rate $r$ of convergence, we replace $u-u_M$ by $\tilde u_{2M}-u_M$ for $M=N,2N$.


\begin{table}[htbp]
\centering
\begin{tabular}{|c|c|c|c|c|}
\hline
$\eps$    & $N$ = 16    & $N$ = 32       & $N$ = 64        & $N$ = 128         \\\hline
1.0e-00    & 1.45e-02 & 7.15e-03  & 3.54e-03    & 1.76e-03       \\\hline
    &1.02  &1.01   &1.00     &        \\\hline
1.0e-01    & 6.17e-02 & 3.13e-02  & 1.55e-02    & 7.67e-03       \\\hline
    &0.97  &1.01   &1.01     &        \\\hline
1.0e-02    & 3.94e-02 & 1.71e-02  & 8.66e-03    & 4.75e-03      \\\hline
    &1.20  &0.98   &0.86    &        \\\hline
1.0e-03    & 3.05e-02 & 1.01e-02  & 3.99e-03    & 1.81e-03      \\\hline
    &1.58  &1.35   &1.13    &        \\\hline
1.0e-04    & 2.69e-02 & 7.61e-03  & 2.36e-03    & 8.53e-04      \\\hline
    &1.82  &1.68   &1.47     &        \\\hline
1.0e-05    & 2.57e-02 & 6.78e-03  & 1.84e-03    & 5.45e-04      \\\hline
    &1.92  &1.87   &1.76     &        \\\hline
1.0e-06    & 2.53e-02 & 6.52e-03  & 1.68e-03     & 4.47e-04       \\\hline
    &1.95  &1.95   &1.91     &        \\\hline
1.0e-07    & 2.52e-02 & 6.44e-03  & 1.63e-03     & 4.16e-04       \\\hline
    &1.96  &1.98   &1.96     &        \\\hline
1.0e-08    & 2.51e-02 & 6.41e-03  & 1.61e-03     & 4.07e-04       \\\hline
    &1.97  &1.98   &1.98     &        \\\hline
\end{tabular}
\caption{Numerical results for Example~\ref{exa:2}}
\label{Table:numerresust2}
\end{table}

Tables~\ref{Table:numerresust1},~\ref{Table:numerresust2} and~\ref{Table:numerresust3} seem to
indicate that the errors in the computed solutions obey the bound
\[
\left\|u-u_N\right\|_{\eps,N}\le C \left(\eps^{1/2}N^{-1}+\eps N^{-1}\ln N + N^{-2} \right)
\]
of equation \eqref{sharper}. Thus Theorem~\ref{thm:cgce} above appears not to be sharp;
this weakness in the analysis was discussed in Remark~\ref{rem:sharper}.

\begin{table}[htbp]
\centering
\begin{tabular}{|c|c|c|c|c|}
\hline
$\eps$    & $N$ = 16    & $N$ = 32       & $N$ = 64        & $N$ = 128         \\\hline
1.0e-00    & 1.22e-02 & 6.01e-03  & 2.98e-03    & 1.48e-03       \\\hline
    &1.02  &1.01   &1.00     &        \\\hline
1.0e-01    & 1.00e-01 & 4.65e-02  & 2.21e-02    & 1.07e-02       \\\hline
    &1.11  &1.07   &1.03     &        \\\hline
1.0e-02    & 1.59e-01 & 4.48e-02  & 1.69e-02    & 7.95e-03      \\\hline
    &1.82  &1.40   &1.08    &        \\\hline
1.0e-03    & 1.82e-01 & 5.08e-02  & 1.50e-02    & 4.96e-03      \\\hline
    &1.84  &1.76   &1.59    &        \\\hline
1.0e-04    & 1.79-01 & 4.77e-02  & 1.27e-02    & 3.59e-03      \\\hline
    &1.90  &1.90   &1.82     &        \\\hline
1.0e-05    & 1.77e-01 & 4.65e-02  & 1.20e-02    & 3.13e-03      \\\hline
    &1.93  &1.95   &1.93     &        \\\hline
1.0e-06    & 1.76e-01 & 4.61e-02  & 1.17e-02     & 2.99e-03       \\\hline
    &1.93  &1.97   &1.97     &        \\\hline
1.0e-07    & 1.76e-01 & 4.60e-02  & 1.16e-02     & 2.94e-03       \\\hline
    &1.93  &1.97   &1.98     &        \\\hline
1.0e-08    & 1.76e-01 & 4.60e-02  & 1.16e-02     & 2.93e-03       \\\hline
    &1.94  &1.97   &1.99     &        \\\hline
\end{tabular}
\caption{Numerical results for Example~\ref{exa:3}}
\label{Table:numerresust3}
\end{table}

\bibliographystyle{plain}
\bibliography{Morley}

\end{document}